\documentclass[11pt]{amsart}%
\usepackage{amsfonts}
\usepackage{amssymb}
\usepackage{amsmath}
\usepackage{graphicx}%
\setcounter{MaxMatrixCols}{30}
\providecommand{\U}[1]{\protect\rule{.1in}{.1in}}
\setlength{\baselineskip}{6mm}

\newtheorem{proposition}{Proposition}[section]
\newtheorem{theorem}{Theorem}[section]

\newtheorem{lemma}{Lemma}[section]

\newtheorem{remark}{Remark}[section]

\numberwithin{equation}{section}

\newcommand{\abs}[1]{\lvert#1\rvert}
\newcommand{\A}{\mathbb{A}}
\newcommand{\R}{\mathbb{R}}
\newcommand{\C}{\mathbb{C}}
\newcommand{\Z}{\mathbb{Z}}

\newcommand{\Q}{\mathbb{Q}}

\newcommand{\cir}{\mathbb{S}^1}
\newcommand{\distr}{\mathcal{D}'}
\newcommand{\ov}{\overline}
\newcommand{\ep}{\epsilon}
\newcommand{\dis}{\displaystyle}
\newcommand{\pa}{\partial}
\newcommand{\ei}[1]{\textrm{e}^{#1}}
\newcommand{\dd}[2]{\dis\frac{\pa #1}{\pa #2}}
\newcommand{\ta}{\theta}
\newcommand{\px}{\partial_x}
\newcommand{\py}{\partial_y}
\newcommand{\pr}{\partial_r}
\newcommand{\pt}{\partial_\theta}
\newcommand{\condp}{\mathrm{condition}\,(\mathcal{P})}
\newcommand{\re}[1]{\mathrm{Re}(#1)}
\newcommand{\Si}{\Sigma}
\newcommand{\Om}{\Omega}

\newcommand{\lam}{\lambda}

\newcommand{\baks}{\backslash}
\newcommand{\mcald}{\mathcal{D}'}
\newcommand{\mcaldc}{(\mathcal{DC})}
\newcommand{\mcaldcc}{(\mathcal{DC}')}

\newcommand{\ccinf}{C^\infty_{c}}
\newcommand{\cinf}{C^\infty}
\def\supp{\hbox{\rm supp\,}}
\def\<{\langle}
\def\>{\rangle}

\newcommand{\begeq}{\begin{equation}}
\newcommand{\stopeq}{\end{equation}}

\newcommand{\begar}{\begin{array}}
\newcommand{\stopar}{\end{array}}

\begin{document}
\setcounter{page}{1}

\author{Abdelhamid Meziani}
\address{\small Department of Mathematics, Florida International
University, Miami, FL, 33199, USA e-mail: meziani@fiu.edu}
\title[ Homogeneous singularities ]{Solvability of planar
complex vector fields with  homogeneous singularities}


\begin{abstract} In this paper we study the equation $Lu=f$, where $L$ is a $\C$-valued vector field in $\R^2$ with
a homogeneous singularity. The properties of the solutions are linked to the number theoretic properties of a pair
of complex numbers attached to the vector field.
\end{abstract}

\subjclass[2010]{Primary 35A01, 35F05; Secondary 35F15}
\maketitle

\section*{Introduction}
This paper deals with the solvability of homogeneous complex vector fields in the plane. It is motivated
by the recent papers {\cite{Tre1}}, {\cite{Tre2}}, and {\cite{Tre3}} by Fran\c{c}ois Treves about vector
fields with linear sigularities.
Let $L=A\px+B\py$ be a complex vector field in $\R^2$, where $A,\, B\,\in \cinf(\R^2\baks\{0\})$
are homogeneous with degree $\lam\in\C$ with $\mathrm{Re}(\lam)>1$.
The main question addressed here is the solvability, in the sense of distributions, of the equation
\begeq Lu=f\stopeq
in a domain $\Om$ containing the singular point 0.
The question is answered for a class of vector fields. This class consists of those vector fields
$L$ that are linearly independent of the radial vector field ($L\wedge (x\px+y\py)\ne 0$)
  and such that the characteristic set
$\Si$ has an empty interior in  $\R^2\baks\{0\}$ (see section 1).

An important role is played by the number theoretic properties of the pair of complex numbers
$(\mu,\lam)$ attached to $L$. The second number, $\lam$, is the homogeneity degree of $L$ and the
first is given by
\begeq
\mu = \frac{1}{2\pi}\int_{\abs{z}=1}\!\!
\frac{A(x,y)x+B(x,y)y}{B(x,y)x-A(x,y)y}\,\frac{dz}{z}\, ,
\stopeq
with $z=x+iy$. For instance, if $\mu =i\beta$ (\ $\beta\in\R^\ast$), then whenever $u\in C^0(\Om)$
solves $Lu=0$ in a region containing an annulus $r^2\le x^2+y^2\le R^2$ with $R>r\ei{2\pi\abs{\beta}}$,
the function $u$ extends as a distribution solution to the whole plane $\R^2$.
The solvability of (0.1), when $f$ is real analytic at $0$, can be achieved provided that the pair
of numbers $(\mu,\lam)$ is nonresonant and satisfies a certain Diophantine condition $\mcaldc$
which prevents the formation of small denominators (see section 6).
When $f$ is only $\cinf$ at 0, the Diophantine condition together with $L$ satisfying the Nirenber-Treves
$\condp$ in $\R^2\baks\{0\}$ imply the solvability of (0.1) in a neighborhood of 0.
These results are used to solve a Riemann-Hilbert boundary value problem for $L$.

The paper is organized as follows. Section 1 deals with the preliminaries. In section 2, we
construct first integrals for $L$. The properties of the first integrals depend on the number $\mu$.
Section 3 deals with the equation $Lu=0$ in a region $\Om$ containing the singular point 0.
We consider in particular the extendability of $u$ across the boundary $\pa\Om$ and the smoothness of $u$
on certain characteristic rays of $\Si$. Examples in section 4 illustrate the properties discussed
in section 3. In section 5,
we consider equation (0.1) when $f$ is a homogeneous function with degree $\sigma\in\C$,
with $\mathrm{Re}(\sigma)>0$. We show that (0.1) has a homogeneous distribution $u\in\mcald(\R^2)$
provided that the three numbers $\mu,\ \lam,$ and $\sigma$ satisfy
$\mu(\lam-\sigma-1)\not\in\Z$.
In section 6, we introduce a Diophantine condition $\mcaldc$ for a nonresonant pair $(\mu,\lam)$
and prove that when the pair of numbers satisfies the Diophantine condition,
equation (0.1) has a distribution solution in a neighborhood of $0$ for every
real analytic function $f$.
In section 7, we consider the case when $f$ is only $\cinf$ at 0 but we add the assumption that
$L$ satisfies $\condp$ in $\R^2\baks\{0\}$  and $(\mu,\lam)$ satisfies
$\mcaldc$. In this case equation (0.1) has a distribution solution.
The result of section 7 is used in section 8 to the boundary value problem
\begeq\left\{\begar{ll}
Lu =f & \quad\mathrm{in}\ \Om\, ,\\
\mathrm{Re}(\Lambda u)=\Phi &\quad\mathrm{on}\ \pa\Om\, ,
\stopar\right.\stopeq
 where $\Om\ni 0$ is a simply connected domain, $\Lambda\in C^\sigma(\pa\Om,\cir)$,
 $\Phi\in C^\sigma(\pa\Om,\R)$ with $0<\sigma <\, 1$. We prove that problem (0.3)
 has a distribution solution provided that the index $\kappa$ of $\Lambda$ satisfies
 the inequality
 \[
 \kappa \,\ge\, -1-\frac{\mathrm{Re}(\lam)-1}{\mathrm{Re}(1/\mu)} \, .
 \]

\section{Preliminaries}
In this section we provide the necessary terminology and background regarding the vector fields we will be dealing
with. Let
\begeq
L=A(x,y)\px +B(x,y)\py
\stopeq
be a vector field in $\R^2$, where $A,B\in \cinf (\R^2\baks \{0\}\, ,\C)$ are homogeneous functions of
degree $\lam\in \C$ with $\re{\lam} >1$. This means that, for every $(x,y)\in\R^2$ and for every $t\in\R^+$,
\[
A(tx,ty)=t^\lam A(x,y)\quad\mathrm{and}\quad B(tx,ty)=t^\lam B(x,y)\, .
\]
It follows in particular (when $\re{\lam}>1$) that $A$ and $B$ are at least of class $C^1$ at
$0$ and that
\[
\dd{^{j+k}A}{x^j\pa y^k}(0,0)=\dd{^{j+k}B}{x^j\pa y^k}(0,0)=0\quad\mathrm{when}\ j+k\le [\re{\lam}]\, ,
\]
where $[ a]$,  $a\in\R$, denotes the greatest integer $\le\, a$. The conjugate of $L$ is the vector field
\[
\ov{L}=\ov{A(x,y)}\,\px +\ov{B(x,y)}\,\py\, ,
\]
where $\ov{A}$ and $\ov{B}$ are the complex conjugates of $A$ and $B$. The polar coordinates are the natural
variables for the study of such operators and we will use them extensively. Consider the polar coordinates
map
\[
\Pi :\, [0,\ \infty)\times \cir \, \longrightarrow \, \R^2\baks\{0\}\, ,\ \
\Pi (r,\ta ) = (r\cos\ta ,r\sin\ta)\, .
\]
It follows from the $\lam$-homogeneity of $A$ and $B$ that
\[
A\circ\Pi (r,\ta)=r^\lam a(\ta)\quad B\circ\Pi (r,\ta)=r^\lam b(\ta)
\]
where $a(\ta)=A(\cos\ta,\sin\ta)$ and $b(\ta)=B(\cos\ta,\sin\ta)$ are $2\pi$-periodic and
can be considered as functions in $\cinf(\cir,\C)$.
The expression of $L$ in polar coordinates is
\begeq
L=r^{\lam-1}\left(p(\ta)\pt-iq(\ta)r\pr\right)\, ,
\stopeq
with
\begeq
p(\ta)=b(\ta)\cos\ta -a(\ta)\sin\ta \ \ \mathrm{and}\ \
q(\ta)=i\left(a(\ta)\cos\ta +b(\ta)\sin\ta\right)\, .
\stopeq
The vector field $L$ is elliptic at a point $p$ if $L_p\wedge\ov{L}_p \ne 0$.
The set of points where $L$ is nonelliptic, given by
\begeq
\Si =\left\{ (x,y)\in\R^2;\ L\wedge\ov{L}=0
\right\}=\left\{ (x,y)\in\R^2;\ \mathrm{Im}(A\ov{B})=0\right\}\, ,
\stopeq
is the base projection of the characteristic set of $L$.
It follows from the homogeneity of $A$ and $B$ that $\Si$ is a union of lines through
the origin.

Throughout this paper, we will assume that $L$ satisfies the following conditions
\begin{itemize}
\item[] $(\mathcal{C}1)$: $\ \Si$ has an empty interior in $\R^2$;
\item[] $(\mathcal{C}2)$: $\ L\wedge \left(x\px+y\py\right)\ne 0$ everywhere in $\R^2\baks\{0\}$.
\end{itemize}
Note that since
\begeq
\mathrm{Im}(A\ov{B})=r^{2\re\lam}\mathrm{Im}(a(\ta)\ov{b(\ta)})=r^{2\re\lam}\re{p(\ta)\ov{q(\ta)}}\, ,
\stopeq
then when $\Si$ is viewed as a subset in $[0,\ \infty)\times \cir$, we have
\begeq
\Si =[0,\ \infty)\times\Si_0\,\ \ \mathrm{with}\ \ \Si_0=\{\ta\in\cir;\ \re{p\ov{q}}=0\}\, .
\stopeq
The following lemma is a direct consequence of (1.2), (1.6) and of $x\px+y\py=r\pr$.
\begin{lemma}
Condition $(\mathcal{C}1)$ is equivalent to $(\mathcal{C}1')$ and to $(\mathcal{C}1'')$.
\begin{itemize}
\item[] $(\mathcal{C}1')$ The set $\{ \ta\in\cir\, ;\ \mathrm{Im}(a\ov{b})=0\}$ has an
empty interior in $\cir$.
\item[] $(\mathcal{C}1'')$ The set $\{ \ta\in\cir\, ;\ \re{q\ov{p}}=0\}$ has an
empty interior in $\cir$.
\end{itemize}
Condition $(\mathcal{C}2)$ is equivalent to $(\mathcal{C}2\,')$ and to $(\mathcal{C}2\,'')$.
\begin{itemize}
\item[] $(\mathcal{C}2\,')$ For every $\ta\in\cir$, $\ b(\ta)\cos\ta -a(\ta)\sin\ta \ne 0$.
\item[] $(\mathcal{C}2\,'')$ For every $\ta\in\cir$, $p(\ta)\ne 0$.
\end{itemize}
\end{lemma}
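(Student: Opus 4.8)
The plan is to reduce both equivalences to statements on the circle $\cir$, exploiting that both $\Si$ and the coefficient of the wedge $L\wedge(x\px+y\py)$ are cones, i.e. invariant under the dilations $(x,y)\mapsto(tx,ty)$. The one structural fact I would isolate first is that a set of the form $\Si=[0,\infty)\times\Si_0$ in polar coordinates has empty interior in $\R^2$ if and only if $\Si_0$ has empty interior in $\cir$. This follows because $\Pi$ restricts to a diffeomorphism of $(0,\infty)\times\cir$ onto $\R^2\baks\{0\}$: an open set inside $\Si$ would pull back to an open set inside $(0,\infty)\times\Si_0$, forcing an open arc into $\Si_0$, while conversely an open arc $I\subset\Si_0$ produces the open sector $\Pi((0,\infty)\times I)\subset\Si$. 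The origin contributes no interior, so I may work on $r>0$ throughout.

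For the first chain, I would start from the polar description (1.7), namely $\Si=[0,\infty)\times\Si_0$ with $\Si_0=\{\ta\in\cir;\ \re{p\ov q}=0\}$, which itself rests on identity (1.6). The structural fact then gives $(\mathcal{C}1)\iff\Si_0$ has empty interior, and it remains only to rewrite the defining condition of $\Si_0$. Since the real part is conjugation-invariant, $\re{p\ov q}=\re{q\ov p}$, so $\{\re{p\ov q}=0\}=\{\re{q\ov p}=0\}$, which is $(\mathcal{C}1'')$; and the middle equality in (1.6), $\mathrm{Im}(a\ov b)=\re{p\ov q}$, shows $\Si_0=\{\mathrm{Im}(a\ov b)=0\}$, which is $(\mathcal{C}1')$. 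Thus $(\mathcal{C}1)$, $(\mathcal{C}1')$, and $(\mathcal{C}1'')$ all assert the empty interior of one and the same subset of $\cir$.

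For the second chain I would compute the wedge directly: $L\wedge(x\px+y\py)=(Ay-Bx)\,\px\wedge\py$, so $(\mathcal{C}2)$ amounts to $Bx-Ay\ne0$ on $\R^2\baks\{0\}$. Passing to polar coordinates with $x=r\cos\ta$, $y=r\sin\ta$ and using $\lam$-homogeneity gives $Bx-Ay=r^{\lam+1}\bigl(b(\ta)\cos\ta-a(\ta)\sin\ta\bigr)=r^{\lam+1}p(\ta)$. Since $r^{\lam+1}\ne0$ for $r>0$, nonvanishing of $L\wedge(x\px+y\py)$ is equivalent to $b(\ta)\cos\ta-a(\ta)\sin\ta\ne0$ for every $\ta$, which is $(\mathcal{C}2')$, and this is literally $p(\ta)\ne0$ by the definition (1.3), i.e. $(\mathcal{C}2'')$. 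I expect no real obstacle in the algebra; the only point demanding a word of care is the topological reduction in the first paragraph, where one must verify that a cone's interior is detected by its trace on the circle and is not produced spuriously at the origin.
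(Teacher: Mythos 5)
Your proof is correct and follows the same route the paper intends: the paper offers no written proof, stating only that the lemma is a direct consequence of (1.2), (1.6), and $x\px+y\py=r\pr$, and your argument fleshes out exactly those identities (the polar expression of $L$, the identity $\mathrm{Im}(a\ov{b})=\re{p\ov{q}}=\re{q\ov{p}}$, and the wedge computation giving $r^{\lam+1}p(\ta)$), together with the implicit topological fact that a cone $[0,\infty)\times\Si_0$ has empty interior in $\R^2$ precisely when $\Si_0$ has empty interior in $\cir$. One cosmetic slip: the identities you cite as (1.6) and (1.7) are, in the paper's numbering, (1.5) and (1.6) respectively.
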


The vector field $L$ is said to be locally solvable at a point $p\in\R^2$ if there exist open sets
$U, V$ with $p\in V\subset U\subset\R^2$ such that for every $f\in\cinf(U)$ there exists a distribution
$u\in\mcald (V)$ such that $Lu=f$. Note that since $L$ is elliptic in $\R^2\baks\Si$, then it is
locally solvable and hypoelliptic at each point $p\notin\Si$. The vector field is solvable at a point
$p_0\in\Si\baks\{0\}$ if and only if $L$ satisfies the Nirenberg-Treves $\condp$. This condition deals with
general differential operators (see {\cite{Tre-Boo}}, {\cite{Nir-Tre}}). In the case of vector fields we
consider here, it has a simple formulation. Namely:
\emph{ The homogeneous vector field $L$ satisfies $\condp$ at the point
$(x_0,y_0)=(r_0,\ta_0)\in\Si\baks\{0\}$  if and only if the function $\mathrm{Im}(A\ov{B})$ does not change
sign in a neighborhood of $(x_0,y_0)$. Equivalently, if and only if $\mathrm{Im}(a\ov{b})=\re{q\ov{p}}$ does not
change sign in a neighborhood of $\ta_0\in\cir$.}
It follows from the homogeneity of $L$ that if it satisfies $\condp$ at a point $p_0$, then it satisfies
$\condp$ on the ray $\{ tp_0\in\C,\ t>0\}$.
It should be noted that when $L$ satisfies $\condp$ at a point $p_0\in\Si\baks\{0\}$, then for $f\in\cinf$
near $p_0$, a $\cinf$ function $u$ can be found so that $Lu=f$ near $p_0$.

\begin{remark}
{\rm{Condition $(\mathcal{C}2)$ implies that $L$ is not tangent to any ray contained in $\Si$.
In terms of (Sussmann's) orbits
of $L$ (see {\cite{Ber-Cor-Hou}} and {\cite{Tre-Boo}}), this condition implies that $L$ does not have one-dimensional
orbits in $\R^2\baks\{0\}$. As a consequence, if $L$ satisfies conditions $(\mathcal{C}1)$, $(\mathcal{C}2)$ and
$\condp$ at a point $p_0\in\R^2\baks\{0\}$,
then it is hypoelliptic in a neighborhood of $p_0$ (or equivalently, with the terminology
of hypoanalytic structures ({\cite{Ber-Cor-Hou}}, {\cite{Tre-Boo}}) it is hypocomplex in a neighborhood of $p_0$).
In this case  all solutions of $Lu=f$ are $\cinf$ at $p_0$ (respectively, real-analytic if $L$ is real
analytic in $\R^2\baks\{0\}$.)}}
\end{remark}

\begin{remark}{\rm
If $p_0=(r_0,\ta_0)\in\Si\baks\{0\}$ and if the function $\re{q\ov{p}}$ vanishes to a finite order at $\ta_0$,
then $L$ is of finite type at $p_0$. This means that the Lie algebra generated by $L$ and $\ov{L}$ generates the
complexified tangent space $\C T_{p_0}\R^2$. If, in addition, $L$ satisfies $\condp$ at $p_0$, then the
order of vanishing of $\re{q\ov{p}}$ is even ($\re{q\ov{p}}(\ta)=O((\ta-\ta_0)^{2k})$ for some $k\in\Z^+$ as
$\ta\, \longrightarrow\, \ta_0$). Otherwise, $L$ is of infinite type at $p_0$. }
\end{remark}

\begin{remark}
{\rm When $L$ is real analytic in $\R^2$, then the functions $A$ and $B$ are homogeneous polynomials
of degree $N+1\ge 2$ so that
\[
A(x,y)=\sum_{j=0}^{N+1}A_jx^{N+1-j}y^j\quad\mathrm{and}\quad B(x,y)=\sum_{j=0}^{N+1}B_jx^{N+1-j}y^j
\]
with $A_j\, ,\, B_j\,\in\C$. Then, we can express $L$ in polar coordinates as
\begeq
L=r^N\left(p_{N+2}(\ta)\pt-iq_{N+2}(\ta)r\pr\right)\, ,
\stopeq
where $p_{N+2}$ and $q_{N+2}$ are trigonometric polynomials of degrees $\le N+2$, i.e. polynomials
of the form
\[
\sum_{j=0}^{N+1}c_j\ei{i(N+2-2j)\ta}
\]
with $c_j\in\C$. It follows, in particular, that the characteristic set $\Si$ consists of at most
$2N+2$ lines through the origin.}
\end{remark}

\section{First integrals}
We give here the properties of the first integrals of $L$. By a first integral in a domain $\Om$,
we mean a function $F$ such that $LF=0$ and $dF$ is nowhere vanishing in $\Om$.
Throughout this paper we will assume that $L$ is $\cinf$  in $R^2\baks\{0\}$,
that it is $\lam$-homogeneous, and it satisfies
conditions $(\mathcal{C}1)$ and $(\mathcal{C}2)$. With such a vector field,
given in polar coordinates by (1.2),  we associate the complex number
\begeq
\mu =\mu(L)=\frac{1}{2\pi}\int_0^{2\pi}\frac{q(\ta)}{p(\ta)}d\ta\, .
\stopeq
The number $\mu$ can also be expressed as
\[
\mu =\frac{1}{2\pi}\int_0^{2\pi}\frac{a(\ta)\cos\ta+b(\ta)\sin\ta}{b(\ta)\cos\ta-a(\ta)\sin\ta}d\ta
=\frac{1}{2\pi}\int_{|z|=1}\frac{A(x,y)x+B(x,y)y}{B(x,y)x-A(x,y)y}\,\frac{dz}{z}
\]
where $z=x+iy$.
The number $\mu$ is introduced in {\cite{Mez-JFA}} (see also {\cite{Cor-Gon}}) for a class of elliptic
vector fields that degenerate along a closed curve.
Note that if $\mathrm{Re}({\mu}(L))\le 0$, then a linear change of variables in $\R^2$
(for example $\widetilde{x}=-x,\ \widetilde{y}=y$) transforms $L$ into a new, homogeneous vector
field $\widetilde{L}$ with $\mathrm{Re}({\mu}(\widetilde{L}))\ge 0$. Hence, from now on we will assume
that
\begeq
\re{\mu}\ge 0\, .
\stopeq

\begin{lemma}
If $L$ satisfies $\condp$ in $\R^2\baks\{0\}$, then $\re{\mu} >0$.
\end{lemma}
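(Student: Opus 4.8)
The plan is to compute $\re{\mu}$ directly from its integral representation (2.1) and to read off its sign from condition $\condp$ together with $(\mathcal{C}1)$.

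First I would rewrite the integrand. Since $(\mathcal{C}2\,'')$ guarantees $p(\ta)\ne 0$ for every $\ta$, the ratio $q/p$ is smooth and bounded on $\cir$, and multiplying numerator and denominator by $\ov{p}$ gives
\[
\re{\frac{q(\ta)}{p(\ta)}}=\frac{\re{q(\ta)\ov{p(\ta)}}}{\abs{p(\ta)}^2}\, .
\]
Taking real parts in (2.1) then yields
\[
\re{\mu}=\frac{1}{2\pi}\int_0^{2\pi}\frac{\re{q(\ta)\ov{p(\ta)}}}{\abs{p(\ta)}^2}\,d\ta\, .
\]

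Next I would invoke $\condp$. By the formulation recorded after (1.5), $L$ satisfies $\condp$ in $\R^2\baks\{0\}$ precisely when $\mathrm{Im}(a\ov{b})=\re{q\ov{p}}$ does not change sign on $\cir$. Since $\abs{p}^2>0$, the integrand above inherits a fixed sign, so $\re{\mu}$ has the same sign as the constant sign of $\re{q\ov{p}}$. Because we have normalized so that $\re{\mu}\ge 0$ (see (2.2)), the only possibility is $\re{q\ov{p}}\ge 0$ throughout $\cir$: were $\re{q\ov{p}}\le 0$ and not identically zero, the displayed integral would be negative.

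Finally I would upgrade nonnegativity to strict positivity using $(\mathcal{C}1)$. By Lemma 1.1, $(\mathcal{C}1)$ is equivalent to $(\mathcal{C}1'')$, which states that $\{\ta\in\cir;\ \re{q\ov{p}}=0\}$ has empty interior in $\cir$. Thus the nonnegative continuous integrand is strictly positive on a dense open subset of $[0,2\pi]$, and the integral of a nonnegative continuous function that is not identically zero on any subinterval is strictly positive. Hence $\re{\mu}>0$. I expect the only delicate point to be the bookkeeping of the sign: one must first use the normalization (2.2) to exclude the nonpositive alternative permitted by $\condp$, and only then can the empty-interior condition $(\mathcal{C}1'')$ deliver the strict inequality.
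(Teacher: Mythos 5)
Your proof is correct and follows essentially the same route as the paper's: the same integral representation of $\re{\mu}$, with $\condp$ forcing a fixed sign of $\re{q\ov{p}}$, condition $(\mathcal{C}1)$ (via $(\mathcal{C}1'')$) ruling out $\re{q\ov{p}}\equiv 0$, and the normalization (2.2) pinning down which sign occurs. The paper merely compresses your last two steps into one — the integral is nonzero since the integrand has constant sign and is not identically zero, hence positive by (2.2) — whereas you first exclude the nonpositive alternative and then argue strict positivity; the ingredients are identical.
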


\begin{proof}
We have
\[
\re{\mu} =\frac{1}{2\pi}\int_0^{2\pi}\re{\frac{q(\ta)}{p(\ta)}}d\ta =
\frac{1}{2}\int_0^{2\pi}\frac{\re{q(\ta)\ov{p(\ta)}}}{|p(\ta)|^2}d\ta
\]
Since $\re{q\ov{p}}\not\equiv 0$ (condition $(\mathcal{C}1)$) and does not change sign ($\condp$),
then the last integral is nonzero. The conclusion follows from (2.2)
\end{proof}

Consider the case $\mu =0$. Then, the function
\begeq
m(\ta)=\int_0^\ta \frac{q(s)}{p(s)}\,ds
\stopeq
is $2\pi$-periodic and $\re{m(\ta)}\not\equiv 0$ (condition $(\mathcal{C}1)$).
Let $\ta_1,\,\ta_2\in [0,\ 2\pi)$ and $\sigma >0$ be such that
\begeq\begar{c}
\dis\re{m(\ta_1)}=\min_{0\le\ta\le 2\pi}\re{m(\ta)}\, ,\quad
\re{m(\ta_2)}=\max_{0\le\ta\le 2\pi}\re{m(\ta)}\\
 \dis \sigma=\frac{\pi}{ \re{m(\ta_2)}-\re{m(\ta_1)}}\, .
\stopar\stopeq
Let
\begeq
\phi(\ta)=\sigma m(\ta)-\sigma\re{m(\ta_1)}\, .
\stopeq
We have then
\begeq
\min_{0\le\ta\le 2\pi}\re{\phi(\ta)}=0\quad\mathrm{and}\quad
\max_{0\le\ta\le 2\pi}\re{\phi(\ta)}=\pi\, .
\stopeq
Define the homogeneous function in $\R^2$ by
\begeq
Z_0(r,\ta)=r^\sigma \ei{i\phi(\ta)}\, .
\stopeq

\begin{proposition}
Suppose that $\mu =0$. Let $Z_0$ be the function defined by $(2.7)$. Then
 $\ Z_0\in\cinf(\R^2\baks\{0\})$
is a first integral of $L$ in $\R^2\baks\{0\}$ and
it maps $\R^2$ onto the upper half plane $\C^+=\{z=x+iy\in\C ;\ y\ge 0\}$.
\end{proposition}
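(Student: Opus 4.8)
The plan is to verify the three assertions of the proposition in turn: that $Z_0$ is a well-defined smooth function on $\R^2\baks\{0\}$, that it is a first integral, and that its image is exactly $\C^+$. First I would address smoothness and periodicity. Since $p(\ta)\ne 0$ for all $\ta$ by condition $(\mathcal{C}2\,'')$, the integrand $q/p$ in (2.3) is $\cinf$ on $\cir$, so $m\in\cinf$. The hypothesis $\mu=0$ gives
\[
m(\ta+2\pi)-m(\ta)=\int_0^{2\pi}\frac{q(s)}{p(s)}\,ds=2\pi\mu=0\, ,
\]
so $m$, and hence the affine function $\phi$ of $m$, is $2\pi$-periodic. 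Thus $\ei{i\phi(\ta)}$ is a well-defined $\cinf$ function on $\cir$, and since $r^\sigma=(x^2+y^2)^{\sigma/2}$ is smooth away from the origin, $Z_0\in\cinf(\R^2\baks\{0\})$.

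Next I would compute $LZ_0$ directly from the polar expression (1.2). Using $\pt Z_0=i\phi'(\ta)Z_0$ and $r\pr Z_0=\sigma Z_0$, one gets
\[
LZ_0=r^{\lam-1}\,iZ_0\,\bigl(p(\ta)\phi'(\ta)-\sigma q(\ta)\bigr)\, .
\]
Since $\phi'(\ta)=\sigma m'(\ta)=\sigma q(\ta)/p(\ta)$, the bracket vanishes identically, so $LZ_0=0$. For the nonvanishing of $dZ_0$, I would observe that $\pr Z_0=\sigma r^{\sigma-1}\ei{i\phi(\ta)}$ never vanishes, because $\sigma>0$, $r>0$, and the exponential is nowhere zero; hence $dZ_0\ne 0$ throughout $\R^2\baks\{0\}$. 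This establishes that $Z_0$ is a first integral.

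The surjectivity onto $\C^+$ carries the real content and is where I expect the main care to be needed, since $\phi$ is complex-valued. Writing $\phi=\re{\phi}+i\,\mathrm{Im}(\phi)$ yields
\[
\ei{i\phi(\ta)}=\ei{-\mathrm{Im}(\phi(\ta))}\,\ei{i\re{\phi(\ta)}}\, ,
\]
so $Z_0$ has modulus $r^\sigma\ei{-\mathrm{Im}(\phi(\ta))}$ and argument $\re{\phi(\ta)}$. By the normalization (2.6), $\re{\phi}$ attains minimum $0$ and maximum $\pi$ on the connected set $\cir$, so by the intermediate value theorem it takes every value in $[0,\pi]$. Given $w\in\C^+\baks\{0\}$, I would write $w=\rho\,\ei{i\psi}$ with $\rho>0$ and $\psi\in[0,\pi]$, choose $\ta$ with $\re{\phi(\ta)}=\psi$, and then set $r=\bigl(\rho\,\ei{\mathrm{Im}(\phi(\ta))}\bigr)^{1/\sigma}>0$, which gives $Z_0(r,\ta)=w$. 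Since $Z_0(0)=0\in\C^+$, the image is exactly $\C^+$.

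The main obstacle is bookkeeping rather than depth: one must separate the modulus factor $\ei{-\mathrm{Im}(\phi)}$ from the argument factor $\ei{i\re{\phi}}$ correctly, and then use the normalization (2.6) to guarantee that the argument sweeps precisely the interval $[0,\pi]$ while the modulus, through the free choice of $r>0$, sweeps all of $(0,\infty)$. Together these force the image to be the full closed upper half-plane, which is the only nonroutine point in the argument.
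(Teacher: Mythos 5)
Your proof is correct and follows essentially the same route as the paper: the identity $\phi'=\sigma q/p$ for $LZ_0=0$, the decomposition $\ei{i\phi}=\ei{-\phi_2}\ei{i\phi_1}$ with the normalization (2.6) pinning the argument to $[0,\pi]$, and the $\sigma$-homogeneity (your explicit choice of $r$) to sweep the moduli. You simply make explicit several steps the paper leaves terse, namely the periodicity of $\phi$, the direct computation of $LZ_0$, and the intermediate value argument.
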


\begin{proof} The fact that
 $Z_0\in\cinf(\R^2\baks\{0\})$ and solves $LZ_0=0$ follows from the construction of
the function $\phi(\ta)$ which satisfies $\phi'(\ta)=\sigma\dis\frac{q(\ta)}{p(\ta)}$.
That $dZ_0\ne 0$ in $\R^2\baks\{0\}$ is also clear. We need to verify that
$Z_0(\R^2)=\C^+$. Let $\phi_1$ and $\phi_2$ be, respectively, the real and imaginary parts of $\phi$.
The function $\ei{i\phi}=\ei{-\phi_2}\ei{i\phi_1}$ maps the unit circle in $\R^2$ onto
a curve contained in $\C^+$ (since $0\le \phi_1(\ta)\le\pi$ for every $\ta$, by definition of $\phi$).
If the maximum $\pi$ and minimum 0 of $\phi_1$ occurs at $\ta_2$ and $\ta_1$, then $\ei{i\phi(\ta_2)}\in\R^-$
and $\ei{i\phi(\ta_1)}\in\R^+$. The $\sigma$-homogeneity of $Z_0$ implies $Z_0(R^2)=C^+$.
 \end{proof}

 When $\mu\ne 0$, then, as it is the zeroth Fourier coefficient of the function $\dis\frac{q}{p}$,
 we can write
 \begeq
 \frac{q(\ta)}{p(\ta)}=\mu\left(1+\sum_{j\in\Z,\, j\ne 0}\frac{c_j}{\mu}\ei{ij\ta}\right)
 \stopeq
 with $\dis c_j=\frac{1}{2\pi}\int_0^{2\pi}\frac{q(\ta)}{p(\ta)}\ei{-ij\ta}d\ta$.
The sum appearing in (2.8) has a periodic primitive
\begeq
\phi(\ta)=\sum_{j\in\Z,\, j\ne 0}\frac{c_j}{ij\mu}\ei{ij\ta}=\phi_1(\ta)+i\phi_2(\ta)
\stopeq
with $\phi_1$ and $\phi_2$ the real and imaginary parts of $\phi$. Hence,
\begeq
\frac{q(\ta)}{p(\ta)}=\mu (1+\phi'(\ta))\, .
\stopeq
Let $Z_\mu$ be the function defined by
\begeq
Z_\mu (r,\ta)=r^{1/\mu}\exp\left(i(\ta+\phi(\ta))\right)\, .
\stopeq
The function $Z_\mu$ is $\cinf$ in $\R^2\baks\{0\}$. If $\re{\mu}>0$, then, at $0$,
$Z_\mu$ is of class $C^k$  with $k<\re{1/\mu}$
(hence, only H\"{o}lder continuous, if $0\le \re{1/\mu} <1$). If $\re{\mu}=0$, $Z_\mu$ is not defined
at $0$, but it is  bounded in the whole punctured plane $\R^2\baks\{0\}$.
Furthermore, it is easily verified that $LZ_\mu =0$ and $dZ_\mu \ne 0$ in $\R^2\baks\{0\}$.
That is, $Z_\mu$ is a first integral of $L$ in $\R^2\baks\{0\}$.

\begin{proposition}
Suppose that $\re{\mu} >0$. Then, the first integral $Z_\mu$ maps $\R^2$ onto $\C$. Furthermore, if
$L$ satisfies $\condp$ in $\R^2\baks\{0\}$, then $Z_\mu$ is a global homeomorphism.
\end{proposition}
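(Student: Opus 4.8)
The plan is to read off the modulus and argument of $Z_\mu$ in polar coordinates and to reduce the whole statement to the behaviour of one real function of $\ta$. Writing $1/\mu=\alpha+i\beta$ with $\alpha=\re{1/\mu}=\re{\mu}/\abs{\mu}^2>0$, and splitting $r^{1/\mu}=r^\alpha\ei{i\beta\ln r}$ together with $\ei{i(\ta+\phi)}=\ei{-\phi_2(\ta)}\ei{i(\ta+\phi_1(\ta))}$, I get
\[
\abs{Z_\mu(r,\ta)}=r^\alpha\ei{-\phi_2(\ta)},\qquad \arg Z_\mu(r,\ta)=\beta\ln r+\ta+\phi_1(\ta)\pmod{2\pi}.
\]
Since $\phi$ is $2\pi$-periodic, $Z_\mu$ is well defined on $\R^2\baks\{0\}$, and because $\alpha>0$ it extends continuously by $Z_\mu(0)=0$ with $\abs{Z_\mu}\to 0$ as $r\to 0$.

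For surjectivity onto $\C$ (using only $\re{\mu}>0$) I fix $w\ne 0$ with $\abs{w}=\rho$. For each $\ta$ the equation $r^\alpha\ei{-\phi_2(\ta)}=\rho$ has the unique solution $r(\ta)=(\rho\,\ei{\phi_2(\ta)})^{1/\alpha}$, since $\alpha>0$; substituting $\ln r(\ta)=\alpha^{-1}(\ln\rho+\phi_2(\ta))$ into the argument reduces the problem to finding $\ta$ with
\[
g(\ta):=\ta+\phi_1(\ta)+\frac{\beta}{\alpha}\phi_2(\ta)\equiv\arg w-\frac{\beta}{\alpha}\ln\rho\pmod{2\pi}.
\]
As $\phi_1,\phi_2$ are periodic, $g$ is continuous with $g(\ta+2\pi)=g(\ta)+2\pi$, hence $g\colon\R\to\R$ is surjective; choosing such a $\ta$ and the corresponding $r(\ta)$ yields a preimage of $w$, while the origin is the unique preimage of $0$.

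The heart of the matter is injectivity under $\condp$, and for this I compute $g'$. Using $(2.10)$ in the form $1+\phi'(\ta)=\mu^{-1}q(\ta)/p(\ta)$, so that $(\ta+\phi)'=(\alpha+i\beta)\,q/p$, a direct calculation with $q/p=u+iv$ gives
\[
g'(\ta)=\re{(\ta+\phi)'}+\frac{\beta}{\alpha}\,\mathrm{Im}\!\left((\ta+\phi)'\right)=\frac{\alpha^2+\beta^2}{\alpha}\,u(\ta)=\frac{1}{\alpha\abs{\mu}^2}\cdot\frac{\re{q(\ta)\ov{p(\ta)}}}{\abs{p(\ta)}^2}.
\]
Thus $g'$ has the sign of $\re{q\ov p}$. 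Under $\condp$ the function $\re{q\ov p}=\mathrm{Im}(a\ov b)$ does not change sign, and since $\int_0^{2\pi}\re{q\ov p}\abs{p}^{-2}\,d\ta=2\re{\mu}>0$ it must be $\ge 0$; by $(\mathcal{C}1'')$ its zero set has empty interior. Hence $g$ is nondecreasing and constant on no subinterval, therefore strictly increasing, and it descends to a homeomorphism of $\cir$.

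Injectivity then follows: if $Z_\mu(r_1,\ta_1)=Z_\mu(r_2,\ta_2)=w$, equal moduli force $r_j=r(\ta_j)$ and equal arguments give $g(\ta_1)\equiv g(\ta_2)\pmod{2\pi}$, whence $\ta_1=\ta_2$ in $\cir$ and then $r_1=r_2$; the origin is the only preimage of $0$. Consequently $Z_\mu$ is a continuous bijection of $\R^2$ onto $\C$, and since $\abs{Z_\mu}=r^\alpha\ei{-\phi_2}\to\infty$ as $r\to\infty$ it is also proper. By invariance of domain $Z_\mu$ is an open map, so its inverse is continuous and $Z_\mu$ is a global homeomorphism. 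The main obstacle is the sign computation for $g'$ and the passage from $g'\ge 0$ to strict monotonicity, which is precisely where $\condp$ and $(\mathcal{C}1)$ enter.
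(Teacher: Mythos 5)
Your proof is correct, and it takes a genuinely different route from the paper's --- in fact a more careful one. The paper works only on the unit circle: it asserts that $\condp$ forces $\re{1+\phi'(\ta)}=1+\phi_1'(\ta)\ge 0$, deduces via $(\mathcal{C}1)$ that $\ta+\phi_1(\ta)$ is strictly increasing, concludes that $\exp\left(i(\ta+\phi(\ta))\right)$ is injective on $\cir$, and then globalizes by the $(1/\mu)$-homogeneity. Your argument instead reduces everything to the single function $g(\ta)=\ta+\phi_1(\ta)+(\beta/\alpha)\phi_2(\ta)$ and the identity $g'=\re{q\ov{p}}/\left(\alpha\abs{\mu}^2\abs{p}^2\right)$, which I have checked and is right. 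This buys two things. First, when $\mathrm{Im}\,\mu\ne 0$ the homogeneity orbits $t\mapsto t^{1/\mu}w$ are logarithmic spirals, so injectivity of $Z_\mu$ on the unit circle does not by itself globalize; the correct statement is that the circle's image meets each spiral orbit exactly once, and that is precisely injectivity of $g$ modulo $2\pi$ --- your modulus/argument bookkeeping handles the twist $\beta\ln r$ that the paper's appeal to homogeneity leaves implicit. Second, the paper's intermediate claim is not in fact a consequence of $\condp$: taking $p\equiv 1$ and $q(\ta)=(1-\cos\ta)+i(10-11\cos\ta)$, conditions $(\mathcal{C}1)$, $(\mathcal{C}2)$, $\condp$ hold and $\mu=1+10i$, yet $1+\phi_1'(0)=\re{q(0)/\mu}=-10/101<0$, so $\ta+\phi_1(\ta)$ need not be monotone; what is monotone under $\condp$ is exactly your $g$ (the two quantities agree only when $\mu\in\R$, i.e.\ $\beta=0$). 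So your computation of $g'$ is not merely an alternative: it supplies the step that makes the proof of the proposition work in general. Two harmless slips: the integral identity should read $\int_0^{2\pi}\re{q\ov{p}}\,\abs{p}^{-2}\,d\ta=2\pi\re{\mu}$ (factor $2\pi$, not $2$), and properness is redundant once you invoke invariance of domain, since a continuous bijection of $\R^2$ onto $\R^2$ is then automatically open, hence a homeomorphism.
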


\begin{proof}
Since $\phi(\ta)$ is $2\pi$-periodic, then the map $\exp\left(i(\ta+\phi(\ta))\right)$ sends
the unit circle in $\R^2$ into a curve in $\C$ with winding number 1 about the origin.
The $(1/\mu)$-homogeneity of $Z_\mu$ and $\re{\mu}>0$ imply that $Z_\mu(\R^2)=\C$.
If, in addition, $L$ satisfies $\condp$, then  $\re{1+\phi'(\ta)}=1+\phi_1'(\ta)\ge 0$, for every
$\ta$. It follows from condition $(\mathcal{C}1)$, that the function
$\ta+\phi_1(\ta)$ is monotone (increasing). Consequently, $\exp\left(i(\ta+\phi(\ta))\right)$
is a homeomorphism from $\cir$ onto its image. The conclusion follows from the homogeneity of
$Z_\mu$.
\end{proof}

When $\re{\mu}=0$, set $\mu =i\beta$ with $\beta\in\R^\ast$. Then (2.11) can be rewritten as
\begeq
Z_{i\beta}=\ei{-\phi_2(\ta)}\exp\left[i\left(\ta+\phi_1(\ta)-\frac{\ln r}{\beta}\right)\right]\, .
\stopeq
Let
\begeq
m=\min_{0\le\ta\le 2\pi}\ei{-\phi_2(\ta)}\quad\mathrm{and}\quad
M=\max_{0\le\ta\le 2\pi}\ei{-\phi_2(\ta)}\, .
\stopeq
As before, $Z_{i\beta}$ is a first integral of $L$ in $\R^2\baks\{0\}$ and we have the following

\begin{proposition}
The function $Z_{i\beta}$ maps $\R^2\baks\{0\}$ onto the annulus
\begeq
\A (m,M)=\{ z\in\C;\ m\le\abs{z}\le M\}\, .
\stopeq
Furthermore, for every $\ep >0$, the function $Z_{i\beta}$ maps the punctured disc
$D(0,\ep)\baks \{0\}$ onto the annulus $\A(m,M)$.
\end{proposition}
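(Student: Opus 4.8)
The plan is to exploit the fact that the modulus of $Z_{i\beta}$ depends only on the angular variable while its argument is driven by $r$. From (2.12) one reads off $\abs{Z_{i\beta}(r,\ta)}=\ei{-\phi_2(\ta)}$, which is independent of $r$. Since $\phi_2$ is smooth and $2\pi$-periodic, the function $\ta\mapsto\ei{-\phi_2(\ta)}$ is continuous and strictly positive, so by (2.13) its range is exactly the interval $[m,M]$ with $0<m\le M<\infty$. This already shows $Z_{i\beta}(\R^2\baks\{0\})\subset\A(m,M)$ and pins down the correct target annulus; the real content is surjectivity.

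First I would show that every point of $\A(m,M)$ is attained. Fix $w=\rho\,\ei{i\psi}\in\A(m,M)$, so $\rho\in[m,M]$ and $\psi\in\R$. By the intermediate value theorem applied to the continuous function $\ei{-\phi_2(\ta)}$, whose extreme values are $m$ and $M$, there is some $\ta_0\in[0,2\pi)$ with $\ei{-\phi_2(\ta_0)}=\rho$; this fixes the modulus. It then remains to match the argument, i.e. to find $r>0$ with
\[
\ta_0+\phi_1(\ta_0)-\frac{\ln r}{\beta}\equiv\psi\pmod{2\pi}.
\]
Because $\beta\ne 0$, the map $r\mapsto -\frac{\ln r}{\beta}$ is a continuous bijection of $(0,\infty)$ onto $\R$; hence the left-hand side ranges over all of $\R$ as $r$ varies, and one may choose $r$ so that the congruence holds. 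Thus $Z_{i\beta}(r,\ta_0)=w$, proving $Z_{i\beta}(\R^2\baks\{0\})=\A(m,M)$.

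For the second statement I would observe that the freedom in $r$ persists after restricting to an arbitrarily small punctured disc. Keeping $\ta_0$ as above, the congruence for the argument determines $\ln r$ only up to integer multiples of $2\pi\beta$, namely $\ln r=\beta(\ta_0+\phi_1(\ta_0)-\psi+2\pi k)$ with $k\in\Z$. Since $\ln r\to -\infty$ as $k\to -\infty$ when $\beta>0$, and as $k\to +\infty$ when $\beta<0$, one can select $k$ so that the resulting $r$ is smaller than any prescribed $\ep>0$. Hence every $w\in\A(m,M)$ already lies in $Z_{i\beta}(D(0,\ep)\baks\{0\})$, which gives the claimed surjectivity from each punctured disc.

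The argument is essentially elementary, so the only point requiring care is the bookkeeping that separates the $r$-independent modulus from the $r$-dependent argument; the essential mechanism is that the logarithmic factor $-\ln r/\beta$ winds the image around the annulus infinitely often as $r\to 0$, which is precisely why no restriction on the size of the disc can shrink the image. I would note, in contrast to Proposition 2.2, that neither $\condp$ nor any monotonicity of the angular map is needed here: surjectivity onto the annulus holds for every $\beta\in\R^\ast$.
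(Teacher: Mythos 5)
Your proposal is correct and follows essentially the same route as the paper's proof: fix the modulus by choosing $\ta_0$ with $\ei{-\phi_2(\ta_0)}=\rho$, then solve for the argument via $\ln r=\beta\bigl(\ta_0+\phi_1(\ta_0)-\psi+2\pi k\bigr)$, selecting $k$ with $k\beta<0$ and $\abs{k}$ large to force $r<\ep$. The only cosmetic difference is that you state the intermediate value theorem and the modular arithmetic explicitly, which the paper leaves implicit.
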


\begin{proof}
Let $(r,\ta)\in \R^2$ ($r>0$). Then $\abs{Z_{i\beta}(r,\ta)}=\ei{-\phi_2(\ta)}$. Hence,
$m\le\abs{Z_{i\beta}}\le M$, and $Z_{i\beta}(r,\ta)\in\A(m,M)$. Now, let
$z_0=\rho_0\ei{i\phi_0}\in\A(m,M)$. Then there exists $\ta_0\in [0,\ 2\pi)$ such that
$\rho_0=\ei{-\phi_2(\ta_0)}$. For a given $\ep >0$, let
\[
r_\ep =\exp\left[\beta\left(\ta_0+\phi_1(\ta_0)-\phi_0+2k\pi\right)\right]
\]
with $k\in\Z$, $k\beta <0$ and $\abs{k}$ large enough so that $r_\ep <\ep$. Then
$Z_{i\beta}(r_\ep,\ta_0)=z_0$.
\end{proof}

\begin{remark}
{\rm When $\mathrm{div}(L)=0$, the differential form $Ady-Bdx$ (orthogonal to $L$)
is exact. Hence, there exists a $(\lam +1)$-homogeneous function $F(x,y)$  such that
\begeq
L=\dd{F}{y}\,\dd{}{x} -\dd{F}{x}\,\dd{}{y}\, .
\stopeq
If we set $F(x,y)=r^{\lam+1}f(\ta)$, then
\begeq
L=r^{\lam-1}\left[(\lam+1)f(\ta)\dd{}{\ta}-f'(\ta)r\dd{}{r}\right]\, .
\stopeq
Let $\arg f(\ta)$ be a continuous branch of the argument of $f$.
The number $\mu$ can be given by
\begeq
\mu =\frac{\arg f(2\pi)-\arg f(0)}{2\pi (\lam+1)}=\frac{j}{\lam+1}
\stopeq
for some $j\in\Z$. If, in addition, $L$ is real analytic at $0$, then $F=P_{N+1}$
is a homogeneous polynomial of degree $N+1\in Z^+$. So $F(r,\ta)=r^{N+1}P(\ta)$,
with $P(\ta)=\dis\sum_{k=-(N+1)}^{N+1}p_k\ei{ik\ta}$ a trigonometric polynomial of degree $N+1$.
Let $R(z)=\dis\sum_{k=-(N+1)}^{N+1}p_kz^k$. Then the associated number is
\[
\mu =\frac{1}{2\pi i(N+1)}\int_{\abs{z}=1}\frac{R'(z)}{R(z)}\, dz =\frac{j}{N+1}
\]
with $j=Q-(N+1)$, where $Q$ is the number of zeros of $R$ in the disc $\abs{z}<1$.
 }
\end{remark}

\begin{remark}
{\rm The functions $Z_\mu$ defined in (2.7), (2.11), and (2.12) satisfy $LZ_\mu =0$ in the
sense of distribution in the whole plane $\R^2$.}
\end{remark}

\section{The equation $Lu=0$}
In this sections we study the properties of the solutions of the
 equation
 \begeq
 Lu=0\,.
 \stopeq
 The following propositions (in which the equations are to be understood in the sense
 of distributions) are direct consequences of the order of
 vanishing of $L$ at $0$.

 \begin{proposition}
 Let $\delta$ be the Dirac distribution and $j,k$ be nonnegative integers such that $j+k\le \re{\lam}-1$.
 Then
 \begeq
 L\left(\dd{^{j+k}\delta}{x^j\pa y^k}\right)=0\, .
 \stopeq
 \end{proposition}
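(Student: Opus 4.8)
The plan is to test the distribution against an arbitrary $\varphi\in\ccinf(\R^2)$ and to reduce the statement to the flatness of $A$ and $B$ at the origin recorded in Section 1. Writing $T=\dd{^{j+k}\delta}{x^j\pa y^k}$ and using the formal transpose of $L$, one has
\[
\<LT,\varphi\>=-\,\<T,\,L\varphi+(\mathrm{div}\,L)\varphi\>=(-1)^{j+k+1}\frac{\pa^{j+k}}{\pa x^j\pa y^k}\big(L\varphi+(\mathrm{div}\,L)\varphi\big)(0,0),
\]
where $\mathrm{div}\,L=\px A+\py B$ and $L\varphi=A\px\varphi+B\py\varphi$. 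Expanding $A\px\varphi$, $B\py\varphi$, $(\px A)\varphi$ and $(\py B)\varphi$ by the Leibniz rule, every summand of the right-hand side is of the form $\partial^\beta A(0,0)\,\partial^\gamma\varphi(0,0)$ or $\partial^\beta B(0,0)\,\partial^\gamma\varphi(0,0)$, and the total number of derivatives falling on $A$ (respectively $B$) is at most $j+k+1$, the top value being attained only in the terms coming from $\mathrm{div}\,L$.

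The key step is then the bookkeeping of orders. Since $j+k$ is an integer with $j+k\le\re{\lam}-1$, we get $j+k\le[\re{\lam}]-1$, hence $\abs{\beta}\le j+k+1\le[\re{\lam}]$ for every factor produced above. By the vanishing of the low-order derivatives of $A$ and $B$ established in Section 1, namely $\partial^\beta A(0,0)=\partial^\beta B(0,0)=0$ whenever $\abs{\beta}\le[\re{\lam}]$, each term in the Leibniz expansion is zero, so $\<LT,\varphi\>=0$ for all $\varphi$ and therefore $LT=0$. No analytic, nonresonance, or small-denominator input enters: only the homogeneity-forced flatness of the coefficients at $0$ is used.

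The one point requiring care, and the step I expect to be the \textbf{main obstacle}, is the regularity needed to form these products. When $\abs{\beta}<\re{\lam}$ the derivative $\partial^\beta A$ is homogeneous of positive real degree and extends continuously by $0$ across the origin, so the expansion above is literally legitimate and the argument is complete. The genuinely borderline situation is $\re{\lam}=j+k+1\in\Z$ (so $j+k=\re{\lam}-1$ exactly): here $A,B$ are only of class $C^{j+k}$ at $0$, the top-order coefficients $\partial^\beta A$ with $\abs{\beta}=j+k+1$ are homogeneous of purely imaginary degree and have no classical value at $0$, and the pairing must be justified differently. I would dispatch this case by a scaling argument: $T$ is homogeneous of degree $-2-(j+k)$ and $L$ raises the homogeneity degree by $\lam-1$, so $LT$ is a distribution supported at $\{0\}$ and homogeneous of degree $\lam-(j+k)-3$. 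Writing $LT=\sum_\alpha c_\alpha\partial^\alpha\delta$ and matching degrees forces $\abs{\alpha}=(j+k)+1-\lam$, whose real part is $\le 0$ under the hypothesis $j+k\le\re{\lam}-1$; hence either no admissible $\alpha$ exists and $LT=0$ outright, or $\lam=j+k+1$ is a positive integer and $LT=c\,\delta$. In the latter case the coefficient $c$ depends holomorphically on $\lam$ through the top-order angular data of $A,B$ and, by the generic computation already carried out, vanishes identically for $\lam$ off the integers; analytic continuation in $\lam$ then gives $c=0$, completing the proof.
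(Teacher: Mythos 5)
Your transpose-plus-Leibniz computation is, in substance, the paper's own proof: the paper gives no argument beyond declaring Proposition 3.1 a direct consequence of the order of vanishing of $L$ at $0$, i.e.\ of the Section 1 statement that the derivatives of $A$ and $B$ of order $\le[\re{\lam}]$ vanish at the origin. When $j+k+1<\re{\lam}$ (in particular whenever $\re{\lam}\notin\Z$), your bookkeeping is complete and correct, and nothing more is needed.

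The genuine gap is in your handling of the borderline case $\re{\lam}=j+k+1\in\Z$, which you rightly single out but then resolve with an argument that cannot be salvaged. First, the analytic-continuation step is vacuous as written: no holomorphic family of vector fields is given, and along any natural interpolation (say $A_\lam=\abs{z}^{\lam-N}A$ for the given degree-$N$ coefficient $A$) the coefficient $c$ is not even continuous at the critical value of $\lam$, because the pointwise value at the origin of a homogeneous function jumps when its degree reaches $0$; moreover, when $\lam\notin\R$ the products $A\,\px T$ and $B\,\py T$ are not classically defined there, so your scaling argument presupposes exactly what is in doubt. Second, and decisively, the conclusion $c=0$ is false: in the borderline polynomial case the degree matching you set up yields
\[
L\left(\dd{^{j+k}\delta}{x^j\pa y^k}\right)=(-1)^{j+k+1}\,
\dd{^{j+k}(\mathrm{div}L)}{x^j\pa y^k}(0,0)\;\delta\, ,
\]
and conditions $(\mathcal{C}1)$, $(\mathcal{C}2)$ do not force this derivative of the divergence to vanish. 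Concretely, take $\lam=2$ and $L=z^2\px+icz^2\py$ with $z=x+iy$, $c\in\R^+$, $c\ne1$: then $\mathrm{Im}(A\ov{B})=-c\abs{z}^4$ and $Ay-Bx=z^2(y-icx)$, so $(\mathcal{C}1)$ and $(\mathcal{C}2)$ hold (indeed $L$ is elliptic off the origin), yet for $j=1$, $k=0$, which satisfies $j+k=\re{\lam}-1$, one gets $L(\px\delta)=\left(A_{xx}(0)+B_{xy}(0)\right)\delta=2(1-c)\,\delta\ne0$. So in the borderline case the proposition itself fails --- as does the Section 1 flatness claim it rests on, since a homogeneous polynomial of degree $N$ does not have vanishing $N$-th order derivatives at the origin. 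The statement is correct exactly under the strict inequality $j+k+1<\re{\lam}$, or with a supplementary hypothesis such as $\mathrm{div}L=0$. Your instinct that this case required separate treatment was sound; the patch you propose, however, proves a false statement, so the defect lies in the claim itself and not merely in your argument for it.
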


 \begin{proposition}Suppose that $\mu =0$. Let $Z_0$ be the first integral of $L$ given by $(2.7)$
 and let $m\in\Z$ such that $\, \sigma m<\re{\lam}-1$. Then
 \begeq
 L\left(Z_0^{-m}\right)=0\, .
 \stopeq
Suppose that $\re{\mu}>0$. Let $Z_\mu$ be the first integral of $L$ defined by $(2.11)$
and let $m\in\Z$ such that $\,\re{1/\mu} m <\re{\lam}-1$, then
\begeq
L\left(Z_\mu^{-m}\right) =0\, .
\stopeq
 Suppose that $\mu =i\beta\,\in\, i\R^\ast$. Let $Z_{i\beta}$ be the first integral of $L$
 given by $(2.12)$ and $m\in\Z$. Then
 \begeq
 L\left(Z_{i\beta}^m\right)=0\, .
 \stopeq
  \end{proposition}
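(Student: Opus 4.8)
The plan is to handle all three statements by one and the same two-step scheme, the only difference between the cases being the degree of homogeneity of the power of the first integral that occurs.

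\smallskip

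\noindent\emph{Step 1: reduce to a distribution supported at the origin.} On $\R^2\baks\{0\}$ each of $Z_0,\ Z_\mu,\ Z_{i\beta}$ is $\cinf$, nowhere zero (indeed $\abs{Z_0}=r^\sigma\ei{-\phi_2}$, $\abs{Z_\mu}=r^{\re{1/\mu}}$, and $m\le\abs{Z_{i\beta}}\le M$ with $m>0$), and is a classical first integral, $LZ=0$ (Section 2; see $(2.7)$, $(2.11)$, $(2.12)$ and Propositions 2.1--2.3). Hence, by the chain rule for the vector field $L$,
\[
L\left(Z_\mu^{-m}\right)=-m\,Z_\mu^{-m-1}\,LZ_\mu=0\qquad\text{on }\R^2\baks\{0\},
\]
and likewise $L(Z_0^{-m})=0$ and $L(Z_{i\beta}^{m})=0$ off the origin. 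Consequently each of these distributions is supported at the single point $0$, so it is a finite combination $\sum_{j,k}c_{jk}\,\pa_x^{\,j}\pa_y^{\,k}\delta$ of derivatives of the Dirac mass, and it remains only to show that every $c_{jk}$ vanishes.

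\smallskip

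\noindent\emph{Step 2: kill the $\delta$--terms by homogeneity.} By $(1.2)$, $L=r^{\lam-1}(p(\ta)\pt-iq(\ta)\,r\,\pr)$ sends a homogeneous function (or distribution) of degree $s$ to one of degree $s+\lam-1$. The powers in question are homogeneous: $Z_0^{-m}$ of degree $-m\sigma$, $Z_\mu^{-m}$ of degree $-m/\mu$, and $Z_{i\beta}^{m}$ of the purely imaginary degree $m/(i\beta)=-im/\beta$. Hence $L(Z_0^{-m})$, $L(Z_\mu^{-m})$, $L(Z_{i\beta}^{m})$ are homogeneous of degrees $\lam-1-m\sigma$, $\lam-1-m/\mu$, $\lam-1-im/\beta$. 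On the other hand $\pa_x^{\,j}\pa_y^{\,k}\delta$ is homogeneous of the real degree $-(j+k+2)\le -2$, and these are linearly independent with distinct degrees, so a nonzero combination can only be homogeneous of an integer degree $\le -2$. Comparing real parts, the hypotheses $\sigma m<\re{\lam}-1$ and $m\,\re{1/\mu}<\re{\lam}-1$ give $\re{\lam-1-m\sigma}>0$ and $\re{\lam-1-m/\mu}>0$, while in the third case $\re{\lam-1-im/\beta}=\re{\lam}-1>0$ automatically. A positive real part of the degree is incompatible with any degree $\le -2$, so all $c_{jk}=0$ and the distribution is $0$. This also explains why the third statement needs no restriction on $m$: the degree of $Z_{i\beta}$ is purely imaginary, so the obstruction never arises.

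\smallskip

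\noindent The step I expect to demand the most care is making this homogeneity argument legitimate at the distributional level, i.e.\ justifying both that $Z^{-m}$ itself \emph{is} a distribution and that $L(Z^{-m})$ \emph{is} homogeneous. When the real part of the degree of $Z^{-m}$ exceeds $-2$ the power is locally integrable and both points are immediate; in the remaining range one must first fix $Z^{-m}$ as the (unique) homogeneous extension of the function, which exists because its degree is non-real whenever $\mu\notin\R$, and then record that $L$ is covariant under the dilations $x\mapsto tx$, so that it raises degrees of homogeneity by $\lam-1$ on distributions as well. A concrete alternative that sidesteps the extension theory is to pair $L(Z^{-m})$ with a test function $\psi$, excise the disc $r\le\ep$, and apply the divergence theorem: the boundary integral over $r=\ep$ carries a factor $\ep^{\re{\lam}}$ from the $\lam$--homogeneous coefficients $A,B$, a factor $\ep^{-m\,\re{1/\mu}}$ from $Z^{-m}$, and $\ep$ from the arclength, so its modulus is $O\!\left(\ep^{\,\re{\lam}+1-m\re{1/\mu}}\right)$; the hypotheses on $m$ force this exponent to be positive, the boundary term tends to $0$ as $\ep\to 0$, and one obtains $\<L(Z^{-m}),\psi\>=0$.
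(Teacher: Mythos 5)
Your concluding ``concrete alternative'' is, in substance, the paper's own proof: the paper gives no computation for this proposition at all, saying only that it is a ``direct consequence of the order of vanishing of $L$ at $0$,'' and that remark unpacks exactly into your excision argument. Since ${}^tL\psi=-\left(A\psi_x+B\psi_y+\mathrm{div}(L)\,\psi\right)$ and $A,B$ are $\lam$-homogeneous with $\re{\lam}>1$, one has ${}^tL\psi=O(r^{\re{\lam}-1})$ near the origin, so the pairing $\langle Z^{-m},{}^tL\psi\rangle$ is an absolutely convergent integral even when $Z^{-m}\notin L^1_{\mathrm{loc}}$, and the circle term produced by the divergence theorem is $O\left(\ep^{\,\re{\lam}+1-m\re{1/\mu}}\right)\to 0$ (with $\sigma m$, resp.\ $0$, in place of $m\re{1/\mu}$ in the other two cases). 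This treats all three cases uniformly, it is insensitive to whether $Z^{-m}$ admits a canonical distribution extension, and it even shows the conclusion under the weaker hypothesis $m\,\re{1/\mu}<\re{\lam}+1$; the stronger hypothesis in the statement is what additionally makes each product $A\,\pa_x(Z^{-m})$, $B\,\pa_y(Z^{-m})$ continuous and vanishing at $0$.

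By contrast, your primary Step 1/Step 2 scheme has a genuine hole exactly in the cases your parenthetical does not cover. When $\mu=0$ or $\mu\in\R^+$, the degree $-m\sigma$, resp.\ $-m/\mu$, is real, and if it is an integer $\le -2$ a homogeneous distribution extension of $r^{-m\sigma}\ei{-im\phi(\ta)}$ need not exist: existence is obstructed by finitely many moment conditions on $\ei{-im\phi(\ta)}$ over the circle (H\"{o}rmander's theory of homogeneous extensions), and the available regularizations are then only almost homogeneous, picking up logarithms, which destroys the degree comparison in Step 2. There is also a quieter issue: for non-integer $\lam$ the coefficients $A,B$ are only finitely differentiable at $0$, so even defining $LT$ for a distribution extension $T$ of $Z^{-m}$ (a product of $A,B$ with distributional derivatives), and hence invoking ``supported at the origin $\Rightarrow$ finite combination of $\pa_x^{\,j}\pa_y^{\,k}\delta$,'' requires justification. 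Neither problem touches the excision computation, which never needs $Z^{-m}$ to be more than a function integrated against the rapidly vanishing ${}^tL\psi$. So your proposal is correct, but only because the divergence-theorem argument is present; it should be promoted to the proof, with the homogeneity count kept as motivation.
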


Now we consider the continuous solutions of (3.1).

\begin{theorem}
Suppose that $L$ satisfies $\condp$ in $\R^2\baks\{0\}$. Let $Z_\mu$ be the first
integral given by $(2.11)$ and let $\Om\subset\R^2$
be open. Then $u\in C^0(\Om)$ satisfies $(3.1)$ if and only if there exists a
function $H$ holomorphic in $Z_\mu(\Om)$ such that $u=H\circ Z_\mu$.
\end{theorem}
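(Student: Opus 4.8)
The plan is to prove the two implications separately, the decisive ingredient being that $\condp$ forces $Z_\mu$ to be a global homeomorphism.

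For the direction $\Leftarrow$, suppose $u=H\circ Z_\mu$ with $H$ holomorphic on $Z_\mu(\Om)$. Then $u\in C^0(\Om)$, and on $\Om\baks\{0\}$, where $Z_\mu$ is $\cinf$, the chain rule gives $Lu=(H'\circ Z_\mu)\,LZ_\mu=0$, since $LZ_\mu=0$; thus $Lu=0$ in $\mcald(\Om\baks\{0\})$. If $0\in\Om$, it remains to rule out a contribution concentrated at the origin. Because $\supp(Lu)\subset\{0\}$, the distribution $Lu$ is a finite sum $\sum_{|\alpha|\le N}c_\alpha\pa^\alpha\delta$, and I would eliminate it exactly as in Remark 2.2 and Proposition 3.2: approximating $H$ near $0$ by its Taylor polynomials $P_n$, each $L(P_n\circ Z_\mu)=0$ in $\mcald(\R^2)$ by the reasoning behind those statements, and passing to the limit using the local uniform convergence $P_n\circ Z_\mu\to u$ forces $Lu=0$. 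Equivalently, a dilation argument based on the $(\lam-1)$-homogeneity of $L$, the boundedness of $u$, and $\re{\lam}>1$ shows that no derivative of $\delta$ can survive.

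For the direction $\Rightarrow$, assume $u\in C^0(\Om)$ solves $Lu=0$. Since $L$ satisfies $\condp$ in $\R^2\baks\{0\}$, Lemma 2.1 gives $\re{\mu}>0$, so $Z_\mu$ is the first integral (2.11), and by Proposition 2.2 the map $Z_\mu\colon\R^2\to\C$ is a global homeomorphism. In particular $Z_\mu\colon\Om\to Z_\mu(\Om)$ is a homeomorphism onto an open set, and $H:=u\circ Z_\mu^{-1}$ is a well-defined continuous function on $Z_\mu(\Om)$; its single-valuedness is guaranteed precisely by the injectivity of $Z_\mu$. Once $H$ is shown holomorphic, $u=H\circ Z_\mu$ holds by construction.

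To prove holomorphy of $H$ I would first work on $Z_\mu(\Om)\baks\{0\}$, noting $Z_\mu(0)=0$ since $\re(1/\mu)>0$. Away from the origin $Z_\mu$ is a $\cinf$ first integral with $dZ_\mu\ne 0$, so $L$ defines a locally integrable structure of rank one, and the Baouendi--Treves approximation theorem (see \cite{Tre-Boo}) applies: near every point $u$ is a uniform limit of polynomials $P_n(Z_\mu)$. Transporting through the homeomorphism, $H$ is locally a uniform limit of the polynomials $P_n$, hence holomorphic on $Z_\mu(\Om)\baks\{0\}$. (Alternatively, on the elliptic region $\Om\baks(\Si\cup\{0\})$ the relation $L\wedge\ov L\ne 0$ makes $Z_\mu$ a local diffeomorphism conjugating $L$ to a nonvanishing multiple of $\pa_{\ov w}$, whence $\pa_{\ov w}H=0$; one would then still have to remove the image of the characteristic rays.) Finally, $H$ is continuous, hence bounded, near $0$ and holomorphic on a punctured neighborhood of $0$, so Riemann's removable singularity theorem extends $H$ holomorphically across $0$, giving $H$ holomorphic on all of $Z_\mu(\Om)$.

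The main obstacle is the holomorphy of $H$ across the characteristic set $\Si$ and at the singular point $0$, where $L$ is non-elliptic and $Z_\mu$ fails to be a local diffeomorphism. The Baouendi--Treves theorem is what lets me bypass a delicate case analysis along $\Si$ (whose image under $Z_\mu$ is a generally complicated, nowhere dense union of spiral arcs, so a direct curve-removability argument is not automatic), while Riemann's theorem disposes of the isolated point $0$. Condition $\condp$ enters twice: through Lemma 2.1 to secure $\re{\mu}>0$, and through Proposition 2.2 to secure the injectivity of $Z_\mu$ that makes $H$ globally single-valued.
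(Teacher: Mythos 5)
Your proof is correct, and its skeleton matches the paper's; the difference is in how much is filled in and which black box is used. The paper dispatches the theorem in one sentence, calling it ``a direct consequence of the hypocomplexity of $L$ in $\R^2\baks\{0\}$ and of the fact that $Z_\mu$ is a homeomorphism (Proposition 2.2)'' --- that is, it cites hypocomplexity (Remark 1.1, itself resting on $\condp$) to get the local factorization $u=H_p\circ Z_\mu$ with $H_p$ holomorphic, and lets the homeomorphism make $H$ globally single-valued. You reach the same local statement by a more self-contained route: the Baouendi--Treves approximation (valid for any locally integrable structure, no $\condp$ needed) gives $H_p$ continuous on $Z_\mu(O_p)$ and holomorphic in its interior, and the openness of $Z_\mu$ --- which is exactly where $\condp$ enters, via Proposition 2.2 --- makes that interior all of $Z_\mu(O_p)$. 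This is precisely the mechanism the paper itself deploys in the proofs of Theorems 3.2 and 3.3, so your argument is in its spirit while trading the hypocomplexity citation for a more elementary ingredient; in fact your version needs only that $Z_\mu$ is injective and open, slightly less than full hypocomplexity. You also supply two steps the paper leaves entirely implicit: the removal of the singularity of $H$ at $Z_\mu(0)=0$ by Riemann's theorem, and the converse direction, where your claim that $L(P_n\circ Z_\mu)=0$ in $\mcald(\R^2)$ is indeed covered by Proposition 3.2 (take $m=-k\le 0$, so that $\re{1/\mu}\,m\le 0<\re{\lam}-1$ holds automatically), after which passing to the limit under ${}^tL$ kills any distribution supported at the origin. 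Net effect: same strategy, but your write-up is more complete and marginally more general than the paper's.
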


This theorem is a direct consequence of the hypocomplexity of $L$ in $\R^2\baks\{0\}$ and
of the fact that $Z_\mu :\, \R^2\,\longrightarrow\, \C$ is a homeomorphism (Proposition 2.2).

\begin{theorem}
Suppose that $\mu=0$ and let $Z_0$ be the first integral of $L$ given by $(2.7)$.
Let $\Om$ be open and simply connected in $\R^2$ and such that $Z_0(\Om)$ is simply connected
in $\C$. Then every solution $u\in C^0(\Om)$  of $(3.1)$ satisfies
$u\in\cinf\left(\Om\baks Z_0^{-1}(\R)\right)$
\end{theorem}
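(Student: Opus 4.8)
The plan is to argue locally, showing that $u$ is $\cinf$ near each point $p_0=(r_0,\ta_0)\in\Om\baks Z_0^{-1}(\R)$. Writing $Z_0=r^\sigma\ei{-\phi_2(\ta)}\left(\cos\phi_1(\ta)+i\sin\phi_1(\ta)\right)$ one has $\mathrm{Im}(Z_0)=r^\sigma\ei{-\phi_2(\ta)}\sin\phi_1(\ta)$, so the hypothesis $p_0\notin Z_0^{-1}(\R)$ means exactly $0<\phi_1(\ta_0)<\pi$, i.e. $Z_0(p_0)$ lies in the open upper half-plane. The easy points are the elliptic ones: if $p_0\notin\Si$ then $L$ is elliptic, hence hypocomplex, near $p_0$, so the continuous solution $u$ is automatically $\cinf$ there and locally $u=H\circ Z_0$ with $H$ holomorphic (the local form already exploited in Theorem 3.1).

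First I would globalize this representation. On $\Om\baks\Si$ the field $L$ is hypocomplex with first integral $Z_0$, so $u$ is locally $H\circ Z_0$ with $H$ holomorphic on a full neighborhood of the corresponding value. Analytically continuing these pieces, and using that $\Om$ and $Z_0(\Om)$ are simply connected so that the monodromy is trivial, I obtain a single function $H$, holomorphic on the open set of regular values $Z_0(\Om\baks\Si)$, with $u=H\circ Z_0$ on $\Om\baks\Si$. Since $Z_0\in\cinf(\R^2\baks\{0\})$ (Proposition 2.1), the composition $H\circ Z_0$ is $\cinf$ wherever $H$ is holomorphic at $Z_0(\cdot)$.

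The theorem thus reduces to showing, for each characteristic $p_0$ with $0<\phi_1(\ta_0)<\pi$, that $H$ extends holomorphically to a neighborhood of $z_0=Z_0(p_0)$. Recall $\phi_1'(\ta_0)=\sigma\,\re{q(\ta_0)\ov{p(\ta_0)}}/|p(\ta_0)|^2=0$, so the characteristic directions are precisely the critical points of $\phi_1$. If $\phi_1$ is monotone through $\ta_0$ (equivalently $\re{q\ov p}$ does not change sign, i.e. $L$ satisfies $\condp$ at $\ta_0$), then $\ta\mapsto\phi_1(\ta)$ is injective, the nearby elliptic points cover both sides of the critical arc $\gamma=\{\arg z=\phi_1(\ta_0)\}$, and $H$, being holomorphic off $\gamma$ and continuous across it, extends by a Morera-type removability argument; alternatively Remark 1.2 gives $u\in\cinf$ at $p_0$ directly.

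The step I expect to be the main obstacle is the remaining case, in which $\ta_0$ is a strict local extremum of $\phi_1$ (so $\condp$ fails, as it must somewhere once $\mu=0$, by Lemma 2.1). Here $Z_0$ folds along the ray $\ta=\ta_0$: a neighborhood of $p_0$ is mapped two-to-one onto a one-sided neighborhood of $z_0$ bounded by the real-analytic arc $\gamma$, and a priori $z_0$ is only a boundary point of the image, where $H$ could carry a branch-type singularity and $u$ fail to be smooth. The way out is to prove that $z_0$ is nonetheless interior to $Z_0(\Om)$: since $\phi_1$ attains values larger than $\phi_1(\ta_0)$ at other directions, the opposite sector $\{\arg z>\phi_1(\ta_0)\}$ near $z_0$ is covered by the images of the elliptic points lying over those directions, and it is exactly here that the global hypotheses must be used—simple connectivity of $Z_0(\Om)$ together with $\Om$ enclosing the characteristic ray (in the setting of this paper, $\Om\ni 0$)—to guarantee this covering at the correct modulus. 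Once both sides are present, the two one-sided representations $u=H_\pm\circ Z_0$ agree on $\gamma$ by continuity of $u$ and glue across $\gamma$ by Schwarz reflection, so the single holomorphic $H$ extends to a full neighborhood of $z_0$ and $u=H\circ Z_0$ is $\cinf$ at $p_0$. Making this covering/interiority statement precise, and reconciling it with the simple-connectivity assumption on $Z_0(\Om)$, is the crux of the argument.
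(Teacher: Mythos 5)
Your proposal follows the paper's skeleton for the globalization step: the paper likewise builds local representations $u=H_p\circ Z_0$ and glues them by analytic continuation, using simple connectivity of $Z_0(\Om)$ (inside $\C^+$, with $Z_0(0)=0$) to make the continued $H$ single-valued. The structural difference is that the paper invokes the constancy-on-fibers principle (Baouendi--Treves) at \emph{every} point of $\Om$, characteristic points included, so its continued $H$ is continuous on all of $Z_0(\Om)\baks\{0\}$ and holomorphic in the interior of $Z_0(\Om)$, with no case analysis along $\Si$; the proof then simply asserts that the conclusion follows. Your treatment of elliptic points, and of characteristic points where $\condp$ holds, is correct (the hypoellipticity statement you want there is Remark 1.1, not 1.2).

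The fold case, which you leave open, is a genuine gap, and it cannot be closed along the route you propose: the interiority statement you need ("$z_0=Z_0(p_0)$ is interior to $Z_0(\Om)$") is \emph{not} a consequence of the hypotheses of the theorem, not even if one adds $0\in\Om$. Concretely, take $p\equiv 1$ and $q=\phi_1'$, where $\phi_1$ is smooth, $2\pi$-periodic, with minimum $0$, maximum $\pi$, isolated critical points, and an additional \emph{strict local} maximum $\phi_1(\ta_0)=\pi/2$; then $\mu=0$, $\sigma=1$, $Z_0=r\,\ei{i\phi_1(\ta)}$, and $(\mathcal{C}1)$, $(\mathcal{C}2)$ hold. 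Let $\Om$ be the union of the sector $\{0\le r<2r_0,\ \abs{\ta-\ta_0}<\delta\}$, with $\delta$ so small that $\phi_1\le\pi/2$ on it, and a disc $D(0,\ep)$ with $\ep\ll r_0$. Both $\Om$ and $Z_0(\Om)$ are starlike with respect to $0$, hence simply connected, so all hypotheses hold; yet near $z_0=ir_0=Z_0(r_0,\ta_0)$ the image $Z_0(\Om)$ lies in $\{\arg z\le\pi/2\}$, because the points of $\Om$ whose arguments $\phi_1(\ta)$ exceed $\pi/2$ all have modulus less than $\ep$. Thus $z_0\in\pa Z_0(\Om)$. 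Now let $H(z)=(z-z_0)^{1/2}$ with branch cut from $z_0$ in the direction $\ei{3i\pi/4}$, which meets $\ov{Z_0(\Om)}$ only at $z_0$: then $H$ is continuous on $Z_0(\Om)$, holomorphic in its interior, and $u=H\circ Z_0$ is continuous on $\Om$ and is a distribution solution of $Lu=0$ (on each $\ov{Z_0(\Om')}$ with $\Om'\Subset\Om$, a starlike compact set with connected complement, Mergelyan's theorem gives polynomials $P_n\to H$ uniformly, and each $P_n\circ Z_0$ solves the equation, cf. Remark 2.2). But $u(r,\ta_0)$ behaves like $\abs{r-r_0}^{1/2}$, so $u$ is not even $C^1$ at $(r_0,\ta_0)$, a point of $\Om\baks Z_0^{-1}(\R)$. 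So the smoothness conclusion genuinely requires that non-real values be \emph{interior} values of $Z_0(\Om)$; this holds for the saturated envelopes $\Om_L$ of Theorem 3.5 (part (e)), but not for a general $\Om$ as in Theorem 3.2, and the paper's own proof passes over exactly this point when it says "the conclusion follows." In short, you have correctly located the crux, but the proposal is incomplete there, and that step cannot be filled from the stated hypotheses: a complete argument must either restrict the conclusion to points mapping into the interior of $Z_0(\Om)$ or add a saturation-type hypothesis on $\Om$. (A secondary issue: your dichotomy "monotone through $\ta_0$" versus "strict local extremum" is not exhaustive, since $\re{q\ov{p}}$ may change sign infinitely often near $\ta_0$; but this is subsumed by the main problem.)
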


\begin{proof}
We know from the principle of constancy on the fibers of locally integrable structures
(which is a consequence of the Baouendi-Treves approximation formula,
see {\cite{Ber-Cor-Hou}} or {\cite{Tre-Boo}}) that, for every $p\in\Om$, there exists
an open set $ O_p\subset\Om$, with $p\in O_p$, and a function $H_p\in C^0(Z_0(O_p))$, with
$H_p$ holomorphic in the interior of $Z_0(O_p)$, such that $u=H_p\circ Z_0$ in $O_p$.
By analytic continuation, each element $H_p$ leads to (an a priori multi-valued) function
$H$ continuous in $Z_0(\Om)\baks\{0\}$ and holomorphic in its interior.
Now, since $Z(\Om)$ is simply connect in the upper half plane $\C^+$ and $Z_0(0)=0$, then
$H$ is single-valued and the conclusion follows.
\end{proof}

\begin{theorem}
Suppose that $\re{\mu}>0 $. Let $Z_\mu$ be the first integral of $L$ given by $(2.11)$.
Let $\Om\ni 0$ be an open and simply connected bounded subset of
$\R^2$  and such that $Z_\mu(\Om)$ is simply
connected in $\C$. Then  for every $u\in C^0(\Om)$ satisfying  $(3.1)$, we have
$u\in\cinf(\Om\baks\{0\})$.
\end{theorem}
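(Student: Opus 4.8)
The plan is to mimic the proof of Theorem 3.2, replacing the half–plane $\C^+$ by the full plane $\C$. First I would invoke the principle of constancy on the fibers of the locally integrable structure defined by $Z_\mu$ (a consequence of the Baouendi--Treves approximation formula): for every $p\in\Om$ there are an open set $O_p\subset\Om$ with $p\in O_p$ and a function $H_p\in C^0(Z_\mu(O_p))$, holomorphic in the interior of $Z_\mu(O_p)$, with $u=H_p\circ Z_\mu$ on $O_p$. Analytic continuation of the germs $H_p$ produces an a priori multivalued holomorphic function on $\mathrm{Int}\,Z_\mu(\Om)$; since $\Om$ and $Z_\mu(\Om)$ are both simply connected and $Z_\mu(0)=0$, the monodromy argument of Theorem 3.2 shows that this function is single valued. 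Thus there is a single $H$, continuous on $Z_\mu(\Om)$ and holomorphic on $\mathrm{Int}\,Z_\mu(\Om)$, with $u=H\circ Z_\mu$ on $\Om$.

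Next I would transfer smoothness through this representation. Since $\re\mu>0$, we have $Z_\mu\in\cinf(\R^2\baks\{0\})$, and $\abs{Z_\mu(r,\ta)}=r^{\re{1/\mu}}\ei{-\phi_2(\ta)}$ gives $Z_\mu^{-1}(0)=\{0\}$. Hence at any $p\in\Om\baks\{0\}$ for which $Z_\mu(p)\in\mathrm{Int}\,Z_\mu(\Om)$, the function $H$ is holomorphic near $Z_\mu(p)$ and $u=H\circ Z_\mu$ is $\cinf$ at $p$. The theorem therefore reduces to the inclusion
\[
Z_\mu(\Om\baks\{0\})\subset \mathrm{Int}\,Z_\mu(\Om).
\]
This is precisely where $\re\mu>0$ is decisive: by Proposition 2.2 the map $Z_\mu:\R^2\to\C$ is onto, so---unlike Theorem 3.2, where $Z_0(\R^2)=\C^+$ and the boundary $\R=\pa\C^+$ forced the exceptional set $Z_0^{-1}(\R)$---here the target has no boundary, and the only value that can fail to be interior is $0=Z_\mu(0)$.

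To prove the inclusion I would split $\Om\baks\{0\}$ along $\Si$. On $\Om\baks\Si$ the field $L$ is elliptic, $Z_\mu$ is a local diffeomorphism, hence open, so $Z_\mu(\Om\baks\Si)$ is open and contained in $\mathrm{Int}\,Z_\mu(\Om)$; there $u$ is manifestly $\cinf$. The delicate points are those of $\Si\baks\{0\}$, where, absent $\condp$, the angular map $\ta\mapsto\ta+\phi_1(\ta)$ need not be monotone and $Z_\mu$ may fold, so its local degree at such a $p$ can vanish. For $p=(r_0,\ta_0)$ on a characteristic ray, with $w_0=Z_\mu(p)$, I would run a degree/winding argument: on a small circle $\{r=\ep\}\subset\Om$ the curve $Z_\mu|_{\{r=\ep\}}$ has winding number $1$ about the origin, and together with the $(1/\mu)$--homogeneity of $Z_\mu$ this should force $w_0$ to be an interior value---concretely, $w_0$ is also attained at a noncharacteristic preimage, across which the image covers a full neighborhood of $w_0$ from the side the fold itself misses.

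The main obstacle is exactly this last point: showing that for a characteristic $p$ the opposite (elliptic) sheet realizing $w_0=Z_\mu(p)$ actually lies inside $\Om$, so that $w_0$ is covered from both sides. This is where boundedness and simple connectivity of $\Om$, together with simple connectivity of $Z_\mu(\Om)$, must enter in an essential way---beyond their role (as in Theorem 3.2) of making $H$ globally single valued. Once $Z_\mu(\Om\baks\{0\})\subset\mathrm{Int}\,Z_\mu(\Om)$ is secured, the conclusion $u\in\cinf(\Om\baks\{0\})$ follows at once from $u=H\circ Z_\mu$ and the smoothness of $Z_\mu$ away from $0$.
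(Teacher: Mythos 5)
Your first step (local representation $u=H_p\circ Z_\mu$ via constancy on fibers, then analytic continuation) matches the paper, but the single-valuedness step is where your proposal breaks down, and it is precisely the content of the paper's proof. You assert that ``since $\Om$ and $Z_\mu(\Om)$ are both simply connected and $Z_\mu(0)=0$, the monodromy argument of Theorem 3.2 shows that this function is single valued.'' That argument does not transfer. In Theorem 3.2 the image lies in $\C^+$ and $0=Z_0(0)$ is a \emph{boundary} point of $\C^+$, so deleting it leaves $Z_0(\Om)\baks\{0\}$ simply connected and the monodromy is trivial for purely topological reasons. Here, because $\re{\mu}>0$ and $0\in\Om$, the point $0$ is an \emph{interior} point of $Z_\mu(\Om)$ (since $\abs{Z_\mu}=r^{\re{1/\mu}}\ei{-\phi_2(\ta)}$ and the angular map has degree one, $Z_\mu(D(0,\ep))$ contains a disc about $0$); hence $Z_\mu(\Om)\baks\{0\}$ is \emph{not} simply connected, and topology alone says nothing about the monodromy around $0$ --- the paper's proof states this explicitly. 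The replacement argument, which is the key idea you are missing, is: since $\Om$ is simply connected, the generator of $\pi_1\left(Z_\mu(\Om)\baks\{0\}\right)$ is represented by $\Gamma=Z_\mu(C(0,\ep))$, a curve of winding number $1$ about $0$; continuation of a germ of $H$ along $\Gamma$ is realized by continuing the single-valued solution $u$ around the circle $C(0,\ep)$, so the continuation returns to its initial value, $H_1(Z_\mu(p_0))=u(p_0)=H_2(Z_\mu(p_0))$, and $H$ is single-valued.

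On the interiority issue you raise at the end: you are right that passing from $u=H\circ Z_\mu$ to $u\in\cinf(\Om\baks\{0\})$ requires $Z_\mu(p)\in\mathrm{Int}\,Z_\mu(\Om)$ for $p\ne 0$, and right to flag your degree/winding sketch as an obstacle rather than a proof --- it cannot be completed from the stated hypotheses, because nothing forces the other (elliptic) preimage of $w_0=Z_\mu(p)$ to lie in $\Om$. Concretely, take Example 2 with $k=2$ and $\mu=1$, and let $\Om$ be a thin open sector about the characteristic ray $\ta=\pi/3$ together with a small disc about $0$: both $\Om$ and $Z_\mu(\Om)$ are star-shaped about $0$, hence simply connected, yet for $p$ on that ray (away from the small disc) the point $Z_\mu(p)$ lies on $\pa Z_\mu(\Om)$, since the angular map $\ta+\sin 2\ta$ has a local maximum at $\pi/3$ and the fold's opposite sheet is outside $\Om$. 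Note, for comparison, that the paper's own proof never addresses this step --- it ends once $H$ is shown single-valued --- and elsewhere the paper asserts smoothness at characteristic points only under the additional hypothesis that $Z_\mu(p)$ is interior (Remark 3.1), verifying that hypothesis only for the starlike envelopes $\Om_L$ (Theorem 3.6(d)). So your concern is well founded; but measured against the paper, what your write-up lacks is the winding-number/continuation-along-$C(0,\ep)$ argument above, which is the actual substance of the paper's proof of this theorem.
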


\begin{proof}
As in the proof of Theorem 3.2, for every $p\in\Om$, there exists an open subset $O_p\ni p$
and $H_p$ continuous on $Z_\mu(O_p)$ and holomorphic in its interior, such that $u=H_p\circ Z_\mu$
in $O_p$. The analytic continuation leads to a (multi-valued) function $H$ in $Z_\mu(\Om)\baks\{0\}$.
We need to verify that $H$ is single-valued. This time, $0$ is in the interior of $Z_\mu(\Om)$
and $Z_\mu(\Om)\baks\{0\}$ is not simply connected. Since $\Om$ is simply connected, then $\pa\Om$ is
homotopic in $\Om\baks\{0\}$ to a circle $C(0,\ep)$ with center $0$ and radius $\ep >0$ (small).
The image of this circle by $Z_\mu$ is the curve $\Gamma$ given by $\ep^{1/\mu}\ei{i(\ta+\phi(\ta))}$.
It has winding number 1 about $0\in\C$. Let $p_0\in C(0,\ep)$ be such that $Z_\mu(p_0)\in \Gamma$. If
$H$ were not single-valued, then the analytic continuation of an element $H_1$ of $H$ at $Z_\mu(p_0)$,
with value $H_1(Z_\mu(p_0))$, would lead to an element $H_2$ of $H$, with value
$H_2(Z_\mu(p_0))\ne H_1(Z_\mu(p_0))$. But the continuation along the curve $\Gamma$ is realized
through the continuation along the circle $C(0,\ep)$ of the function $u$ leading to
$H_1(Z_\mu(p_0))=u(p_0)=H_2(Z_\mu(p_0))$. Thus, $H$ is single-valued.
\end{proof}

\begin{theorem}
Suppose $\mu =i\beta\in i\R^\ast$. Let $Z_{i\beta}$ be the first integral of $L$ given by $(3.12)$
with $m=\min \abs{Z_{i\beta}}$ and $M=\max\abs{Z_{i\beta}}$.
Let $\Om$ be an open  subset of $\R^2\baks\{0\}$ containing an annulus
\[
\A (r_1,r_2)=\{ (x,y);\ r_1^2\le x^2+y^2\le r_2^2\}\quad\mathrm{with}\quad
r_2>r_1\exp(2\pi\abs{\beta})\, .
\]
Then, for every $u\in C^0(\Om)$ satisfying $(3.1)$, there exist $H\in C^0(\A (m,M))$,
 $H$ holomorphic in the
interior of $\A(m,M)$ and such that $u=H\circ Z_{i\beta}$.
It follows, in particular, that
$u$ extends as a continuous solution of $(3.1)$ to the punctured plane $\R^2\baks\{0\}$.
\end{theorem}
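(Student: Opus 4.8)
The plan is to follow the scheme of Theorems 3.2 and 3.3: first produce a local holomorphic representation of $u$ through $Z_{i\beta}$, then globalize it by analytic continuation, the only genuinely new difficulty being that $Z_{i\beta}$ now wraps $\R^2\baks\{0\}$ infinitely often around the target annulus. Concretely, the principle of constancy on the fibers of the locally integrable structure (the Baouendi--Treves approximation formula, as in the proof of Theorem 3.2) gives, for each $p\in\Om$, an open set $O_p\ni p$ and a function $H_p$, continuous on $Z_{i\beta}(O_p)$ and holomorphic in its interior, with $u=H_p\circ Z_{i\beta}$ on $O_p$. Analytic continuation of these germs produces an a priori multi-valued holomorphic function $H$ on the interior of $\A(m,M)$. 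To reach the conclusion I must establish two things: that $H$ is actually defined on all of $\A(m,M)$, and that it is single-valued.

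The first point is exactly where the hypothesis $r_2>r_1\exp(2\pi\abs{\beta})$ enters, through the surjectivity of $Z_{i\beta}$ restricted to the given annulus. By $(2.12)$, $\abs{Z_{i\beta}(r,\ta)}=\ei{-\phi_2(\ta)}$, so as $\ta$ runs over $\cir$ the modulus already sweeps the whole interval $[m,M]$. Fixing such a $\ta$ and letting $r$ range over $[r_1,r_2]$, the argument $\ta+\phi_1(\ta)-\dis\frac{\ln r}{\beta}$ moves through an interval of length $\dis\frac{\ln(r_2/r_1)}{\abs{\beta}}>2\pi$, hence meets every residue modulo $2\pi$. Therefore every $w\in\A(m,M)$ has a preimage in $\A(r_1,r_2)\subset\Om$, so $Z_{i\beta}(\Om)=\A(m,M)$ and $H$ is defined on the full closed annulus; the continuity up to the bounding circles $\abs z=m$ and $\abs z=M$ follows from the continuity of $u$ together with the local representations, as in the interior case.

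For single-valuedness I would argue as in Theorem 3.3 by computing the monodromy of $H$ around the generator of $\pi_1$ of the open annulus. Pick a radius $r_0\in[r_1,r_2]$; the circle $C(0,r_0)$ lies in $\Om$, and its image $\ta\mapsto\ei{-\phi_2(\ta)}\exp[i(\ta+\phi_1(\ta)-\ln r_0/\beta)]$ has winding number $1$ about the origin, since $\ta+\phi(\ta)$ is $2\pi$-periodic and its argument increases by $2\pi$ as $\ta$ runs over $\cir$. Thus this image is a loop generating $\pi_1(\{m<\abs z<M\})$, and continuing $H$ once around it is realized by continuing $u$ once around $C(0,r_0)$. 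Because $u$ is a genuine single-valued function on $\Om$, this returns $u$ to its initial value, so the monodromy of $H$ is trivial and $H$ is single-valued on $\A(m,M)$. With $H$ now a well-defined element of $C^0(\A(m,M))$, holomorphic inside, each local identity $u=H_p\circ Z_{i\beta}$ becomes $u=H\circ Z_{i\beta}$, and these patch to give $u=H\circ Z_{i\beta}$ throughout $\Om$.

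The extension is then immediate: $\til u=H\circ Z_{i\beta}$ is continuous on all of $\R^2\baks\{0\}$ (composition of the continuous $H$ with the continuous $Z_{i\beta}$), agrees with $u$ on $\Om$, and satisfies $L\til u=H'(Z_{i\beta})\,LZ_{i\beta}=0$ since $Z_{i\beta}$ is a first integral. The step I expect to be the main obstacle is the surjectivity: one must use the sharp quantitative hypothesis $r_2>r_1\exp(2\pi\abs{\beta})$ to guarantee that the radial arcs of $Z_{i\beta}$ sweep a full turn and thereby cover the whole annulus, for this is precisely what forces $H$ to be defined on all of $\A(m,M)$ rather than on a proper subset. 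The single-valuedness, by contrast, only requires the presence of some circle $C(0,r_0)$ in $\Om$ and then follows the already-established topological argument of Theorem 3.3.
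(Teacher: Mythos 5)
Your proposal is correct and matches the paper's own proof: the paper, too, first verifies $Z_{i\beta}(\A(r_1,r_2))=\A(m,M)$ by observing that $\ln r_2-\ln r_1>2\pi\abs{\beta}$ forces each radial segment $\{r\ei{i\ta_0}:\ r_1\le r\le r_2\}$ to map onto the whole circle $\abs{z}=\ei{-\phi_2(\ta_0)}$, with these radii sweeping all of $[m,M]$ as $\ta_0$ varies, and then invokes the analytic-continuation/monodromy argument of Theorem 3.3 (continuation of $H$ along the image loop is realized by continuation of the single-valued $u$ along a circle) to obtain the single-valued $H$. The only blemish is a harmless misstatement: it is $\phi$ that is $2\pi$-periodic (so $\ta+\phi_1(\ta)$ increases by $2\pi$ over one period, giving winding number $1$), not $\ta+\phi(\ta)$.
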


\begin{proof}
First, we verify that $Z_{i\beta}(\A(r_1,r_2))=\A(m.M)$. Let $\ta_m,\, \ta_M\,\in [0,\ 2\pi]$
be such that $\abs{Z_{i\beta}(r,\ta_m)}=m$ and $\abs{Z_{i\beta}(r,\ta_M)}=M$. Then $Z_{i\beta}$ maps
the segment $r\ei{i\ta_m}$ (respectively, $r\ei{i\ta_M}$) with $r_1\le r\le r_2$ onto the circle
$\abs{z}=m$ (respectively, $\abs{z}=M$). This is because
\[
Z_{i\beta}(r,\ta)=\ei{-\phi_2(\ta)}\exp\left[i\left(\ta+\phi_1(\ta)-\frac{\ln r}{\beta}\right)\right]
\]
and $\ln r_2-\ln r_1 >2\pi\abs{\beta}$. Hence, the image of each segment $r\ei{i\ta_0}$ with
$r_1\le r\le r_2$ is the circle with radius $\ei{-\phi_2(\ta_0)}$. As $\ta_0$ varies from $0$ to
$2\pi$, these radii reach the minimum $m$ and the maximum $M$.

Using an argument similar to that used in the proof of Theorem 3.3, we can show that if
$u\in C^0(\A(r_1,r_2))$ solves $(3.1)$, then $u=H\circ Z_{i\beta}$ with $H$ continuous
in the annulus $\A(m,M)$ and holomorphic in the interior.
\end{proof}

\begin{remark}
{\rm Theorems 3.1, 3.2, and 3.3 imply, in particular, that whenever $u\in C^0(\Om)$, with $0\in\Om$, solves
(3.1), then for every point $p\in\Si\baks\{0\}$,  such that $Z_\mu(p)$ is an interior point of $Z_\mu (\Om)$, then
the solution $u$ is $\cinf$ at p.}
\end{remark}

Now we consider the extension of the solutions of equation (3.1) across the boundary $\pa\Om$
for a class of open sets and for vector fields with $\mu\ge 0$.
A set $\Om\subset\R^2$ is \emph{starlike} with respect to $0\in\Om$ if the segment
$[0,\ p]\subset\Om$ for every $p\in\Om$.

Let $L$ be a homogeneous vector field with number $\mu \ge 0$ and with first integral $Z_\mu$ given
by (2.11) when $\mu>0$ or by (2.7) when $\mu =0$.
Let $\Om$ be open, bounded, starlike with respect to 0, and with boundary  of class $C^1$.
Define an equivalence relation on the boundary $\pa\Om$ by
\[
p\,\sim\, p'\ \Longleftrightarrow\ \arg Z_\mu(p)= \arg Z_\mu(p')\, .
\]
Denote by $\mathrm{cl}(p)$ the equivalence class of $p$. Hence $\mathrm{cl}(p)$ is a closed
subset of $\pa\Om$. Define the function $\rho$ on $\pa\Om$ by
\begeq
\rho\,:\ \pa\Om\, \longrightarrow\, \R^+ ,\quad \rho(p)=\max_{p'\in\mathrm{cl}(p)}
\abs{Z_\mu(p')}\, .
\stopeq
Note that the function $\rho$ is continuous on $\pa\Om$. Let
\begeq
\Om_L=\bigcup_{p\in\pa\Om} [0,\ \ \Lambda (p)\ei{i\arg p})
\stopeq
where
\begeq
\Lambda (p)=\left\{\begar{ll}
\dis \rho(p)^\mu\ei{\mu\phi_2(\arg p)} &\quad \mathrm{if}\ \mu >0\, ;\\
\dis \rho(p)^{1/\sigma}\ei{\phi_2(\arg p)/\sigma} &\quad \mathrm{if}\ \mu =0\, ,
\stopar\right.\stopeq
where $\sigma$ is given in (2.4).
The set $\Om_L$ is starlike with respect to $0$ and $(\Om_L)_L=\Om_L$. We will prove in the next theorem
that $\Om_L$ is the envelope of extendability of $\Om$: every continuous solution of
(3.1) in $\Om$ extends  as a continuous solution to $\Om_L$.

\begin{theorem}
Suppose that $\mu\ge 0$. Let $\Om$ be open, bounded, with $C^1$ boundary, and starlike with
respect to 0, and let $\Om_L$ be given by $(3.7)$. Then
\begin{itemize}
\item[(a)] $\Om_L=Z_\mu^{-1}(Z_\mu(\Om))$
\item[(b)] If $u\in C^0(\Om)$ solves equation $(3.1)$, then there exists
a solution $\widehat{u}\in C^0(\Om_L)$ of $(3.1)$
such that $\widehat{u}=u$ in
$\Om$.
\item[(c)] There exists $v\in C^0(\Om_L)$ solution of $(3.1)$ such that $v$ has no extension
to a larger set.
\item[(d)] If $\mu >0$, then  any solution $\widehat{u}\in C^0(\Om_L)$ of $(3.1)$
belongs to $\cinf(\Om_L\baks\{0\})$.
\item[(e)] If $\mu =0$, then any solution $\widehat{u}\in C^0(\Om_L)$ of $(3.1)$ belongs to
$\cinf(\Om_L\baks Z_0^{-1}(\R))$.
\end{itemize}
\end{theorem}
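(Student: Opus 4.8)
The decisive point, and the reason the hypothesis is $\mu\ge 0$ \emph{real}, is that then the argument of the first integral depends on $\ta$ alone. Writing $\phi=\phi_1+i\phi_2$ as in $(2.9)$ (resp. $(2.5)$), one has for $\mu>0$
\[
Z_\mu(r,\ta)=r^{1/\mu}\,\ei{-\phi_2(\ta)}\,\ei{i(\ta+\phi_1(\ta))},
\]
and $Z_0(r,\ta)=r^{\sigma}\ei{-\phi_2(\ta)}\ei{i\phi_1(\ta)}$ for $\mu=0$. Thus $\arg Z_\mu=\Psi(\ta)$, with $\Psi(\ta)=\ta+\phi_1(\ta)$ (resp. $\phi_1(\ta)$), is \emph{independent of} $r$, while $r\mapsto\abs{Z_\mu(r,\ta)}$ is a strictly increasing bijection of $(0,\infty)$ onto $(0,\infty)$. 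Hence each ray $\{r\ei{i\ta}\}$ is carried bijectively, and monotonically in $r$, onto the ray $\{\rho\,\ei{i\Psi(\ta)}\}$. Since $\Om$ is starlike with $C^1$ boundary, its intersection with the ray of angle $\ta$ is $[0,R(\ta)\ei{i\ta})$ for a continuous positive radial function $R$; set $g(\ta)=\abs{Z_\mu(R(\ta)\ei{i\ta})}$.

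To get $(a)$ I would first describe the image. By the ray-by-ray bijection, $Z_\mu(\Om)$ meets the ray of argument $\psi$ in $\bigcup_{\Psi(\ta)=\psi}[0,g(\ta))=[0,G(\psi))$, where $G(\psi)=\max_{\Psi(\ta)=\psi}g(\ta)$ (attained, the level set being compact and $g$ continuous). Therefore
\[
Z_\mu(\Om)=\{\,w\in\C:\ 0\le\abs{w}<G(\arg w)\,\},
\]
which is \emph{starlike with respect to} $0$, and open because $dZ_\mu\ne 0$ makes $Z_\mu$ an open map off the origin; in particular $Z_\mu(\Om)$ is simply connected. For $p\in\pa\Om$ with $\arg p=\ta$ the class $\mathrm{cl}(p)=\{p'\in\pa\Om:\Psi(\arg p')=\Psi(\ta)\}$, so $\rho(p)=\max_{\Psi(\ta')=\Psi(\ta)}g(\ta')=G(\Psi(\ta))$; and a one-line check shows that $\Lambda(p)$ from $(3.8)$ is exactly the radius on the ray of angle $\ta$ at which $\abs{Z_\mu}=\rho(p)$. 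Since $r\ei{i\ta}\in Z_\mu^{-1}(Z_\mu(\Om))$ iff $\abs{Z_\mu(r,\ta)}<G(\Psi(\ta))$ iff $r<\Lambda(p)$, both $Z_\mu^{-1}(Z_\mu(\Om))$ and $\Om_L$ meet that ray in $[0,\Lambda(p)\ei{i\ta})$. This is $(a)$, and it shows $\Om_L$ is open, bounded, starlike, with $Z_\mu(\Om_L)=Z_\mu(\Om)$.

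The heart of the matter, and the main obstacle, is $(b)$: $\condp$ is \emph{not} assumed, so $Z_\mu$ need not be injective and $u$ is only \emph{locally} a holomorphic function of $Z_\mu$. By the principle of constancy on the fibers (Baouendi--Treves), for each $p\in\Om$ there are $O_p\ni p$ and $H_p$ continuous on $Z_\mu(O_p)$, holomorphic in its interior, with $u=H_p\circ Z_\mu$ on $O_p$. Analytic continuation of the germs $H_p$ yields an a priori multivalued holomorphic $H$ on the interior of $Z_\mu(\Om)$; its single-valuedness then follows exactly as in Theorem 3.3 when $\mu>0$ (the only loop to control, the one around $0$, is covered by a winding-number-one circle in $\Om$ on which $u$ is genuinely single-valued) and as in Theorem 3.2 when $\mu=0$ (where $Z_0(\Om)$ is simply connected in $\C^+$). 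Extending across the origin by continuity gives a single $H$ holomorphic on $Z_\mu(\Om)$ with $u=H\circ Z_\mu$ on $\Om$. I would then \emph{define} $\widehat u=H\circ Z_\mu$ on $\Om_L=Z_\mu^{-1}(Z_\mu(\Om))$: it is continuous, equals $u$ on $\Om$, and satisfies $L\widehat u=0$, since in $L(H\circ Z_\mu)$ the term $\pa_{\ov w}H\cdot L\ov{Z_\mu}$ vanishes ($H$ holomorphic) and the term $\pa_w H\cdot LZ_\mu$ vanishes ($LZ_\mu=0$).

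For $(c)$, every planar domain is a domain of holomorphy, so I would pick $H$ holomorphic on $Z_\mu(\Om)$ having $\pa(Z_\mu(\Om))$ as a natural boundary and set $v=H\circ Z_\mu$. If $v$ admitted a continuous solution-extension to a connected open $\Om'\supsetneq\Om_L$, choose $q_0\in\pa\Om_L\cap\Om'$ with $q_0\ne 0$; there $Z_\mu$ is a local diffeomorphism carrying $q_0$ to some $w_0\in\pa(Z_\mu(\Om))$, the extension would be $\widetilde H\circ Z_\mu$ near $q_0$ with $\widetilde H$ holomorphic past $w_0$ and agreeing with $H$ inside, contradicting the choice of $H$. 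Finally $(d)$ and $(e)$ are immediate from $(a)$: $\Om_L$ is open, bounded, simply connected (starlike), contains $0$, and $Z_\mu(\Om_L)=Z_\mu(\Om)$ is simply connected, so Theorem 3.3 applies verbatim to $\Om_L$ when $\mu>0$, giving $\widehat u\in\cinf(\Om_L\baks\{0\})$, and Theorem 3.2 applies when $\mu=0$, giving $\widehat u\in\cinf(\Om_L\baks Z_0^{-1}(\R))$.
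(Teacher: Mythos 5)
Your proof follows the paper's own argument in all essentials. Part (a) rests on the same observation (for real $\mu\ge 0$, $Z_\mu$ carries rays to rays, with $\abs{Z_\mu}$ monotone in $r$) and the same identification of $\Lambda(p)$ as the radius at which $\abs{Z_\mu}=\rho(p)$; the paper phrases this as two inclusions, you phrase it ray by ray. Part (b) is in both cases the single-valued holomorphic $H$ on $Z_\mu(\Om)$ (the monodromy argument of Theorems 3.2/3.3, which you re-derive and the paper cites) composed with $Z_\mu$ and read on $\Om_L=Z_\mu^{-1}(Z_\mu(\Om))$; part (c) is in both cases a holomorphic function with natural boundary pulled back by $Z_\mu$. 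Your (d)/(e) -- applying Theorems 3.3/3.2 verbatim to $\Om_L$, whose hypotheses hold by (a) -- is an equivalent and slightly cleaner packaging of the paper's argument that every point of $\Om_L\baks\{0\}$ maps to an interior point of $Z_\mu(\Om)$.

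One caveat: your justification that $Z_\mu(\Om)$ is open ``because $dZ_\mu\ne 0$ makes $Z_\mu$ an open map off the origin'' is not valid. For a $\C$-valued function of two real variables, nonvanishing of $dZ_\mu$ does not imply openness of the map: the real Jacobian of $Z_\mu$ is a positive multiple of $1+\phi_1'(\ta)=\re{q/p}/\mu$, which vanishes on the characteristic rays, and since $\condp$ is \emph{not} assumed in this theorem it may change sign there, so $Z_\mu$ genuinely folds and is not open. This is a real issue, not a formality: at a fold (a strict local extremum of $\ta+\phi_1(\ta)$) the maximum defining $\rho$ can jump, in which case $Z_\mu(\Om)$, and equivalently $\Om_L$, fails to be open. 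To be fair, the paper is no more careful -- it simply asserts before the theorem that $\rho$ is continuous, and its claim $\pa Z_\mu(\Om)=Z_\mu(\pa\Om_L)$ in (d) rests on the same gloss -- so your proof matches the paper's standard of rigor; but you should replace the false ``$dZ_\mu\ne0$ implies open'' implication by the continuity of $\rho$ (equivalently, lower semicontinuity of the radial function $G$ of $Z_\mu(\Om)$), which is the fact that actually gives openness of $Z_\mu(\Om)$ and of $\Om_L$ and which, strictly speaking, requires either $\condp$ or an additional hypothesis on $\Om$.
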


\begin{proof}
(a) Let $p_0=(r_0,\ta_0)\in\Om_L\baks\{0\}$. The ray through $0$ and $p_0$ intersects $\pa\Om$ at
a point $p'_0=(r'_0,\ta_0)$. Thus, we have
\[
r_0=t\Lambda(p_0')=t\, \left[\rho(p_0')\ei{\phi_2(\ta_0)}\right]^\mu,\quad \mathrm{for\ some}\ 0<t<1\, .
\]
The exponent $\mu$ needs to be replaced by $1/\sigma$ if $\mu=0$.
Let $p_\ast=(r_\ast,\ta_\ast)\in\pa\Om$ be such that $p_\ast\in \mathrm{cl}(p_0')$ and
$\rho(p_0')=\abs{Z_\mu(p_\ast)}$.
Hence,
\[
Z_\mu(p_\ast)=\rho(p_0')\ei{i(\ta_\ast+\phi_1(\ta_\ast))}=\rho(p_0')\ei{i(\ta_0+\phi_1(\ta_0))}\, .
\]
Then,
\[
Z_\mu(p_0)=t^{1/\mu}\, \left[\rho(p'_0)\ei{\phi_2(\ta_0)}\right]
\ei{-\phi_2(\ta_0)}\ei{i(\ta_0+\phi_1(\ta_0))}
=t^{1/\mu}\, Z_\mu(p_\ast)\,.
\]
Since $0<t^{1/\mu}<1$ and $Z_\mu(\Om)$ is starlike with respect to 0, then $Z_\mu(p_0)\in Z_\mu(\Om)$
and $\Om_L\subset Z_\mu^{-1}\left(Z_\mu(\Om)\right)$. A similar argument can be used to show that
 $ Z_\mu^{-1}\left(Z_\mu(\Om)\right)\subset\Om_L$

(b) Suppose that $u\in C^0(\Om)$ satisfies (3.1). Then (Theorem 3.3, or 3.2 when $\mu =0$)
there exists a continuous function $H$ on $Z_\mu(\Om)$
and holomorphic in its interior such that $u=H\circ Z_\mu$. The function
$\widehat{u}=H\circ Z_\mu$, defined on $\Om_L=Z_\mu^{-1}\left(Z_\mu(\Om)\right)$, is the sought
extension of $u$.

(c) Let $H$ be continuous in $Z_\mu(\Om)$, holomorphic in the interior, and such that $H$ has no
holomorphic extension. Then $v=H\circ Z_\mu$ is a solution of (3.1) in $\Om_L$ that can't be
extended.

(d)  If $\mu >0$, then, as in part (a), it can be shown that $\pa Z_\mu(\Om)=Z_\mu(\pa\Om_L)$.
Hence, if $p\in\Om_L\baks\{0\}$, then $Z_\mu(p)$ is an interior point of $Z_\mu(\Om_L)\baks\{0\}$
and since $u=H\circ Z_\mu$,  $H$ holomorphic at $Z_\mu (p)$, $u$ is smooth at $p$.

(e) If $\mu =0$, then $\pa Z_0(\Om)\subset Z_0(\pa\Om_L)\cup (\R+i0)$. As in part (d) if
$p\in \Om_L\baks\{0\}$, $p\not\in Z_0^{-1}(\R)$, then $Z_0(p)$ is an interior point of
$Z_0(\Om)$ and $u$ is $\cinf$ at $p$.
\end{proof}

We will say that the vector field $L$ satisfies the \emph{Liouville property} in $\R^2\baks\{0\}$
if every continuous and bounded solution of $Lu=0$  in $\R^2\baks\{0\}$ is constant.
We have the following characterization.

\begin{theorem}
$L$ satisfies the Liouville property if and only if $\re{\mu}\ne 0$
\end{theorem}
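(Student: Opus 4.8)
The plan is to exploit the global first integrals $Z_\mu$ constructed in Section~2, whose images depend precisely on the sign of $\re{\mu}$, and to reduce everything to the classical Liouville theorem for holomorphic functions. Since we work under the standing normalization $\re{\mu}\ge 0$ of (2.2)---a normalization achieved by a linear change of variables that is a diffeomorphism of $\R^2\baks\{0\}$ and hence preserves both the Liouville property and the vanishing of $\re{\mu}$---the statement amounts to: the Liouville property holds if and only if $\re{\mu}>0$, and fails exactly when $\mu=0$ or $\mu=i\beta$ with $\beta\in\R^\ast$.

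For the failure direction I would exhibit a bounded, non-constant, continuous solution of $Lu=0$ in $\R^2\baks\{0\}$ directly from the first integral. When $\mu=i\beta$, Proposition~2.3 shows $\abs{Z_{i\beta}(r,\ta)}=\ei{-\phi_2(\ta)}$ lies in $[m,M]$ with $m>0$, so $u=Z_{i\beta}$ is itself bounded; it is $\cinf$ on $\R^2\baks\{0\}$, solves $Lu=0$, and is non-constant since $dZ_{i\beta}\ne 0$. When $\mu=0$, the first integral $Z_0$ is unbounded but, by Proposition~2.1, maps $\R^2\baks\{0\}$ into the closed upper half plane $\C^+$; composing with a bounded holomorphic map of $\C^+$, for instance the Cayley transform $H(w)=(w-i)/(w+i)$ (holomorphic on $\C\baks\{-i\}\supset\C^+$ and of modulus $\le 1$ there), produces $u=H\circ Z_0$, which is smooth on $\R^2\baks\{0\}$, satisfies $L(H\circ Z_0)=(H'\circ Z_0)\,LZ_0=0$, is bounded by $1$, and is non-constant. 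In either case the Liouville property fails.

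For the direct direction assume $\re{\mu}>0$ and let $u\in C^0(\R^2\baks\{0\})$ be bounded with $Lu=0$. A direct computation gives $\abs{Z_\mu(r,\ta)}=r^{\re{1/\mu}}\ei{-\phi_2(\ta)}$, which never vanishes for $r>0$, so together with Proposition~2.2 one gets $Z_\mu(\R^2\baks\{0\})=\C\baks\{0\}$. The principle of constancy on the fibers (the Baouendi--Treves approximation formula, as used in Theorems~3.2 and~3.3) represents $u$ locally as $H_p\circ Z_\mu$ with $H_p$ holomorphic; analytic continuation produces an a priori multi-valued holomorphic function $H$ on $\C\baks\{0\}$. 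The crucial point is that $H$ is in fact single-valued: a generator of $\pi_1(\R^2\baks\{0\})$ is a small circle $C(0,\ep)$, whose image $\Gamma_\ep=\ep^{1/\mu}\ei{i(\ta+\phi(\ta))}$ has winding number $1$ about $0$ (the net increment of the argument $\ta+\phi_1(\ta)$ over a period is $2\pi$, independently of $\condp$); since the continuation of $H$ along $\Gamma_\ep$ is realized by the continuation of the single-valued function $u$ around $C(0,\ep)$, the monodromy of $H$ about $0$ is trivial and $H$ descends to a genuine holomorphic function on $\C\baks\{0\}$. Because $Z_\mu$ is onto $\C\baks\{0\}$ and $H(Z_\mu(p))=u(p)$, the bound $\abs{H}\le\sup\abs{u}$ holds on $\C\baks\{0\}$.

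It then remains to invoke classical complex analysis: being bounded and holomorphic near the puncture, $H$ has a removable singularity at $0$ (Riemann), hence extends to a bounded entire function, which is constant by the classical Liouville theorem; therefore $u=H\circ Z_\mu$ is constant. The main obstacle is precisely the single-valuedness of $H$ over the non-simply-connected punctured plane $\C\baks\{0\}$---the one place where the global topology of $Z_\mu$ enters---and I would handle it exactly by the winding-number/monodromy argument above, which mirrors the reasoning in the proof of Theorem~3.3 but now must be run globally rather than on a simply connected image.
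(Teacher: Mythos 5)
Your proof is correct and follows essentially the same route as the paper's: reduce via the global first integral $Z_\mu$ to classical complex analysis (Liouville's theorem for the case $\re{\mu}>0$; explicit bounded nonconstant holomorphic functions composed with $Z_0$ or $Z_{i\beta}$ for the case $\re{\mu}=0$). You additionally fill in two details the paper leaves implicit---the monodromy argument giving single-valuedness of $H$ on $\C\baks\{0\}$ and the Riemann removable-singularity step needed before invoking Liouville---so your write-up is a more careful rendering of the same argument rather than a different one.
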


\begin{proof}
Suppose that $\re{\mu}>0$. Then $Z_\mu(R^2)=\C$. It follows that for every $p\in\R^2\baks\{0\}$,
$Z_\mu(p)$ is an interior point of $Z_\mu(\R^2\baks\{0\})$ and consequently, if $u\in C^0(\R^2\baks\{0\})$
solves (3.1), then $u\in\cinf(\R^2\baks\{0\})$ and $u=H\circ Z_\mu$ with $H$ an entire function.
If, in addition, $u$ is bounded, then $H$ is constant.

If $\mu =0$, then $Z_0(\R^2)$ is the upper half plane in $\C$. If $H$ is any nonconstant, bounded,
holomorphic function defined in the upper half plane, then $u=H\circ Z_0$ is nonconstant, bounded
and smooth solution of (3.1) in $\R^2$. If $\mu =i\beta$ with $\beta\ne 0$, the same argument
applies by replacing the half plane by the annulus $\A(m,M)$.
\end{proof}

\section{Examples}

\noindent{\bf 1.} Let
\[
L=r^{\lam-1}L_0=r^{\lam-1}\left[\pt -\left(\cos\ta+i\frac{\pi}{2}\sin\ta\right)r\pr\right]\, .
\]
We have $\mu =0$ and the first integral is $\dis Z_0=r\ei{\sin\ta}
\exp{\left[i\frac{\pi-\pi\cos\ta}{2}\right]}$.
The characteristic set $\Si$ consists of the rays $\ta =0$ and $\ta =\pi$. The image of the
unit disc $D$  is the region in the upper half plane $\mathrm{Im}z \ge 0$ whose
boundary consists of the curve $\dis z=\ei{\sin\ta}
\exp{\left[i\frac{\pi-\pi\cos\ta}{2}\right]}$ for $0\le \ta\le \pi$ and the segment $[-1,\ 1]$.
\begin{figure}[ht]
 \centering
\scalebox{0.27} {\includegraphics{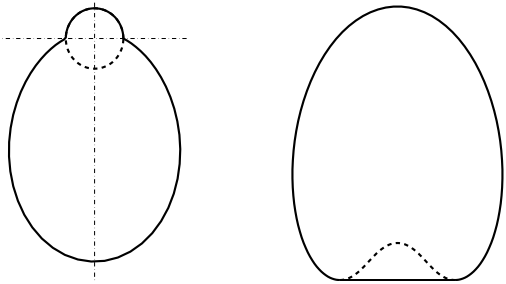}}
\caption{\textit{The unit disc, its envelope, and their image}}
\end{figure}
Note the lower semicircle $x^2+y^2=1$, $y < 0$, is mapped via $Z_0$ into the interior of
$Z_0(D)$. Hence, any continuous solution in $D$ extends through the lower semicircle.
In fact,  the function $\rho$ (defined in (3.6)) in this case is given by
$\rho(\ta)=\ei{\abs{\sin\ta}}$. The envelope $D_L$ of $D$ is the interior of the curve
given by $\ei{\left(\abs{\sin\ta}-\sin\ta\right)}\,\ei{i\ta}$ (see Figure 1.)

\vspace{.2cm}

\noindent{\bf 2.} Let $k\in\Z^+$,  $\mu\in \R^++i\R$, and
\[
L=r^{\lam-1}\left[ \pt-i\mu (1+k\cos k\ta)r\pr\right]\, .
\]
The first integral is $Z_\mu =r^{1/\mu}\ei{i(\ta+\sin(k\ta))}$ and the characteristic set
$\Si$ consists of the $2k$ rays given by the roots of the equation $1+k\cos k\ta =0$.
Note that $L$ does not satisfy $\condp$ at any point of $\Si$. The image, via $Z_\mu$, of the unit disc
$D$ is the unit disc. If follows that if $u\in C^0(D)$ solves $Lu=0$, then $u\in \cinf(D\baks\{0\})$.

\vspace{.2cm}

\noindent{\bf 3.} Let
\[
L=r^{\lam-1}\left[\pt-i\left(2\cos 2\ta-2\sin 4\ta+i\right)r\pr\right]\, .
\]
We have $\mu =i$ and the first integral is
\[
Z_i =r^{1/i}\exp\left[ \sin 2\ta +\frac{1}{2}\cos 4\ta\right]\, \ei{i\ta}\, .
\]
\begin{figure}[ht]
 \centering
\scalebox{0.27} {\includegraphics{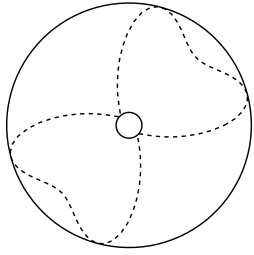}}
\caption{\textit{The annulus $\A(\ei{-3/2},\ei{3/4})$ and the image of the unit circle
(dotted curve)}}
\end{figure}

The characteristic set $\Si$ consists of eight rays:
\[
\ta =\frac{\pi}{4};\ \frac{3\pi}{4};\ \frac{5\pi}{4};\ \frac{7\pi}{4};\
\frac{\pi}{12};\ \frac{5\pi}{12};\ \frac{13\pi}{12};\ \frac{17\pi}{12}\, .
\]
The vector field $L$ does not satisfy $\condp$ on any point of $\Si$. The function $\abs{Z_i}$ reaches
a maximum $\ei{3/4}$ on the four rays
$\dis\ta=\frac{\pi}{12};\ \frac{5\pi}{12};\ \frac{13\pi}{12};\ \frac{17\pi}{12}$;
and it reaches a minimum $\ei{-3/2}$ on the two rays $\dis \ta =\frac{3\pi}{4};\
\frac{7\pi}{4}$. We have then $Z_i(\R^2\baks\{0\})=\A(\ei{-3/2},\ei{3/4})$.
Hence, any $C^0$ solution of $Lu=0$ in a region containing an annulus $\A (r_1,r_2)$
with $r_2>r_1\ei{2\pi}$ extends to a continuous solution $\widehat{u}$ in $\R^2\baks\{0\}$.
Furthermore, $\widehat{u}$ is $\cinf$ everywhere except possibly on the rays where $\abs{Z_i}$
reaches its extreme values. In particular, $\widehat{u}$ is $\cinf$ on the rays
$\dis\ta=\frac{\pi}{4};\ \frac{5\pi}{4}$ even though $L$ does not satisfy $\condp$ along
these rays.

\section{The equation $Lu=f$ with $f$ homogeneous}
In this section, we consider the equation
\begeq
Lu=f
\stopeq
with $f$ a homogeneous function of degree $\sigma \in \C$. We have following
theorem

\begin{theorem} Let $l\ge0$
and $f\in C^l(\R^2\baks\{0\})$ be $\sigma$-homogeneous with
$\re{\sigma}>0$. Let $\mu =\mu(L)$ and assume that
\begeq
\mu (\lam-\sigma -1)\notin\Z \, .
\stopeq
Then equation $(5.1)$ has a distribution solution $u\in\distr(\R^2) $, with
$u$ homogeneous with degree $\sigma +1-\lam$ and such that
$u\in C^{l+1}(\R^2\baks\{0\})$. In particular, if $\re{\sigma}>\re{\lam}-1$,
then $u$ is H\"{o}lder continuous at 0.
\end{theorem}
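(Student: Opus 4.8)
The plan is to separate variables in polar coordinates, which collapses (5.1) into a single first–order linear ODE on the circle whose periodic solvability is governed \emph{exactly} by the arithmetic condition (5.2), and then to promote the resulting function to a homogeneous distribution across the singular point. First I would search for a solution of the predicted homogeneity, writing $u(r,\ta)=r^{\sigma+1-\lam}v(\ta)$ and $f(r,\ta)=r^\sigma g(\ta)$ with $g\in C^l(\cir)$. Using the polar form $L=r^{\lam-1}(p(\ta)\pt-iq(\ta)r\pr)$ from (1.2), a direct differentiation gives
\begeq
Lu=r^\sigma\left[p(\ta)v'(\ta)-i(\sigma+1-\lam)q(\ta)v(\ta)\right],
\stopeq
so that, off the origin, $Lu=f$ is equivalent to $pv'-i(\sigma+1-\lam)qv=g$. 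Since $p(\ta)\ne0$ everywhere by $(\mathcal{C}2'')$, this is the regular equation
\begeq
v'(\ta)-i(\sigma+1-\lam)\frac{q(\ta)}{p(\ta)}v(\ta)=\frac{g(\ta)}{p(\ta)}.
\stopeq

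Next I would solve this on $\cir$ by the integrating factor $\exp\!\big(-i(\sigma+1-\lam)\int_0^\ta q/p\big)$. The associated homogeneous solution is multiplied by the monodromy factor $\exp\!\big(2\pi i\mu(\sigma+1-\lam)\big)$ over one period, because $\frac{1}{2\pi}\int_0^{2\pi}\frac{q}{p}\,d\ta=\mu$ by (2.1). Hence a $2\pi$–periodic solution $v$ exists, and is unique, precisely when this factor is not $1$, i.e. when $\mu(\sigma+1-\lam)\notin\Z$; since $\sigma+1-\lam=-(\lam-\sigma-1)$ and $\Z$ is symmetric, this is exactly hypothesis (5.2). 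The integrating–factor formula then produces $v$ explicitly, and because integrating a $C^l$ integrand against a smooth kernel raises regularity by one order, $g\in C^l$ yields $v\in C^{l+1}(\cir)$, hence $u\in C^{l+1}(\R^2\baks\{0\})$, homogeneous of degree $\sigma+1-\lam$ and solving (5.1) on $\R^2\baks\{0\}$.

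The last and most delicate step is to read $u$ as an element of $\distr(\R^2)$ and to see that $Lu=f$ persists across $0$. When $\re\sigma>\re\lam-3$ the degree $\re{\sigma+1-\lam}$ exceeds $-2$, so $u$ is locally integrable and defines a distribution directly; in general $u$ must be defined by the standard regularization of a homogeneous function (subtracting finitely many Taylor terms of the test function, equivalently analytic continuation in the degree), while noting that the coefficients $A,B$ vanish to order $[\re\lam]$ at $0$ and so act as multipliers on $u$. Granting a \emph{homogeneous} distribution $u$ of degree $\sigma+1-\lam$, the conclusion is clean: $T:=Lu-f$ vanishes on $\R^2\baks\{0\}$, hence is supported at $\{0\}$ and is a finite combination $\sum_\gamma c_\gamma\pa^\gamma\delta$; but homogeneity of $u$ forces $Lu$, and thus $T$, to be homogeneous of degree $\sigma$, whereas each $\pa^\gamma\delta$ is homogeneous of degree $-2-\abs{\gamma}$ with real part $\le-2<0<\re\sigma$. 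A nonzero distribution cannot be homogeneous of two distinct degrees, so every $c_\gamma=0$ and $Lu=f$ in $\distr(\R^2)$. Finally, if $\re\sigma>\re\lam-1$ the degree has positive real part, so $u=r^{\sigma+1-\lam}v(\ta)\to0$ at $0$ and, being $C^{l+1}$ and of positive homogeneity off the origin, is H\"older continuous there.

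I expect the genuine obstacle to be exactly this distributional extension across the singular point. The ODE step is routine and pins down (5.2) at once, and the support–plus–degree argument disposes of any delta–contribution for free; the real work lies in making $u$ a well–defined homogeneous distribution when $\re{\sigma+1-\lam}$ is very negative, and above all at the critical integer degrees $\sigma+1-\lam\in\{-2,-3,\dots\}$, where one must check that the angular profile $v$ does not obstruct a homogeneous extension and that $A,B$ are smooth enough at $0$ to let $L$ act.
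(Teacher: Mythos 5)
Your reduction to the ODE on the circle is exactly the paper's proof: the same integrating factor $\exp\bigl(-i(\sigma+1-\lam)\int_0^\ta q/p\bigr)$, the same monodromy factor $\ei{2\pi i\mu(\sigma+1-\lam)}$ whose being $\ne 1$ is precisely (5.2) and yields a unique periodic $v$ (the paper's (5.4)--(5.6)), and the same bootstrap giving $v\in C^{l+1}(\cir)$. Where you genuinely diverge is the distributional step. The paper \emph{constructs} the extension by an explicit formula (its (5.12)): iterated integration by parts in $r$, pairing $r^{k-(\lam-\sigma)}v(\ta)$ against $\pa^{k-2}\Phi/\pa r^{k-2}$ with $k=[\re{\lam-\sigma}]$ and $C_0=\Gamma(\lam-\sigma-k)/\Gamma(\lam-\sigma-2)$, and then verifies $\<Lu,\Phi\>=\<f,\Phi\>$ by direct computation ((5.9)--(5.11)). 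You instead invoke the general extension theory for homogeneous distributions and dispose of the verification with the support-plus-degree argument: $T=Lu-f$ vanishes off $0$, hence is a finite sum $\sum_\gamma c_\gamma\pa^\gamma\delta$, and no nonzero such sum can be homogeneous of degree $\sigma$ with $\re{\sigma}>0$, since each $\pa^\gamma\delta$ has degree $-2-\abs{\gamma}$. That argument is correct (granted a homogeneous extension, and granted that $Lu$ makes sense, which your remark on the finite smoothness of $A,B$ at $0$ is meant to cover), and it is cleaner than the paper's computational verification.

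The gap is the step you yourself flag and defer: existence of the homogeneous extension. Off the critical degrees, i.e.\ when $\sigma+1-\lam\notin\{-2,-3,-4,\dots\}$, this is the classical theorem on homogeneous distributions (unique homogeneous extension across the origin), so a citation closes your proof on that range; there your route is a legitimate, and arguably tidier, alternative to (5.12). But at the critical degrees --- $\lam-\sigma$ an integer $\ge 3$, which the hypotheses of the theorem do \emph{not} exclude --- the obstruction you mention is generically nonzero and the step cannot be completed by anyone. Concretely, take $L=r^{\lam-1}\left(\pt-i\sqrt{2}\,r\pr\right)$ with $\lam=4$ (so $p\equiv1$, $q\equiv\mu=\sqrt{2}$, and $(\mathcal{C}1)$, $(\mathcal{C}2)$ hold) and $f=r$, i.e.\ $\sigma=1$, $f_0\equiv1$; then $\mu(\lam-\sigma-1)=2\sqrt{2}\notin\Z$, the unique periodic solution of (5.5) is the constant $v\equiv 1/(2\sqrt{2}\,i)$, so the only candidate is $u=c\,r^{-2}$ on $\R^2\baks\{0\}$ with $c=1/(2\sqrt{2}\,i)\ne0$. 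This admits no homogeneous extension: for radial $\Phi=\chi(r)$ with $\chi\equiv1$ near $0$, the difference $\Phi(\cdot/s)-\Phi$ is supported away from $0$, so \emph{any} extension $w$ satisfies $\<w,\Phi(\cdot/s)\>-\<w,\Phi\>=2\pi c\log s$, whereas homogeneity of degree $-2$ forces that difference to vanish. So at these degrees the statement itself fails, and every proof must break --- including the paper's, whose constant $C_0=\Gamma(\lam-\sigma-k)/\Gamma(\lam-\sigma-2)$ is meaningless there because $\lam-\sigma-k=0$ is a pole of $\Gamma$. In short: modulo citing the standard extension theorem, your argument is complete exactly on the non-critical range, and the hole you identified at integer $\lam-\sigma\ge3$ is real but is a defect of the theorem as stated (shared, silently, by the paper's own proof), not of your approach.
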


\begin{proof}
Since $f$ is homogeneous, then $f(r,\ta)=r^\sigma f_0(\ta)$ with $f_0\in C^l(\cir)$.
Let
\begeq
\psi(\ta)=\int_0^\ta\frac{q(s)}{p(s)}\, ds
\stopeq
so that $\psi(2\pi)=2\pi\mu$. The function
\begeq
v(\ta)=\left[K+\int_0^\ta\frac{f_0(s)}{p(s)}\exp\left(-i(\sigma+1-\lam)\psi(s)\right)
\, ds\right]\exp\left(i(\sigma+1-\lam)\psi(\ta)\right)
\stopeq
with $K\in\C$ is the general solution of the following ODE
\begeq
p(\ta)v'(\ta)-i(\sigma+1-\lam)q(\ta)v(\ta)=f_0(\ta)\, .
\stopeq
Since, $\ei{2\pi i\mu(\sigma+1-\lam)}\ne 1$  (by (5.2) and $\psi(2\pi)=2\pi \mu$),
then the only periodic solution of (5.5) is obtained when
\begeq
K=\frac{1}{1-\ei{2\pi i\mu(\sigma+1-\lam)}}
\int_0^{2\pi}\frac{f_0(s)}{p(s)}\exp\left(-i(\sigma+1-\lam)\psi(s)\right)\, ds\, .
\stopeq

With this value of $K$, let
\begeq
u(r,\ta)=r^{\sigma+1-\lam}v(\ta)\, .
\stopeq
Then, $u\in C^{l+1}(\R^2\baks\{0\})$ and it is H\"{o}lder continuous at 0 if $\re{\sigma}>\re{\lam}-1$.
A simple calculation shows that $u$ solves (5.1) in $\R^2\baks\{0\}$. Now we show that $u$ solves
(5.1) in $\R^2$ in the sense of distributions. Observe that when $L$ is given by (1.2), then its transpose is
\begeq\begar{ll}
^tL & =-(L+\mathrm{div}L)\\ \\
& =-r^{\lam-1}\left[p(\ta)\pt -iq(\ta)r\pr -i(\lam+1)q(\ta)+p'(\ta)\right]\, .
\stopar\stopeq
Consider the case $\re{\sigma}>\re{\lam}-2$, so that $u\in L^1(\R^2)$. Then, for $\Phi\in C_0^\infty (\R^2)$
a test function, we have
\begeq\begar{ll}
\< Lu,\Phi\> & = \<u,\, ^tL\Phi\>\\
&=\dis -\int_0^\infty\!\!\!\int_0^{2\pi}\!\!\!r^{\sigma+1}v
\left(p\Phi_\ta-irq\Phi_r-i(\lam+1)q\Phi +p'\Phi\right)\, d\ta dr\, .
\stopar\stopeq
We use integration by parts together with (5.5) to get
\begeq\begar{ll}
\dis\int_0^{2\pi}\!\!\! p(\ta)v(\ta)\Phi_\ta d\ta & =\dis
-\int_0^{2\pi}\!\!\!
\left[ p'v+i(\sigma+1-\lam)qv+f_0\right]\Phi\, d\ta\\ \\
\dis\int_0^\infty\!\!\! r^{\sigma+2}\Phi_rdr&=\dis-\int_0^\infty\!\!\!
 (\sigma+2)r^{\sigma+1}\Phi dr\, .
\stopar\stopeq
After using (5.10) in (5.9), we obtain
\begeq
\<Lu,\Phi\> =\int_0^\infty\!\!\!\int_0^{2\pi}\!\!\!
r^{\sigma+1}f_0(\ta)\Phi(r,\ta)d\ta dr =\<f,\Phi\>
\stopeq
The theorem is proved in this case.

Next, suppose that $0<\re{\sigma}\le\re{\lam}-2$. Let $k=[\re{\lam-\sigma}]$,
where $[x]$ denotes the greatest integer $\le x$. Define $u\in\distr(\R^2)$ by
\begeq
\<u,\Phi\>=C_0\int_0^\infty\!\!\!\int_0^{2\pi}\!\!\!
r^{k-(\lam-\sigma)}v(\ta)\dd{^{\, k-2}\Phi}{r^{k-2}}\, d\ta dr\, ,
\quad \Phi\in\ccinf(\R^2)
\stopeq
where
\[
C_0=\frac{\Gamma(\lam-\sigma-k)}{\Gamma(\lam-\sigma-2)}
\]
and $\Gamma$ is the gamma function. Since we are using  derivatives with the polar coordinate,
we need  to verify that (5.12) indeed
defines  a distribution.
We have
\[
\dd{^{\, n}}{r^n}=\sum_{j=0}^n\Big(\,^n_j\Big)\cos^{n-j}\ta\sin^j\ta\dd{^{\,n}}{x^{n-j}\pa y^j}
\]
(this formula can be established by induction), and for a compact set $K\subset\R^2$ with
$\supp\Phi \subset K$, we have
\[
\abs{\<u,\Phi\>} \,\le\, C(K)\, \sum_{j=0}^{k-2}\sup_K\left|\dd{^{k-2}\Phi}{x^{k-2-j}\pa y^j}\right|
\]
where $C(K)$ is a constant depending only on $K$, $v$, $\sigma$, and $\lam$. To verify the homogeneity
of $u$, define the distribution $u_s\in\distr(\R^2)$ (with $s>0$) by
$\<u_s,\Phi\>=s^{-2}\<u,\Phi (x/s,y/s)\>$. We have
\[
\dd{^{\,k-2}}{r^{k-2}}\left(\Phi(\frac{r}{s},\ta)\right)
=\frac{1}{s^{k-2}}\dd{^{k-2}\Phi}{r^{k-2}}\left(\frac{r}{s},\ta\right)
\]
and the change of variable $\rho =r/s$ in the integral defining $u_s$ gives
\[\begar{ll}
\<u_s,\Phi\> & =\dis s^{-2}C_0\int_0^\infty\!\!\!\int_0^{2\pi}\!\!\!
s^{\sigma-\lam+3}\rho^{k-(\lam-\sigma)}v(\ta)
\dd{^{\, k-2}\Phi}{\rho^{k-2}}(\rho,\ta)\, d\ta d\rho\\
& =s^{\sigma-\lam+1}\<u,\Phi\>\, .
\stopar\]
This shows the homogeneity of $u$. Now we verify that $u$ solves (5.1).
The function $^tL\Phi$ has compact support and vanishes to order $\re{\lam}-1$ at the origin
($r=0$). We can use integration by parts to obtain
\[\begar{ll}
\<Lu,\Phi\> & = \<u,\, ^tL\Phi\> =\dis
C_0\int_0^\infty\!\!\!\int_0^{2\pi}\!\!\!
r^{k-(\lam-\sigma)}v\dd{^{\, k-2}{\, ^tL\Phi}}{r^{k-2}}\, d\ta dr\\
& =\dis C_1\int_0^\infty\!\!\!\int_0^{2\pi}\!\!\!
r^{k-(\lam-\sigma)-1}v\dd{^{\, k-3}{\, ^tL\Phi}}{r^{k-3}}\, d\ta dr
\stopar\]
with
\[
C_1=(\lam-\sigma-k)C_0=\frac{\Gamma(\lam-\sigma-k+1)}{\Gamma(\lam-\sigma-2)}\, .
\]
In general after $j$ integrations by parts (with $j\le k-2$) we get
\[
\<Lu,\Phi\>=\frac{\Gamma(\lam-\sigma-k+j)}{\Gamma(\lam-\sigma-2)}\,
\int_0^\infty\!\!\!\int_0^{2\pi}\!\!\!
r^{k-(\lam-\sigma)-j}v\dd{^{\, k-2-j}{\ ^tL\Phi}}{r^{k-2-j}}\, d\ta dr\, .
\]
In particular, for $j=k-2$, we obtain
\[
\<Lu,\Phi\> =\int_0^\infty\!\!\!\int_0^{2\pi}\!\!\!
r^{\sigma-\lam+2}v(\ta)\, ^tL\Phi \, d\ta dr\, .
\]
Finally, a calculation similar to that carried out in the case $\re{\sigma}>\re{\lam}-2$, gives
\[
\<Lu,\Phi\> =\int\!\!\int_{\R^2}f(x,y)\Phi(x,y)dxdy\, .
\]
\end{proof}

When (5.2) does not hold, then equation (5.1) can still be solved, provided that $f$
satisfies a compatibility condition.

\begin{theorem}
Suppose that $\mu(\sigma+1-\lam)\in\Z$. If $f(r,\ta)=r^\sigma f_0(\ta)$ satisfies
\begeq
\int_0^{2\pi}\!\!\!\frac{f_0(s)}{p(s)}\exp\left[-i\mu(\sigma+1-\lam)\psi(s)\right]\, ds
=0\, ,
\stopeq
where $\psi$ is given by $(5.3)$,
then equation $(5.1)$ has a homogeneous distribution solution as that given in Theorem $5.1$.
\end{theorem}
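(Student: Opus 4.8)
The plan is to follow the proof of Theorem 5.1 almost verbatim, the only new ingredient being the replacement of the explicit choice of $K$ in (5.6) by the compatibility condition (5.13). As before, everything reduces to producing a $2\pi$-periodic solution $v$ of the ODE (5.5); once such a $v\in C^{l+1}(\cir)$ is in hand, the function $u(r,\ta)=r^{\sigma+1-\lam}v(\ta)$ of (5.7) is $(\sigma+1-\lam)$-homogeneous, and the verification that $Lu=f$, both classically in $\R^2\baks\{0\}$ and distributionally in $\R^2$, is identical to the corresponding computation in Theorem 5.1.

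First I would analyze the periodic solvability of (5.5). The general solution is still given by (5.4), and periodicity amounts to $v(2\pi)=v(0)$ (the matching of higher derivatives being automatic from the $2\pi$-periodicity of $p,q,f_0$ together with the equation). Since we are now in the resonant case $\mu(\sigma+1-\lam)\in\Z$, we have $\ei{2\pi i\mu(\sigma+1-\lam)}=\ei{i(\sigma+1-\lam)\psi(2\pi)}=1$, so the homogeneous equation already admits the $2\pi$-periodic solution $v_{\mathrm{hom}}(\ta)=\exp\!\left(i(\sigma+1-\lam)\psi(\ta)\right)$. Evaluating (5.4) at $\ta=2\pi$ and using this identity, the monodromy condition $v(2\pi)=v(0)$ collapses to the requirement that the integral $\int_0^{2\pi}\frac{f_0(s)}{p(s)}\exp\!\left(-i(\sigma+1-\lam)\psi(s)\right)ds$ — the numerator appearing in (5.6) — vanish, which is precisely the compatibility condition (5.13). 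This is the Fredholm alternative for the periodic problem (5.5), and recognizing (5.13) as the exact solvability condition is the one genuinely new point of the argument.

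Next, assuming (5.13), I would observe that (5.4) is then $2\pi$-periodic for \emph{every} value of $K$: the particular integral closes up by (5.13), while the homogeneous part $K v_{\mathrm{hom}}$ is periodic by resonance. Fixing, say, $K=0$ yields a periodic $v\in C^{l+1}(\cir)$, hence $u=r^{\sigma+1-\lam}v(\ta)\in C^{l+1}(\R^2\baks\{0\})$, Hölder continuous at $0$ when $\re{\sigma}>\re{\lam}-1$. The resulting non-uniqueness (the freedom in $K$) is harmless: it adds the homogeneous part $K\,r^{\sigma+1-\lam}v_{\mathrm{hom}}$, which is a $(\sigma+1-\lam)$-homogeneous solution of $Lu=0$ — a power of the relevant first integral of Section 2 (in the case $\re{\mu}>0$, a constant multiple of $Z_\mu^{\,\mu(\sigma+1-\lam)}$, with $\mu(\sigma+1-\lam)\in\Z$; cf. Proposition 3.2).

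Finally, with $u$ so defined I would carry over, word for word, the distributional verification from Theorem 5.1. That argument used only the ODE (5.5), the homogeneity of $u$, and the transpose formula (5.8): for $\re{\sigma}>\re{\lam}-2$ one has $u\in L^1_{\mathrm{loc}}$ and the two integrations by parts in (5.10) give $\<Lu,\Phi\>=\<f,\Phi\>$; for $0<\re{\sigma}\le\re{\lam}-2$ one defines $u$ by the regularized pairing (5.12) with $k=[\re{\lam-\sigma}]$ and the gamma factor $C_0$, checks that it is a homogeneous distribution, and recovers $\<Lu,\Phi\>=\<f,\Phi\>$ after $k-2$ integrations by parts. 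None of these steps invoked the nonresonance hypothesis $\mu(\lam-\sigma-1)\notin\Z$, so they apply unchanged. The main obstacle is therefore concentrated entirely in the first step — identifying (5.13) as the precise closing-up condition for the resonant periodic ODE — while everything downstream is inherited from Theorem 5.1.
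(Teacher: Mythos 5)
Your proposal is correct and follows essentially the same route as the paper, whose entire proof is the remark that the argument of Theorem 5.1 goes through verbatim once $v$ is defined by (5.4) with $K=0$; your added explanation that resonance makes the homogeneous solution periodic, so that periodicity of (5.4) is equivalent to the vanishing of the numerator in (5.6) (the Fredholm alternative), is exactly the point the paper leaves implicit. One cosmetic note: the paper's displayed condition (5.13) carries a spurious factor $\mu$ in the exponent (compare (5.6) and (6.23)); your reading of it as the numerator of (5.6) is the correct one.
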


The proof is identical to that of Theorem 5.1. This time we define $v$ by (5.4) with constant
$K=0$.

Now, for a class of vector fields,
 we consider equation (5.1) when $f=\delta$,  the Dirac distribution.
 More precisely, we have the following result.

\begin{theorem}
Suppose that $\mathrm{div}L=0$; the homogeneity degree of $L$ is $\lam=N\in \Z^+$; and
that $\mu\ne 0$. Then, the distribution $u\in\distr(\R^2)$ given by
\begeq
\<u,\Phi\> =\frac{1}{2\pi i\mu(N-1)!}
\int_0^\infty\!\!\!\int_0^{2\pi}\!\!\!\frac{\ln r}{p(\ta)}\dd{\,^N\Phi}{r^N}\, d\ta dr\,,\quad
\Phi\in\ccinf(\R^2)
\stopeq
satisfies the equation $Lu=\delta$.
\end{theorem}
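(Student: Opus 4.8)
The plan is to verify directly that the distribution $u$ defined by (5.15) satisfies $\langle Lu, \Phi\rangle = \langle \delta, \Phi\rangle = \Phi(0)$ for every test function $\Phi \in \ccinf(\R^2)$. Since $\mathrm{div}\,L = 0$, the transpose simplifies to $^tL = -L$, and with $\lam = N$ the expression (1.2) becomes $L = r^{N-1}(p(\ta)\pt - iq(\ta)r\pr)$. I would first establish that (5.15) genuinely defines a distribution by the same argument as in Theorem 5.1: rewrite the polar radial derivatives $\pa^N/\pa r^N$ in terms of Cartesian derivatives via the induction formula already used in the proof of Theorem 5.1, and note that $\ln r$ together with the weight $1/p(\ta)$ (which is bounded and smooth by condition $(\mathcal{C}2\,'')$) gives a locally integrable kernel against the $N$-th derivatives of $\Phi$.

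The main computation is to evaluate $\langle Lu, \Phi\rangle = \langle u, {}^tL\Phi\rangle = -\langle u, L\Phi\rangle$ and integrate by parts in both $r$ and $\ta$. I would substitute $L\Phi = r^{N-1}(p\,\Phi_\ta - iq\,r\Phi_r)$ into the defining integral, differentiate $N$ times in $r$, and integrate by parts to transfer derivatives off $\Phi$. The key structural point is that $^tL$ applied to a test function produces something vanishing to order $N-1$ at the origin, so boundary terms at $r=0$ drop out and only the contribution from the logarithmic singularity survives. The $\ta$-integration by parts will use that $p(\ta)$ and $q(\ta)$ are $2\pi$-periodic, eliminating the angular boundary terms, and the number $\mu = \frac{1}{2\pi}\int_0^{2\pi}\frac{q}{p}\,d\ta$ should emerge precisely from the angular integral of the coefficient generated by the $\pt$ term.

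I expect the decisive step to be isolating the origin contribution: after the integrations by parts, the bulk integral should cancel (reflecting $L(\,\text{smooth homogeneous of the right degree}) = 0$, consistent with Proposition 3.1), and the delta should arise as a residue from differentiating $\ln r$, since $\pa_r \ln r = 1/r$ and $r^{N-1}\cdot r \cdot (1/r)$ interacts with the $N$-fold differentiation to leave a finite boundary contribution localized at $r=0$. Concretely, the factor $1/(2\pi i \mu (N-1)!)$ is calibrated so that the $\ln r$ kernel, after the radial derivatives land on it, reconstructs $\Phi(0)$ exactly; I would track the constant through the repeated integration by parts (as in the $j$-fold iteration of Theorem 5.1) to confirm the normalization.

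The hard part will be handling the logarithm rigorously: unlike the pure power $r^{k-(\lam-\sigma)}$ in Theorem 5.1, here $\ln r$ forces careful treatment of the near-origin limit, and one must justify that the apparent singularity integrates to the point mass rather than producing an extra constant or a principal-value correction. I would approach this by regularizing with a cutoff at $r = \ep$, carrying out the integrations by parts on $\{r \ge \ep\}$, tracking the $\ep$-dependent boundary terms, and showing that as $\ep \to 0$ the only surviving limit is $\Phi(0)$ while all other boundary contributions vanish by the order-of-vanishing of $^tL\Phi$ at the origin.
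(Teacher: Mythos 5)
Your proposal is correct and is essentially the paper's own proof: using $\mathrm{div}L=0$ to get ${}^tL=-L$, substituting $L\Phi=r^{N-1}\left(p\,\Phi_\ta-iq\,r\Phi_r\right)$ into the defining integral, killing the angular term by $2\pi$-periodicity, and collapsing the radial term by $N$ integrations by parts via the identity $\int_0^\infty \ln r\,\pa_r^N\!\left(r^N\Phi\right)dr=-(N-1)!\int_0^\infty\Phi\,dr$ (your $\ep$-cutoff argument is just a careful version of this, the boundary terms vanishing because $r^N\Phi=O(r^N)$ at the origin). One detail in your sketch is flipped: $\mu$ does not emerge from the $\pt$ term --- that term contributes nothing, since $p(\ta)$ cancels against the $1/p(\ta)$ in the kernel and the $\ta$-integral of the derivative of a periodic function vanishes --- rather it comes from the radial term, where the kernel's $1/p$ meets the coefficient $q$ of $-iq\,r\pr$, the radial integrations by parts produce $(N-1)!\,\Phi(0)$ for each fixed $\ta$, and the leftover angular integral $\int_0^{2\pi}(q/p)\,d\ta=2\pi\mu$ cancels the normalizing constant.
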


\begin{proof}
For a test function $\Phi$, we have $r^N\Phi =O(r^N)$ as $r\,\longrightarrow\, 0$,
and integration by parts gives
\[
\int_0^\infty\!\!\!
\ln r\,\dd{\,^N(r^N\Phi)}{r^N}\, dr =-
\int_0^\infty\!\!\! r^{-1}\,\dd{\,^{N-1}(r^N\Phi)}{r^{N-1}}\, dr\, .
\]
By repeated integration by parts, $N$ times, we get
\begeq
\int_0^\infty\!\!\!
\ln r\,\dd{\,^N(r^N\Phi)}{r^N}\, dr =-(N-1)!\int_0^\infty\!\!\!
 \Phi(r,\ta)\, dr\, .
\stopeq
Since $\mathrm{div}L=0$, then  $\<Lu,\Phi\>=-\<u,L\Phi\> $ and
\begeq
\<Lu,\Phi\> =\dis\frac{-1}{2\pi i\mu(N-1)!}\!\int_0^\infty\!\!\!\!\int_0^{2\pi}
\!\!\!\ln r\dd{\,^N(r^{N-1}\Phi_\ta)}{r^N}-i\frac{q(\ta)}{p(\ta)}\ln r
\dd{\,^N(r^{N}\Phi_r)}{r^N}\, d\ta dr
\stopeq
We have
\begeq
\int_0^{2\pi}\!\!\dd{\,^N(r^{N-1}\Phi_\ta)}{r^N}\, d\ta=
\int_0^{2\pi}\!\!\left(\dd{\,^N(r^{N-1}\Phi)}{r^N}\right)_\ta\, d\ta \, =0\, .
\stopeq
Expressions  (5.15) and (5.17) give
\begeq
\int_0^\infty\!\!\!\ln r\dd{\,^N(r^N\Phi_r)}{r^N}\, dr =-(N-1)!\int_0^\infty
\!\!\!\Phi_r (r,\ta)dr =(N-1)!\Phi (0)\, .
\stopeq
After using (5.17) and (5.18), (5.16) becomes
\[
\<Lu,\Phi\>=\frac{1}{2\pi\mu}\int_0^{2\pi}\frac{q(\ta)}{p(\ta)}\Phi(0)d\ta =\Phi(0)\, .
\]
\end{proof}

\section{Equation $Lu=f$ with $f$ real analytic}
In this section, we consider the equation
\begeq
Lu=f
\stopeq
with $f$ real analytic in a neighborhood of $0\in\R^2$. We show that this equation
has a distribution solution which is smooth outside  0 when the numbers
$\mu$ and $\lam$ attached to $L$ satisfy a certain diophantine condition.

Let $\mu,\,\lam\, \in\C$ be the numbers associated with $L$, with $\re{\mu}\ge 0$ and $\re{\lam}>1$.
We say that the pair
$(\mu,\lam)$ is \emph{resonant} if $\mu\lam\in\mu\Z^++\Z$. That is, if there exist
$l,k\in \Z$, with $l\ge 1$, such that $\mu\lam =\mu l+k$. We call such a number
 $l$ a \emph{resonant integer}
  for $(\mu,\lam)$. Denote by $\mathbb{J}(\mu,\lam)$ the set of resonant integers
for $(\mu,\lam)$.
Note that when $f$ is a homogeneous function of degree $(j-1)$ and $j$ is a resonant integer for
$(\mu,\lam)$, then the equation $Lu=f$ does not have a homogeneous solution unless $f$
satisfies a compatibility condition.

\begin{lemma}
Suppose that $(\mu,\lam)$ is resonant. Then
\begin{itemize}
\item[1.] If $(\mu,\lam)\not\in \Q^+\times\Q^+$, then
$(\mu,\lam)$ has a unique resonant integer.
\item[2.] If $\mu\in\Q^+$ or if $\lam\in\Q^+$, then
$(\mu,\lam)$ has infinitely many resonant integers.
\end{itemize}
\end{lemma}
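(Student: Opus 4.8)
The plan is to translate the entire statement into the structure of the additive subgroup
\[
H=\{n\in\Z\,;\ \mu n\in\Z\}\subset\Z .
\]
First I would record the elementary reformulation of ``resonant integer'': fixing, by the resonance hypothesis, one pair $(l_0,k_0)$ with $l_0\ge 1$ and $\mu(\lam-l_0)=k_0\in\Z$, an integer $l\ge 1$ lies in $\mathbb{J}(\mu,\lam)$ precisely when $\mu(\lam-l)\in\Z$, and subtracting the relation defining $l_0$ this is equivalent to $\mu(l-l_0)\in\Z$, i.e.\ to $l-l_0\in H$. Hence $\mathbb{J}(\mu,\lam)=(l_0+H)\cap\Z^+$. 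Since $H$ is a subgroup of $\Z$, either $H=\{0\}$ or $H=q\Z$ for a unique integer $q\ge 1$; in the first case $\mathbb{J}(\mu,\lam)=\{l_0\}$ is a singleton, and in the second it is the infinite progression $(l_0+q\Z)\cap\Z^+$. Thus uniqueness versus infinitude is exactly the alternative $H=\{0\}$ versus $H\ne\{0\}$.

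Next I would identify this alternative in terms of $\mu$ and $\lam$. The claim is that $H\ne\{0\}$ if and only if $\mu$ is a (real) rational number: if $\mu n\in\Z$ for some $n\ne 0$ then $\mu=(\mu n)/n\in\Q$, and conversely writing $\mu=p/q$ in lowest terms gives $q\in H$. Because the standing assumption is $\re{\mu}\ge 0$, a rational $\mu$ is nonnegative, so the only way to have $H\ne\{0\}$ with $\mu\ne 0$ is $\mu\in\Q^+$. Finally I would use the resonance relation in the form $\lam=l_0+k_0/\mu$ to move between the two coordinates: if $\mu\in\Q^+$ then $\lam\in\Q$, and $\re{\lam}>1$ forces $\lam\in\Q^+$; conversely if $\lam\in\Q^+$ then $\mu=k_0/(\lam-l_0)\in\Q$ and $\re{\mu}\ge 0$ gives $\mu\in\Q^+$. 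Consequently, under the resonance hypothesis the three conditions $\mu\in\Q^+$, $\lam\in\Q^+$, and $(\mu,\lam)\in\Q^+\times\Q^+$ all coincide.

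Assembling these gives both statements at once. For part (1), the hypothesis $(\mu,\lam)\notin\Q^+\times\Q^+$ yields $\mu\notin\Q^+$, hence $H=\{0\}$, hence $\mathbb{J}(\mu,\lam)=\{l_0\}$ has a single element. For part (2), either hypothesis yields $\mu\in\Q^+$, hence $H=q\Z$ with $q\ge 1$, and the progression $(l_0+q\Z)\cap\Z^+$ is infinite. The one step I expect to require genuine care is the passage between the rationality of $\mu$ and of $\lam$ through the relation $\lam=l_0+k_0/\mu$: this equivalence can degenerate when $\mu=0$ (where every $l\ge 1$ is trivially resonant) or when $k_0=0$, i.e.\ $\lam=l_0\in\Z^+$ (where $\mu$ may be irrational and yet $\lam\in\Q^+$). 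I would therefore verify that these borderline configurations are excluded by the standing hypotheses of this section (in particular $\mu\ne 0$), or else treat them directly, so that the clean coincidence of the $\mu$- and $\lam$-conditions used above is legitimate.
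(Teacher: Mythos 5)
Your core argument is correct and is, at bottom, the same as the paper's. The paper proves part (1) by noting that two pairs $(j_0,k_0)$, $(j_1,k_1)$ with $\mu\lam=\mu j_i+k_i$ give $\mu(j_1-j_0)=k_0-k_1$, hence $\mu\in\Q$, and part (2) by exhibiting the progression $j=j_0+ln$, $k=k_0-lm$ when $\mu=m/n$ in lowest terms. Your subgroup $H=\{n\in\Z\,;\ \mu n\in\Z\}$ is exactly this computation packaged as $\mathbb{J}(\mu,\lam)=(l_0+H)\cap\Z^+$ with $H=\{0\}$ or $H=q\Z$; the two routes differ only in bookkeeping.

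The step you flag as requiring ``genuine care,'' however, is precisely where the paper's proof is defective, so your caution is not a gap in your argument but a discovery of one in the paper's. The paper bridges the $\mu$-condition and the $\lam$-condition with the unproved opening sentence ``if one of the numbers $\mu$ or $\lam$ is in $\Q^+$, then so is the other,'' and this fails exactly in your degenerate case $k_0=0$: take $\mu=\sqrt2$, $\lam=2$. This pair is resonant ($\mu\lam=2\mu+0$, so $l_0=2$, $k_0=0$), satisfies the standing hypotheses $\re{\mu}\ge0$, $\re{\lam}>1$, and has $\lam\in\Q^+$ but $\mu\notin\Q$; by your own analysis $H=\{0\}$ and $\mathbb{J}=\{2\}$ is a singleton. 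Hence the ``$\lam\in\Q^+$'' branch of part (2), as literally stated, is false: it can be neither ``treated directly'' nor deduced from hypotheses of Section 6, which do not exclude this configuration. Similarly $\mu=0$ is not excluded and contradicts part (1), since then every $l\ge1$ is resonant (consistent with Lemma 6.2, which assigns $\mu=0$ infinitely many resonant integers). The honest output of your argument is the clean dichotomy in terms of $\mu$ alone: under resonance, $\mathbb{J}(\mu,\lam)$ is infinite if $\mu\in\Q$ (including $\mu=0$) and is a singleton if $\mu\notin\Q$. That is in fact the form in which the paper later uses the lemma (the remark following (6.23)); the valid implication $\mu\in\Q^+\Rightarrow\lam\in\Q^+$ (via $\lam=l_0+k_0/\mu$ and $\re{\lam}>1$) then recovers part (1) for $\mu\ne0$, while the $\lam$-half of the hypothesis in part (2) is legitimate only under the extra assumption $k_0\ne0$, i.e.\ $\lam$ not equal to its own resonant integer.
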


\begin{proof}
First note that since $(\mu,\lam)$ is resonant, then if one of the numbers $\mu$ or $\lam$
is in $\Q^+$, then so is the other. Let $(j_0,k_0)\in\Z^+\times\Z$ be such that
$\mu\lam=\mu j_0+k_0$. If $(j_1,k_1)\in\Z^+\times\Z$ is any other pair satisfying
$\mu\lam=\mu j_1+k_1$, then $\mu (j_1-j_0)=k_0-k_1$ and $\mu\in\Q$.
Conversely, if $\mu =m/n$ with $m,n\in\Z^+$ and $\gcd (m,n)=1$. $j=j_0+ln$,
$k=k_0-lm$ satisfies $\mu\lam=\mu j+k$ for any $l\in\Z$.
\end{proof}

\begin{lemma}
Suppose that $\mathrm{div}L=0$. The associated pair  $(\mu,\lam)$ is
resonant  if and only if either $\mu=0$ or if
$\lam\in\Q^+$. In this case the pair has infinitely many resonant integers.
In particular, if in addition $L$ is real analytic at 0 with homogeneous degree
$\lam=N\in\Z^+$, then $(\mu,N)$ is resonant with infinitely many resonant integers.
\end{lemma}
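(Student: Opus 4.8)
The plan is to exploit the strong constraint the divergence-free hypothesis places on $(\mu,\lam)$, recorded in Remark 2.1: when $\mathrm{div}L=0$ one has $\mu(\lam+1)=j$ for some $j\in\Z$, equivalently $\mu\lam=j-\mu$. Substituting this into the defining relation of resonance $\mu\lam=\mu l+k$ collapses the two-parameter search into a single divisibility condition, since the equation becomes $j-k=\mu(l+1)$. As $k$ ranges over $\Z$ for each fixed $l$, the existence of a resonant pair $(l,k)$ with $l\ge1$ is therefore equivalent to the existence of an integer $m\ge2$ with $\mu m\in\Z$.

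First I would treat $\mu=0$ separately: the condition $j-k=\mu(l+1)=0$ is met for every $l\ge1$ (take $k=j$), so the pair is resonant with every $l\ge1$ a resonant integer. For $\mu\ne0$, the requirement $\mu m\in\Z$ for some $m\ge2$ forces $\mu=(\mu m)/m\in\Q$, and the normalization $\re{\mu}\ge0$ upgrades this to $\mu\in\Q^+$; conversely, writing a nonzero $\mu\in\Q^+$ in lowest terms $\mu=a/b$, the condition $\mu m\in\Z$ holds exactly when $b\mid m$, which is satisfied by infinitely many $m\ge2$, hence by infinitely many resonant integers $l=m-1$. Thus, under $\mathrm{div}L=0$, resonance is equivalent to $\mu=0$ or $\mu\in\Q^+$, and in either case there are infinitely many resonant integers.

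It remains to trade the condition on $\mu$ for the stated one on $\lam$, using $\mu=j/(\lam+1)$ in both directions; this is the only step requiring genuine care, since a priori $\lam$ is complex. If $\mu\in\Q^+$ (so $j\ne0$), then $\lam+1=j/\mu\in\Q$, whence $\lam\in\Q$, and $\re{\lam}>1$ upgrades this to $\lam\in\Q^+$. Conversely, if $\lam\in\Q^+$ then $\lam+1\in\Q^+$ and $\mu=j/(\lam+1)\in\Q$, so either $\mu=0$ or, invoking $\re{\mu}\ge0$, $\mu\in\Q^+$. Combining this equivalence with the previous paragraph gives precisely ``resonant $\iff\mu=0$ or $\lam\in\Q^+$,'' with infinitely many resonant integers in every resonant case. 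The concluding assertion is then immediate, as $\lam=N\in\Z^+\subset\Q^+$ puts us in the resonant regime. The crux throughout is the reality/sign bookkeeping: one must use $\re{\lam}>1$ and $\re{\mu}\ge0$ to exclude complex and non-positive rationals and land in $\Q^+$ rather than merely $\Q$.
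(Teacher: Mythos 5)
Your proof is correct and rests on essentially the same argument as the paper's: both start from Remark 2.1's relation $\mu(\lam+1)=j\in\Z$, split off the trivial case $\mu=0$ (the paper's $p=0$), and settle the remaining case by elementary rationality bookkeeping, producing infinitely many resonant integers from the denominator of the rational number involved. The only organizational difference is that you first recast resonance as ``$\mu m\in\Z$ for some $m\ge 2$'' (hence $\mu\in\{0\}\cup\Q^+$) and then transfer rationality from $\mu$ to $\lam$ via $\lam+1=j/\mu$, whereas the paper eliminates $\mu$ at once and solves the resonance equation $\lam(p-k_0)=pl_0+k_0$ for $\lam$ directly, then exhibits the resonant integers $s(m+n)-1$ explicitly; the mathematical content is the same.
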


\begin{proof}
Since $\mathrm{div}L=0$, then $\mu =p/(\lam+1)$ for some $p\in\Z$, $p\ge 0$ (Remark 2.1).
If $p=0$, the pair $(0,\lam)$ is trivially resonant. Suppose that $p\ge 1$. If $(\mu,\lam)$ is
resonant, then there exists $(l_0,k_0)\in \Z^+\times\Z$ such that
$p\lam/(\lam+1)=pl_0/(\lam+1)\, +k_0$. Therefore, $\lam (p-k_0)=pl_0+k_0$ and $\lam\in\Q^+$.
Conversely, if $\lam=m/n$ with $m,n\, \in\Z^+$ and $\gcd (m.n)=1$, then
$\mu =p/(\lam+1)=pn/(m+n)$. It can be verified that for every $s\in \Z^+$, $j=s(m+n)-1$
is $(\mu,\lam)$-resonant with a corresponding $k=p(1-sn)$.
\end{proof}

Now we introduce a  condition $\mcaldc$ for nonresonant pairs $(\mu,\lam)$.
\begin{itemize}
\item[$\mcaldc$]: There exists $C>0$ such that for every $j\in\Z^+$
\[
\left|1-\ei{2\pi i\mu(j-\lam)}\right| \ge C^j
\]
\end{itemize}

\begin{proposition}
Suppose that $\mu\in\C$, $\mu\ne 0$, $\re{\mu}\ge 0$ and $\lam\in \C$, are such that
$(\mu,\lam)$ is nonresonant. Then,
\begin{itemize}
\item[(a)] If $\mu\notin\R$, then $(\mu,\lam)$ satisfies condition $\mcaldc$.
\item[(b)] If $\mu\in\R^+$ and $\lam\notin\R$, then
$(\mu,\lam)$ satisfies condition $\mcaldc$.
\item[(c)] If $\mu,\, \lam\, \in \R^+$, then condition$\mcaldc$
holds if and only if the following condition holds
\begin{itemize}
\item[$\mcaldcc$]: There exists $C>0$ such that for every $j\in\Z^+$ and $k\in\Z$
\[
\left|\mu(j-\lam)-k\right| \ge C^j
\]
\end{itemize}
\end{itemize}
\end{proposition}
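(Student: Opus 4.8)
The plan is to reduce everything to the behaviour of the real part of the exponent. Write $w_j=2\pi i\mu(j-\lam)$, so that the quantity under study is $\abs{1-\ei{w_j}}$. A direct computation gives $\re{w_j}=-2\pi\,\mathrm{Im}(\mu(j-\lam))$, an affine function of $j$ with slope $-2\pi\,\mathrm{Im}(\mu)$. Since $\abs{\ei{w_j}}=\ei{\re{w_j}}$, the size of $\abs{1-\ei{w_j}}$ is governed entirely by $\re{w_j}$. Moreover, nonresonance of $(\mu,\lam)$ means precisely that $\mu(j-\lam)\notin\Z$ for every $j\in\Z^+$, hence $\ei{2\pi i\mu(j-\lam)}\ne 1$ and $\abs{1-\ei{w_j}}>0$ for each individual $j$.

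For (a) and (b) I would show that $\abs{1-\ei{w_j}}$ is in fact bounded below by a fixed positive constant, after which $\mcaldc$ is immediate: if $\abs{1-\ei{w_j}}\ge\delta>0$ for all large $j$ and is strictly positive at the finitely many remaining indices, one simply chooses $C>0$ small enough (below $\min(1,\delta)$ and below $\abs{1-\ei{w_j}}^{1/j}$ for the finitely many small $j$) so that $C^j\le\abs{1-\ei{w_j}}$ for every $j$. In case (a), $\mu\notin\R$ gives $\mathrm{Im}(\mu)\ne 0$, so the slope is nonzero and $\re{w_j}\to\pm\infty$ linearly; thus either $\abs{\ei{w_j}}\to\infty$ or $\abs{\ei{w_j}}\to 0$, and in either event $\abs{1-\ei{w_j}}$ stays bounded away from $0$ for large $j$. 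In case (b), $\mu\in\R^+$ forces $\mathrm{Im}(\mu)=0$, so $\re{w_j}=2\pi\mu\,\mathrm{Im}(\lam)$ is constant in $j$ and nonzero (because $\mu>0$ and $\mathrm{Im}(\lam)\ne 0$); hence $\abs{\ei{w_j}}=\ei{2\pi\mu\,\mathrm{Im}(\lam)}=:\rho\ne 1$ and $\abs{1-\ei{w_j}}\ge\abs{1-\rho}>0$ uniformly in $j$.

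For (c), $\mu,\lam\in\R^+$ make $w_j$ purely imaginary, so $\abs{\ei{w_j}}=1$ and
\[
\abs{1-\ei{2\pi i\mu(j-\lam)}}=2\abs{\sin(\pi\mu(j-\lam))}.
\]
Here I would invoke the elementary two-sided estimate $4\,\mathrm{dist}(x,\Z)\le\abs{1-\ei{2\pi i x}}\le 2\pi\,\mathrm{dist}(x,\Z)$ (from $2\norm{x}\le\abs{\sin\pi x}\le\pi\norm{x}$ with $\norm{x}=\mathrm{dist}(x,\Z)$) at $x=\mu(j-\lam)$. Since $\min_{k\in\Z}\abs{\mu(j-\lam)-k}=\mathrm{dist}(\mu(j-\lam),\Z)$, condition $\mcaldcc$ is exactly $\mathrm{dist}(\mu(j-\lam),\Z)\ge C^j$ for all $j$. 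The upper bound then yields $\mcaldc\Rightarrow\mathrm{dist}(\mu(j-\lam),\Z)\ge\frac{1}{2\pi}C^j\ge(C/2\pi)^j$, giving $\mcaldcc$ after shrinking the base, while the lower bound yields $\mcaldcc\Rightarrow\abs{1-\ei{w_j}}\ge 4\,\mathrm{dist}(\mu(j-\lam),\Z)\ge C^j$, giving $\mcaldc$.

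The main content, and the only genuine step, is the equivalence in (c): parts (a) and (b) collapse to a trivial uniform lower bound once $\re{w_j}$ is computed, whereas (c) is a true two-sided comparison. The one subtlety I anticipate is the bookkeeping of constants there, namely absorbing the fixed prefactors $2\pi$ and $4$ into the geometric base $C$; this is harmless because for $j\ge 1$ a constant factor can always be traded for an arbitrarily small decrease of the base, e.g. $(C/2\pi)^j\le\frac{1}{2\pi}C^j$.
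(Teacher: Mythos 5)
Your argument is correct. For parts (a) and (b) it is essentially the paper's proof: both rest on the fact that $\abs{\ei{2\pi i\mu(j-\lam)}}$ is the exponential of an affine function of $j$, so it stays away from $1$ for all large $j$ when $\mathrm{Im}(\mu)\ne 0$ and uniformly in $j$ when $\mu\in\R^+$, $\lam\notin\R$, with nonresonance supplying positivity at the finitely many exceptional indices (the paper isolates the at most one $j_0$ where the real exponent vanishes; your ``finitely many small $j$'' covers the same point). Where you genuinely differ is part (c). The paper argues by contraposition in both directions: assuming $\mcaldcc$ fails, it bounds $\abs{\ei{2\pi i\mu(j_l-\lam)}-1}^2=2\bigl(1-\cos[2\pi(\mu(j_l-\lam)-k_l)]\bigr)$ above via the mean value theorem; assuming $\mcaldc$ fails, it sets $k_l=[\mu(j_l-\lam)]$ and invokes a small-angle lower bound for $2(1-\cos\alpha)$. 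You instead prove both implications directly from the two-sided estimate $4\,\mathrm{dist}(x,\Z)\le\abs{1-\ei{2\pi ix}}\le 2\pi\,\mathrm{dist}(x,\Z)$. Your route is cleaner and in fact repairs two blemishes in the paper's version: the inequality $2(1-\cos\alpha)>\alpha^2$ claimed there for small $\alpha$ is reversed (the usable bound is $2(1-\cos\alpha)\ge(4/\pi^2)\alpha^2$ for $\abs{\alpha}\le\pi$, which is what your factor $4$ encodes), and the greatest-integer choice $k_l=[\mu(j_l-\lam)]$ should be the nearest integer when the fractional part is close to $1$ --- both issues disappear once one works with $\mathrm{dist}(\cdot,\Z)$ as you do. The only cost in either approach is the bookkeeping of fixed prefactors ($4$, $2\pi$, $4\pi$), and your explicit remark that such a factor can be absorbed into the geometric base $C^j$ because $j\ge 1$ is exactly the point the paper leaves implicit.
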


\begin{proof}
Let $\mu=\mu_1+i\mu_2$, $\lam=\lam_1+i\lam_2$ with
$\mu_1,\mu_2,\lam_1,\lam_2\in\R$ and $\mu_1,\lam_1\ge 0$. Then for
$j\in\Z^+$ we have
\begeq
\ei{2\pi i\mu(j-\lam)}=\ei{2\pi (\mu_1\lam_2+\mu_2\lam_1-\mu_2j)}\,
\ei{2\pi i(\mu_1j-\mu_1\lam_1+\mu_2\lam_2)}
\stopeq
and since $(\mu,\lam)$ is nonresonant, then for every $j\in\Z^+$, $k\in\Z$,
\begeq
(\mu_1\lam_1-\mu_2\lam_2-j\mu_1-k)+i(\mu_1\lam_2+\mu_2\lam_1-j\mu_2)\,\ne 0
\stopeq

Suppose that $\mu_2\ne 0$. If for every $j\in\Z^+$
$\mu_1\lam_2+\mu_2\lam_1-j\mu_2 \ne 0$, then
\begeq
\min_{j\in\Z^+}\left|1-\ei{2\pi(\mu_1\lam_2+\mu_2\lam_1-j\mu_2)}\right|\, >0\,.
\stopeq
Consequently the pair $(\mu,\lam)$ satisfies $\mcaldc$.
If there exists $j_0\in\Z^+$ such that
$\mu_1\lam_2+\mu_2\lam_1-j_0\mu_2 =0$, then  $\left|\ei{2\pi i\mu(j_0-\lam)}\right|=1$.
It follows from (6.3) that, for every $k\in\Z$, we necessarily have
$\mu_1\lam_1-\mu_2\lam_2-j_0\mu_1-k\ne 0$. Hence,
\begeq
\left|\ei{2\pi i\mu(j_0-\lam)}-1\right| >0
\stopeq
and we also have
\begeq
\min_{j\in\Z^+,\, j\ne j_0}\left|1-\ei{2\pi(\mu_1\lam_2+\mu_2\lam_1-j\mu_2)}\right|\, >0\,.
\stopeq
Inequalities (6.5), (6.6) imply that the pair $(\mu,\lam)$ satisfies $\mcaldc$,
which proves part (a).

Next, suppose that $\mu=\mu_1\in\R^+$ and $\lam_2\ne 0$. Then for every $j\in\Z$,
$\left|\ei{2\pi i\mu_1(j-\lam)}\right|=\ei{2\pi\mu_1\lam_2} \ne 1$ and part (b) follows.

To prove part (c), suppose that $\mu=\mu_1>0$ and $\lam=\lam_1>0$. We need to show
that conditions $\mcaldc$ and $\mcaldcc$ are equivalent.
If $\mcaldcc$ does not hold, then for every $l\in\Z^+$, there exists $j_l\in\Z^+$
and $k_l\in\Z$ such that
\begeq
\left|\mu(j_l-\lam)-k_l\right|\, <\, l^{-j_l}\, .
\stopeq
We have then
\begeq\begar{ll}
\dis\left|\ei{2\pi i\mu(j_l-\lam)}-1\right|^2 & =\dis\left|\ei{2\pi i\left[\mu(j_l-\lam)-k_l\right]}-1\right|^2\\
\\
& =\dis 2\left(1-\cos\left[2\pi(\mu(j_l-\lam)-k_l)\right]\right)\\ \\
& =\dis 4\pi\left[\mu(j_l-\lam)-k_l\right]\,\sin\ta_l
\stopar\stopeq
for some $\ta_l$. This, together with (6.7), give
\begeq
\left|\ei{2\pi i\mu(j_l-\lam)}-1\right|^2\, <\, 4\pi l^{-j_l}
\stopeq
which means that $\mcaldc$ does not hold.

Conversely, suppose that $\mcaldc$ does not hold. Then for every $l\in \Z^+$, there exists
$j_l\in\Z^+$ such that
\begeq
\left|\ei{2\pi i\mu(j_l-\lam)}-1\right|\, <\, l^{-j_l}\, .
\stopeq
Let $k_l=[\mu(j_l-\lam)]$.
Since $0\le \mu(j_l-\lam)-k_l<1$, it follows from (6.10) that
$\dis\lim_{l\to\infty}(\mu(j_l-\lam)-k_l)=0$. Consequently, after using
$\abs{\ei{i\alpha}-1}^2=2(1-\cos\alpha)>\alpha^2$  (for $\alpha$ small), we obtain
\[
\dis l^{-2j_l} \, >\, \dis\left|\ei{2\pi i(\mu(j_l-\lam)-k_l)}-1\right|^2\, > \, \dis 4\pi \left|\mu(j_l-\lam)-k_l\right|^2
\]
and therefore $\mcaldcc$ does not hold.
\end{proof}

\begin{theorem}
Let $L$ be a $\lam$-homogeneous vector field with $\mu\ne 0$ and such that the pair $(\mu,\lam)$
is nonresonant and satisfies condition $\mcaldc$. Then for every function $f(x,y)$ that is
real analytic at
$0\in\R^2$, there exist $\ep>0$ and a bounded function $w\in \cinf(D(0,\ep)\baks\{0\})$ such that
$u=w/r^{\lam-1}$ is a distribution solution of $(6.1)$ in the disc $D(0,\ep)$.
\end{theorem}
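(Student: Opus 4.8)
The plan is to expand $f$ into its homogeneous components, solve the equation term by term via Theorem 5.1, and then use condition $\mcaldc$ to keep the resulting series convergent on a small disc. Since $f$ is real analytic near $0$, I would write $f=\sum_{n\ge 0}f_n$, where $f_n(r,\ta)=r^n f_{n,0}(\ta)$ is the $n$-homogeneous part of the Taylor expansion; analyticity in a disc of radius $\rho_0$ supplies geometric bounds $\max_\ta\abs{f_{n,0}(\ta)}\le C_0\rho_0^{-n}$, with matching bounds on the $\ta$-derivatives. The nonresonance of $(\mu,\lam)$ says $\mu(\lam-l)\notin\Z$ for every $l\ge 1$; taking $l=n+1$ gives $\mu(\lam-n-1)\notin\Z$, so Theorem 5.1 applies to each $f_n$ and produces a homogeneous solution $u_n=r^{n+1-\lam}v_n(\ta)$ of $Lu_n=f_n$, with $v_n$ given by $(5.4)$--$(5.6)$ for $\sigma=n$ (the finitely many lowest-order terms, where $\re{\sigma}$ may fail to exceed the threshold of Theorem 5.1, being defined by the same regularization of homogeneous distributions used in its proof).

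Setting $w_n(r,\ta)=r^{\lam-1}u_n=r^n v_n(\ta)$, the problem reduces to showing that $w=\sum_n w_n$ converges to a bounded function that is smooth off the origin. The only dangerous ingredient is the constant $K_n$ in $(5.6)$, whose denominator is $1-\ei{2\pi i\mu(n+1-\lam)}$. Here $\mcaldc$ enters decisively: taking $j=n+1$ it gives $\abs{1-\ei{2\pi i\mu(n+1-\lam)}}\ge C^{n+1}$, so the small denominators decay at most geometrically and $\abs{K_n}\le \mathrm{const}\cdot C^{-(n+1)}\rho_0^{-n}\ei{cn}$, where $\ei{cn}$ absorbs the growth of the factors $\exp(\pm i(n+1-\lam)\psi(\ta))$ (with $c=\sup_\ta\abs{\mathrm{Im}\,\psi(\ta)}$, finite since $\psi$ is continuous, and $1/p$ bounded by $(\mathcal{C}2'')$). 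Combining this with the analytic decay of $f_{n,0}$ yields $\max_\ta\abs{v_n(\ta)}\le A\,b^{n}$ for constants $A,b>0$ depending only on $C$, $\rho_0$, and $c$. Hence $\max_{r\le\ep}\abs{w_n}\le A(b\ep)^n$, which is summable once $\ep<1/b$; the same estimate applied to $\pr^i\pt^j(r^n v_n)$, the extra polynomial factors $n^{i+j}$ not affecting geometric convergence, shows that all derivatives converge uniformly on compact subsets of $\{0<r<\ep\}$. Thus $w$ is bounded and lies in $\cinf(D(0,\ep)\baks\{0\})$, and $u=w/r^{\lam-1}=\sum_n u_n$.

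It then remains to check that $u$ solves $(6.1)$ in $\distr(D(0,\ep))$. Each $u_n$ satisfies $Lu_n=f_n$ by Theorem 5.1; for $n$ large enough that $\re{n+1-\lam}>-2$ the distribution $u_n$ is simply $L^1_{\mathrm{loc}}$, and the tail $\sum_{n\ge N_0}u_n$ converges in $L^1(D(0,\ep))$ by the geometric bound above, while the finitely many remaining $u_n$ are fixed regularized homogeneous distributions. Consequently the partial sums $S_N=\sum_{n\le N}u_n$ converge to $u$ in $\distr$, and since $LS_N=\sum_{n\le N}f_n\to f$ uniformly on compacta, hence in $\distr$, the continuity of $L$ on $\distr$ gives $Lu=f$.

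The hard part will be the second paragraph: controlling the small denominators $K_n$. Everything hinges on the fact that $\mcaldc$ permits only geometric decay $C^{n+1}$ of $\abs{1-\ei{2\pi i\mu(n+1-\lam)}}$, which is exactly what shrinking the radius $\ep$ can defeat, given the geometric decay of the analytic data $f_{n,0}$. Were the denominators allowed to decay faster than any geometric rate, i.e.\ were $\mcaldc$ to fail, the series for $w$ would diverge for every $\ep>0$; this is precisely the small-denominator phenomenon that $\mcaldc$ is designed to exclude.
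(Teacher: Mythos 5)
Your proposal is correct and follows essentially the same route as the paper: expand $f$ into homogeneous Taylor components, solve each $Lu_n=P_n$ via the periodic ODE solutions of Theorem 5.1 with constants $K_n$, use $\mcaldc$ to bound the small denominators geometrically, and sum the resulting series $w=\sum_n r^n v_n(\ta)$ on a sufficiently small disc. The only (immaterial) difference is the final distributional verification, where the paper splits $u$ into the finitely many singular regularized terms plus a $C^1$ tail solving the remainder equation, while you pass to the limit of partial sums paired against ${}^tL\Phi$; both arguments rest on the same decomposition and are equally valid.
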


\begin{proof}
We expand the real analytic function $f$ as
\begeq
f(x,y)=\sum_{j=0}^\infty P_j(x,y)\,\ \ \mathrm{with}\ \
P_j(x,y)=\sum_{k+l=j}\frac{1}{k!l!}\dd{\,^jf}{x^k\pa y^l}(0)\, x^ky^l\, .
\stopeq
Let
\begeq
P_j(x,y)=r^jf_j(\ta)\ \ \mathrm{with}\ \
f_j(\ta)=\sum_{k+l=j}\frac{1}{k!l!}\dd{\,^jf}{x^k\pa y^l}(0)\, \cos^k\ta\sin^l\ta
\stopeq
Let $R>0$ be such that the function $f(x,y)$ has a holomorphic extension $\widehat{f}(\widehat{x},\widehat{y})$
in an open neighborhood of the bidisc $D(0,R)^2\subset\C^2$. Let $\dis M_0=\max_{\abs{\widehat{x}},\,\abs{\widehat{y}}\,\le R}
\left|\widehat{f}(\widehat{x},\widehat{y})\right|$. It follows from the Cauchy integral formula that
\[
\left|\dd{\, ^jf}{x^k\pa y^l}(0)\right|\,\le\, k!l!\, \frac{M_0}{R^j}\, .
\]
Hence, we get the estimate
\begeq
\left|f_j(\ta)\right|\,\le\, (j+1)\frac{M_0}{R^j}\qquad\forall\ta\, .
\stopeq
Let
\begeq
\psi(\ta)=\int_0^\ta\frac{q(s)}{p(s)}\, ds=\psi_1(\ta)+i\psi_2(\ta)
\stopeq
so that $\psi(2\pi)=2\pi\mu$. Define $v_j(\ta)\in C^\infty(\R^2\baks\{0\})$ by
\begeq
v_j(\ta)=\left[K_j+\int_0^\ta\!\!\frac{f_j(s)}{p(s)}\,\ei{-i(j+1-\lam)\psi(s)}ds\right]
\ei{i(j+1-\lam)\psi(\ta)}
\stopeq
with
\begeq
K_j=\frac{1}{1-\ei{2\pi i\mu(j+1-\lam)}}\int_0^{2\pi}\!\!
\frac{f_j(s)}{p(s)}\,\ei{-i(j+1-\lam)\psi(s)}ds\, .
\stopeq
Note that the denominator in $K_j$ is not zero since the pair $(\mu,\lam)$ is nonresonant.
The function $v_j$ is a periodic solution of the differential equation
\begeq
p(\ta)v'(\ta)-i(j+1-\lam)q(\ta)v(\ta)=f_j(\ta)\, .
\stopeq
Therefore $r^{j+1-\lam}v_j(\ta)$ is the homogeneous distribution solution of $Lu=P_j$
given in Theorem 5.1. Consider the series
\begeq
w(r,\ta)=\sum_{j=0}^\infty v_j(\ta)r^j\,.
\stopeq
We prove that $w$ is a $\cinf$ function of $(r,\ta)$ for small $r$. Let $\lam_1$ and $\lam_2$ be
the real and imaginary parts of $\lam$. Then
\[
\left| \ei{-i(j+1-\lam)\psi(s)}\right|=\ei{-\lam_1\psi_2(s)-\lam_2\psi_1(s)}
\left(\ei{\psi_2(s)}\right)^{j+1}\, .
\]
Set
\[
\dis M_1=\max_{0\le s\le 2\pi}\ei{-\lam_1\psi_2(s)-\lam_2\psi_1(s)}\,;\ \
 \dis M_2=\max_{0\le s\le 2\pi}\ei{\psi_2(s)}\, ;\ \ \mathrm{ and}\ \
\dis p_0=\min_{0\le s\le 2\pi}\abs{p(s)}\, .
\]
 Then,
it follows from (6.13), that for $0\le\ta\le 2\pi$, we have
\begeq
\left|\int_0^\ta\!\!\frac{f_j(s)}{p(s)}\ei{-i(j+1-\lam)\psi(s)}\, ds\right|\,\le\,
\frac{2\pi M_1M_0(j+1)M_2^{j+1}}{p_0R^j}\,\le\, C_0^j
\stopeq
for some constant $C_0$ depending only on $f$, $\lam$, $p$ and $q$.
Since $(\mu,\lam)$ satisfies the diophantine condition $\mcaldc$,
it follows from (6.16) and (6.19) that
\begeq
\abs{K_j}\,\le \, C_1^j\,\ \ \mathrm{with}\ \ C_1=\frac{C_0}{C}\, .
\stopeq
This inequality, together with (6.15) and (6.19), imply that
\begeq
\left|v_j(\ta)\right|\,\le\, (C_1^j+C_0^j)M_1M_2^{j+1}\,\le\, C_2^j
\stopeq
with a constant $C_2>0$ depending only on $f$, $\lam$, $p$ and $q$. This proves at once
that the series defining $w$ given by (6.18) converges uniformly for $\ta\in [0,\ 2\pi]$ and
$r\in [0,\ R_0]$ for any positive number $R_0<1/C_2$. Consequently, $w(r,\ta)$ is continuous,
$2\pi$-periodic in $\ta$ and real analytic in $r$. The uniform convergence of the series of
derivatives can be proved by a similar argument. For instance, since
\[
v'_j(\ta)=i(j+1-\lam)\frac{q(\ta)}{p(\ta)}v_j(\ta)+\frac{f_j(\ta)}{p(\ta)}
\]
then similar estimates show that $\sum v'_j(\ta)r^j$ is again uniformly convergent
and so $w\in C^1$ in the $(r,\ta)$ variables. This argument can be repeated for the successive
derivatives leading to $w\in C^\infty ([0,\ R_0]\times\cir)$.

Let $u=w/r^{\lam-1}$. We can write
\begeq
u(r,\ta)=\sum_{j=0}^{[\re{\lam}]}\!v_j(\ta)r^{j+1-\lam}+
\sum_{j=[\re{\lam}]+1}^\infty\!\! v_j(\ta)r^{j+1-\lam}\, .
\stopeq
The finite sum in (6.22) is a distribution solution of the equation\\
$ Lg=\sum_{j=0}^{[\re{\lam}]}P_j\,$ and the infinite sum, which is a function of at least
class $C^1$  at $0\in\R^2$, solves the equation
 $Lg=f-\sum_{j=0}^{[\re{\lam}]}P_j$.
 This completes the proof of the theorem.
\end{proof}

\begin{remark}
{\rm Diophantine conditions such as the one used here appear in connection with the hypoellipticity
and solvability. For instance in {\cite{Ber-Mez1}}, {\cite{Ber-Mez2}}, the analytic solvability
in a neighborhood of a degeneracy curve of a vector field is controlled by a Diophantine condition. }
\end{remark}

\begin{theorem}
Suppose that $(\mu,\lam)$ is nonresonant, satisfies $\mcaldc$, and that $\re{\mu}>0$.
Let $f$ be a real analytic function in a neighborhood of $0\in\R^2$. If $u$ is a distribution
solution of $(6.1)$ such that $u$ is continuous outside 0, then there exists $\ep>0$ such that
$u\in\cinf(D(0,\ep)\baks\{0\})$.
\end{theorem}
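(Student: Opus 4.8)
The plan is to subtract the explicit particular solution furnished by Theorem 6.1 and then appeal to the regularity theory for the homogeneous equation. Since $(\mu,\lam)$ is nonresonant, satisfies $\mcaldc$, and $f$ is real analytic at $0$, Theorem 6.1 gives $\ep>0$ and a bounded $w\in\cinf(D(0,\ep)\baks\{0\})$ such that $u_0:=w/r^{\lam-1}$ is a distribution solution of $Lu_0=f$ in $D(0,\ep)$; in particular $u_0\in\cinf(D(0,\ep)\baks\{0\})$. Setting $v=u-u_0$ we obtain $Lv=0$ in $\distr(D(0,\ep))$ with $v\in C^0(D(0,\ep)\baks\{0\})$ (no continuity at $0$ is claimed or needed). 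It then suffices to prove $v\in\cinf(D(0,\ep')\baks\{0\})$ for some $\ep'\le\ep$, since $u=v+u_0$ inherits the conclusion. On $\R^2\baks\Si$ the vector field $L$ is elliptic, hence hypoelliptic, so $Lv=0$ already forces $v\in\cinf((D(0,\ep)\baks\{0\})\baks\Si)$; all the work is concentrated on the characteristic rays $p\in\Si\baks\{0\}$.

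At such a $p$ the first integral $Z_\mu$ is critical: because $\condp$ is not assumed, the map $\ta\mapsto\ta+\phi_1(\ta)$ need not be monotone and $Z_\mu$ may fold along $\Si$, so Proposition 2.2 is unavailable. I would instead use the interior-point mechanism underlying Remark 3.1 and the proofs of Theorems 3.2 and 3.3: a continuous solution $v$ on an open set $\Om$ is $\cinf$ at $p$ whenever $Z_\mu(p)$ is an interior point of $Z_\mu(\Om)$. To verify this on a punctured disc $\Om=D(0,R)\baks\{0\}$, write $a=\re{1/\mu}=\re\mu/\abs\mu^2>0$ (positive precisely because $\re\mu>0$) and $m'=\min_\ta\ei{-\phi_2(\ta)}$, $M'=\max_\ta\ei{-\phi_2(\ta)}$. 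From $\abs{Z_\mu(r,\ta)}=r^{a}\ei{-\phi_2(\ta)}$ and the winding-number-one computation behind Proposition 2.2, now run on each modulus level, one gets $Z_\mu(\Om)\supseteq\{0<\abs z<R^{a}m'\}$. Since $\abs{Z_\mu(p)}\le r_p^{a}M'$ with $r_p=\abs p$, the value $Z_\mu(p)$ lies in this open disc, hence is interior, as soon as $R>r_p(M'/m')^{1/a}$; this can be arranged with $R<\ep$ exactly when $r_p<\ep':=\ep(m'/M')^{1/a}$.

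With the interior condition secured for every $p\in\Si$ with $0<\abs p<\ep'$, I would reproduce the Baouendi-Treves step of Theorem 3.3: near $p$ the approximation formula and constancy on the fibers of $Z_\mu$ yield $v=H\circ Z_\mu$ with $H$ continuous on $Z_\mu(O_p)$ and holomorphic in its interior. Because a full disc $V\ni Z_\mu(p)$ lies in $Z_\mu(\Om)$, the function $H$ is determined on $V$ by the single-valued function $v$ through regular (noncharacteristic) preimages; it is holomorphic off the measure-zero critical image $Z_\mu(\Si\cap\Om)$, is locally bounded, and is single-valued on the simply connected $V$ by the monodromy theorem, hence extends holomorphically across $Z_\mu(\Si\cap\Om)$ to all of $V$. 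Thus $v=H\circ Z_\mu$ near $p$ with $H$ holomorphic at $Z_\mu(p)$ and $Z_\mu$ smooth, so $v$ is $\cinf$ at $p$. Combined with the elliptic region this gives $v\in\cinf(D(0,\ep')\baks\{0\})$, and therefore $u\in\cinf(D(0,\ep')\baks\{0\})$.

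The main obstacle is exactly this characteristic case: turning the local factorization $v=H\circ Z_\mu$ into a genuinely holomorphic germ at the fold value $Z_\mu(p)$. It is resolved by the two inputs above, namely the modulus estimate forcing $Z_\mu(p)$ to be an interior value (where $\re\mu>0$ is indispensable) and the removable-singularity argument promoting $H$, holomorphic and bounded off the critical curve $Z_\mu(\Si)$, to a function holomorphic across it. I emphasize that the nonresonance and Diophantine hypotheses enter only through Theorem 6.1 to build $u_0$; the regularity of the homogeneous part $v$ uses solely $\re\mu>0$.
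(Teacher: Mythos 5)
Your decomposition is exactly the paper's: Theorem 6.1 supplies $u_0=w/r^{\lam-1}$ with $Lu_0=f$ and $u_0\in\cinf(D(0,\ep)\baks\{0\})$, and the problem reduces to smoothness of the continuous solution $v=u-u_0$ of the homogeneous equation, using only $\re{\mu}>0$. The paper finishes in one line by citing Theorem 3.3. Note that Theorem 3.3 is stated \emph{without} any $\condp$ hypothesis (it assumes only $\re{\mu}>0$, $\Om$ simply connected, $Z_\mu(\Om)$ simply connected), and its proof, together with Remark 3.1, is designed precisely for the folding situation you worry about; the paper even applies it to a difference of solutions that is continuous only outside the origin, as you do. So the premise that drove your reconstruction ("Proposition 2.2 is unavailable, hence the mechanism must be rebuilt") is misplaced: citing Theorem 3.3 closes the proof. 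Your quantitative interior-point computation ($\abs{Z_\mu(p)}\le r_p^{a}M'$ against the covered punctured disc $\{0<\abs{z}<R^{a}m'\}$, with $a=\re{1/\mu}$, giving $\ep'=\ep(m'/M')^{1/a}$) is correct and is a nice explicit supplement, but it is not needed for the statement.

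The genuine gap is in your last step, where $H$ is promoted to a holomorphic function across the critical image. You argue: $H$ is continuous on the disc $V$, holomorphic off $Z_\mu(\Si\cap\Om)$, and that set has zero area, "hence extends holomorphically across." Zero two-dimensional measure is not a removability criterion for continuous functions. Condition $(\mathcal{C}1)$ only forces $\Si_0$ to have empty interior; it may be an uncountable Cantor-type set of positive Hausdorff dimension $d$, in which case $Z_\mu(\Si\cap\Om)$ is a Cantor family of smooth spirals of dimension $1+d>1$. Compact sets of dimension greater than $1$ are never removable for continuous functions holomorphic off them (the Cauchy transform of a Frostman measure on such a set is H\"older continuous on $\C$, holomorphic off the set, and nonconstant), so continuity plus holomorphy off $Z_\mu(\Si\cap\Om)$ plus a null-set hypothesis cannot, by themselves, yield the extension; Painlev\'e/Besicovitch/Morera-type removability needs finite or $\sigma$-finite length, which holds when $\Si_0$ is finite (e.g.\ $L$ real analytic, Remark 1.4) but not in the paper's generality. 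Relatedly, your appeal to the monodromy theorem on the simply connected $V$ is circular: monodromy requires continuation along all paths in $V$, including those crossing $Z_\mu(\Si\cap\Om)$, which is what is to be proved; the paper's Theorem 3.3 instead obtains single-valuedness by continuing $u$ itself along a small circle about $0$ and transporting this along the image curve. Replacing your final paragraph by a citation of Theorem 3.3 (or Remark 3.1) repairs the argument and makes it coincide with the paper's proof.
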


\begin{proof}
By Theorem 6.1, there exists a distribution solution $u_0$ of (6.1) with $u_0\in \cinf(D(0,R)\baks\{0\})$.
If $u_1$ is any other solution, continuous outside 0, then $u_0-u_1$ satisfies $L(u_0-u_1)=0$ and
 is continuous outside 0. Since, $\re{\mu}>0$, then it follows from Theorem 3.3, that there exists
$\ep>0$ such that $u_0-u_1\in \cinf(D(0,\ep)\baks\{0\})$.
\end{proof}

\begin{remark}
{\rm If $L$ is real analytic in $\R^2\baks\{0\}$, then the distribution solution
constructed in Theorem 6.1 is real analytic in a punctured neighborhood of 0 and so is
every solution if $\re{\mu}>0$.}
\end{remark}

Now we consider the case when $(\mu,\lam)$ is resonant. As was observed in section 5,
the equation (6.1) with a homogeneous right hand side does not always have a homogeneous
solution unless a compatibility condition is satisfied. A real analytic function $f(x,y)$
is $(\mu,\lam)$-compatible if, its  series expansion (6.11) satisfies
\begeq
\int_0^{2\pi}\!\!\frac{f_j(\ta)}{p(\ta)}\ei{-i(j+1-\lam)\psi(\ta)}\, d\ta  =0\, ,\quad
\forall j\in\mathbb{J}(\mu,\lam)
\stopeq
where $f_j$ is given by (6.12).
Note that when $\mu\notin\Q$, then there is only one resonant integer and (6.23) reduces
to a single condition, while if $\mu\in\Q$, then there are infinitely many conditions
(see Lemma 6.1). In particular, if, in addition, $L$ is real analytic at $0\in\R^2$ and
$\mathrm{div}L=0$, then
there are infinitely many compatibility conditions (see Lemma 6.2).

\begin{theorem}
Suppose that $(\mu,\lam)$ is resonant and satisfies $\mcaldc$.
Then for every $(\mu,\lam)$-compatible, real analytic function $f$, equation $(6.1)$
has a distribution solution as in Theorem $6.1$
\end{theorem}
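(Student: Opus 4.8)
The plan is to follow the structure of the proof of Theorem 6.1 essentially verbatim, adapting it to account for the resonant integers. The only place where nonresonance was used in Theorem 6.1 was to guarantee that the denominator $1-\ei{2\pi i\mu(j+1-\lam)}$ defining $K_j$ in (6.16) is nonzero. When $(\mu,\lam)$ is resonant, this denominator vanishes precisely when $j+1\in\mathbb{J}(\mu,\lam)$, i.e.\ when $\mu(j+1-\lam)\in\Z$. For those resonant indices $j$, the periodic solution $v_j$ of the ODE (6.17) cannot be produced by formula (6.16); instead, I would invoke the compatibility hypothesis (6.23). Indeed, for $j\in\mathbb{J}(\mu,\lam)$ the compatibility condition asserts that the relevant integral $\int_0^{2\pi}\frac{f_j(\ta)}{p(\ta)}\ei{-i(j+1-\lam)\psi(\ta)}\,d\ta$ vanishes, which is exactly the solvability condition for the periodic ODE (5.5) with $\sigma=j$; by Theorem 5.2 one then takes $K_j=0$ and obtains a genuine periodic solution $v_j$ given by (6.15) with vanishing constant.

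So the construction splits the index set $\Z^+\cup\{0\}$ into the resonant indices (where $v_j$ comes from Theorem 5.2 with $K_j=0$) and the nonresonant indices (where $v_j$ comes from (6.15)--(6.16) as in Theorem 6.1). In both cases $r^{j+1-\lam}v_j(\ta)$ is a homogeneous distribution solution of $Lu=P_j$. I would then form the same series $w(r,\ta)=\sum_{j=0}^\infty v_j(\ta)r^j$ as in (6.18) and set $u=w/r^{\lam-1}$.

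The main work is re-deriving the exponential bound $\abs{v_j(\ta)}\le C_2^j$ so that the series converges and defines a $\cinf$ function of $(r,\ta)$ for small $r$. The estimate (6.19) on the integral term holds for every $j$ exactly as before, since it does not depend on resonance. For the nonresonant indices, the bound $\abs{K_j}\le C_1^j$ follows from condition $\mcaldc$ precisely as in (6.20). For the resonant indices we have $K_j=0$, so those contribute nothing to $K_j$ and the bound on $v_j$ is immediate from (6.19) alone; hence the combined estimate (6.21) survives. The uniform convergence of the differentiated series and the resulting regularity $w\in\cinf([0,R_0]\times\cir)$ then go through verbatim.

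The subtle point, and the step I expect to require the most care, is the application of condition $\mcaldc$ when there are infinitely many resonant integers (the case $\mu\in\Q$ or $\lam\in\Q^+$, by Lemma 6.1). In that situation infinitely many indices $j$ are resonant and contribute $K_j=0$, while the remaining (nonresonant) indices $j$ must still satisfy the lower bound $\abs{1-\ei{2\pi i\mu(j+1-\lam)}}\ge C^{\,j}$ coming from $\mcaldc$; one must check that $\mcaldc$, as stated for \emph{all} $j\in\Z^+$, remains a sensible hypothesis even though it is violated at the resonant indices. I would interpret $\mcaldc$ here as holding for all nonresonant $j$ (the natural reading, since at resonant $j$ the left side is $0$ and no positive lower bound can hold), so that the bound $\abs{K_j}\le C_1^j$ is asserted only at the nonresonant indices where $K_j$ is actually defined. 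With that reading the estimate (6.21) holds uniformly in $j$ and the rest of the argument is identical to Theorem 6.1; the splitting (6.22) into a finite sum of genuine distributions plus a $C^1$ remainder then completes the proof.
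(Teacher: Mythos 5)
Your proposal is correct and is essentially the paper's own (implicit) argument: Theorem 6.3 is stated without a written proof, the intended one being exactly the adaptation you describe --- run the proof of Theorem 6.1, taking $K_j=0$ at the resonant indices, where the compatibility condition (6.23) supplies the periodic solution $v_j$ (just as Theorem 5.2 is reduced to Theorem 5.1 with $K=0$), and invoking the Diophantine bound only at the nonresonant indices, so that the estimate $\abs{v_j}\le C_2^j$ and the remainder of the construction go through unchanged. Your reading of $(\mathcal{DC})$ as a hypothesis on the nonresonant indices only is also the right (indeed the only possible) interpretation for a resonant pair, since the left-hand side vanishes when the index is resonant.
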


\begin{remark}
{\rm Among the results contained in  the recent papers {\cite{Tre2}}, {\cite{Tre3}}
are the solvability and hypoellipticity of vector fields
\[
\left(a_{11}x+a_{12}y\right)\px+\left(a_{21}x+a_{22}y\right)\py
\]
with $a_{ij}\in\C$. In particular, such vector fields are not hypoelliptic at 0. This
lack of hypoellipticity is generalized in {\cite{Mez-Sin}} in the real analytic category to
vector fields
with an isolated singularities.}
\end{remark}

\section{Solvability when $L$ satisfies condition $(\mathcal{P})$}
In this section we consider the equation $Lu=f$
 with $f$ a $\cinf$ function
in a neighborhood of 0 under the assumption that $L$ satisfies $\condp$ in
$\R^2\baks\{0\}$. We have the following theorem.

\begin{theorem}
Suppose that the homogeneous vector field $L$ satisfies condition $(\mathcal{P})$ in $\R^2\baks\{0\}$
and $(\mu,\lam)$ is nonresonant. Then, for every $f\in\cinf(\R^2)$ there exist
$\ep>0$ and a continuous function $w(r,\ta)$ in the cylinder $[0,\ \ep)\times\cir$
such that  $w(0,\ta)=0$ and $w\in\cinf((0,\ \ep)\times\cir)$ and such that the function
$u(x,y)$ defined in polar coordinates by $u=w/r^{\lam-1}$ is a distribution
solution of
\begeq
Lu=f\quad\mathrm{in}\quad D(0,\ep)\subset \R^2\, .
\stopeq
\end{theorem}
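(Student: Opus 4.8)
The plan is to reduce $(7.1)$ to a first-order equation on the cylinder and then to split $f$ into a finite Taylor jet at $0$, handled by the ODE method of Theorem 5.1, and a remainder flat at $0$, handled by the Nirenberg--Treves solvability afforded by $\condp$. Writing $u=w\,r^{1-\lam}$ and using $L=r^{\lam-1}(p\pt-iqr\pr)$, one computes
\[
Lu=p(\ta)\pt w-iq(\ta)\,r\pr w+i(\lam-1)q(\ta)\,w,
\]
so $Lu=f$ is equivalent to $\mathcal{L}w=f$ on $[0,\ep)\times\cir$, with $\mathcal{L}$ the operator on the right. Since $L$ satisfies $\condp$, Lemma 2.1 gives $\re{\mu}>0$, so by Proposition 2.2 the first integral $Z:=Z_\mu$ is a homeomorphism of $\R^2$ onto $\C$, of class $\cinf$ with $dZ\ne0$ in $\R^2\baks\{0\}$; off $\Si$ one has $L=(L\ov{Z})\,\partial_{\ov{Z}}$, which expresses the hypocomplexity I exploit below. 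Observe that only nonresonance, and not the Diophantine condition $\mcaldc$, will be needed here, because for a merely $\cinf$ datum one never sums an infinite homogeneous expansion.

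First I would expand $f=\sum_{j=0}^{N}P_j+R_N$, where $P_j=r^jf_j(\ta)$ is the degree-$j$ part of the Taylor jet of $f$ at $0$ and $R_N\in\cinf(\R^2)$ vanishes to order $N+1$ at $0$, with $N$ to be chosen large. For the polynomial part, nonresonance of $(\mu,\lam)$ means $\mu(\lam-j-1)\notin\Z$ for every $j\ge0$ (this is the failure of resonance with resonant integer $l=j+1$), so $(5.2)$ holds for each $P_j$ and Theorem 5.1 supplies the homogeneous solution $u_j=r^{j+1-\lam}v_j(\ta)$, with $v_j$ the $2\pi$-periodic solution of $(5.5)$. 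As the sum is finite, no convergence estimate and hence no condition $\mcaldc$ is required; putting $w_P=\sum_{j=0}^{N}v_j(\ta)\,r^{j}$ gives $u_P=w_P\,r^{1-\lam}$ solving $Lu_P=\sum_{j\le N}P_j$ in $\distr(\R^2)$, with $w_P$ smooth on the closed cylinder.

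The substantive step is to solve $Lu_R=R_N$ with $u_R$ continuous and of the form $w_R\,r^{1-\lam}$. Off $\Si$ the field $L$ is elliptic, while at each $p_0\in\Si\baks\{0\}$ condition $\condp$ yields a local $\cinf$ solution of $L\,\cdot\,=R_N$ near $p_0$ (the solvability quoted in Section 1). On a locally finite cover $\{U_\alpha\}$ of the punctured disc I would take such local solutions $u_\alpha$; on overlaps $g_{\alpha\beta}=u_\alpha-u_\beta$ satisfies $Lg_{\alpha\beta}=0$, so by Theorem 3.1 one has $g_{\alpha\beta}=H_{\alpha\beta}\circ Z$ with $H_{\alpha\beta}$ holomorphic on the open set $Z(U_\alpha\cap U_\beta)\subset\C$. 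Thus $\{H_{\alpha\beta}\}$ is a holomorphic $1$-cocycle on the open subset $Z(D(0,\ep)\baks\{0\})$ of $\C$, an open (noncompact) Riemann surface; its first cohomology with coefficients in $\mathcal{O}$ vanishes, so $H_{\alpha\beta}=H_\alpha-H_\beta$ with $H_\alpha$ holomorphic. Setting $h_\alpha=H_\alpha\circ Z$, a solution of $Lh_\alpha=0$, the corrected pieces $u_\alpha-h_\alpha$ agree on overlaps and glue to a global $u_R$ with $Lu_R=R_N$ on $D(0,\ep)\baks\{0\}$, which is $\cinf$ there by hypocomplexity (Remark 1.1).

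I expect the main obstacle to be the origin. The splitting $H_\alpha$ is determined only up to holomorphic functions and may a priori be singular as $Z\to0$; it must be chosen so that $u_R$ extends continuously across $0$ with the flatness dictated by $R_N$. I would do this by solving on the dyadic annuli $r\in[2^{-n-1}\ep,2^{-n}\ep]$ with the uniform local estimates provided by $\condp$, splitting the cocycle annulus by annulus and summing; since $R_N=O(r^{N+1})$ and $L$ lowers homogeneity by $\lam-1$, the pieces are $O(r^{\,N+2-\re{\lam}})$, a series converging for $N$ large and forcing $u_R=O(r^{\,N+2-\re{\lam}})$, so $w_R=r^{\lam-1}u_R=O(r^{N+1})$ extends continuously to $r=0$ with $w_R(0,\ta)=0$. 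The two delicate points, both governed by $\condp$ (equivalently by the finite even order of vanishing of $\re{q\ov{p}}$ from Remark 1.2), are that the local solutions near $\Si$ admit uniform estimates and that the glued $u_R$ be smooth across the characteristic rays. Finally $u=u_P+u_R=w\,r^{1-\lam}$ with $w=w_P+w_R$ smooth on $(0,\ep)\times\cir$ and continuous up to $r=0$, its trace at $r=0$ coming from the lowest mode $v_0$, and $Lu=\sum_{j\le N}P_j+R_N=f$ in $D(0,\ep)$ in the sense of distributions, which is the assertion.
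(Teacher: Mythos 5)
Your first half is sound and essentially the paper's: nonresonance is exactly what makes each homogeneous equation $Lu_j=P_j$ solvable by Theorem 5.1, and you are right that no Diophantine condition enters. The only difference is that the paper solves the \emph{entire} Taylor series termwise and realizes the coefficients $v_j$ as the $r$-Taylor coefficients of a single smooth function $v$ via Borel's extension theorem, so that its remainder $g=f-L(v/r^{\lam-1})$ is flat at the origin, whereas you keep a finite jet and a remainder vanishing to finite order $N+1$. Either choice could serve, provided the remainder equation is then solved \emph{with control at the origin}.

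That control is where your argument has a genuine gap. The Behnke--Stein vanishing of $H^1(\mathcal{O})$ on the punctured image $Z_\mu(D(0,\ep)\baks\{0\})$ produces a splitting $H_{\alpha\beta}=H_\alpha-H_\beta$ with no control whatsoever on the $H_\alpha$ near the puncture: they may grow like $\ei{1/z}$ as $z\to 0$, so the glued $u_R$ need not be $O(r^{N+2-\re{\lam}})$, need not define a distribution across $0$, and $w_R=r^{\lam-1}u_R$ need not extend continuously to $r=0$. You correctly flag this as the main obstacle, but the dyadic-annulus repair you sketch is precisely the analytic content of the theorem and is not carried out: it requires (i) solvability with \emph{uniform} estimates on a fixed annulus, which must be extracted from $\condp$ together with the $\lam$-homogeneity of $L$ by rescaling (under $x\mapsto sx$ the operator picks up the factor $s^{1-\lam}$; you never set this up, and $\condp$ by itself only gives solvability on neighborhoods of unspecified size), and (ii) a quantitative Cousin splitting --- Laurent-decomposing each transition function on the image annuli and telescoping with bounds --- to show the corrections preserve the decay. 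Note also that your gluing scheme is, almost verbatim, how the paper proves Theorem 8.2 on $\ov{\Om}\baks D(0,\ep)$, i.e.\ away from the singular point, and that proof invokes Theorem 7.1 to handle the origin first; using the same scheme to prove Theorem 7.1 itself begs exactly the question of behavior at $0$. The paper's actual device is explicit rather than cohomological: with the remainder flat, it glues the local $\condp$-solutions near the characteristic rays by cutoffs (pushing the error into the right-hand side, which then vanishes near $\Si_0$), and pushes the resulting equation forward by the global first integral $Z_\mu$ to a $\ov{\pa}$-equation $\pa\widehat{w}/\pa\ov{z}=\abs{z}^{a-1}\widetilde{h}$ with right-hand side in $L^p$, $p>2$, solved by the Cauchy transform with a H\"{o}lder-continuous $\widehat{w}$ vanishing at $0$. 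That integral operator is what supplies the decay at the origin that the abstract splitting cannot.
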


\begin{proof}
Let $\sum_{j=0}^\infty P_j(x,y)$ be the Taylor series of $f$ at $0\in\R^2$, where
$P_j=r^jf_j(\ta)$ is the homogeneous polynomial given by (6.12). Since the pair $(\mu,\lam)$
is nonresonant, then for every $j\ge 0$, we can find $v_j\in\cinf(\cir)$ such that $r^jv_j(\ta)/r^{\lam-1}$
is a distribution solution of the equation $Lu=P_j$.
By using Borel's Extension Theorem, we can find a function
$v(r,\ta)\in\cinf([0, \ \ep)\times\cir)$, for some $\ep>0$, such that
\begeq
\dd{\,^jv}{r^j}(0,\ta)=j!v_j(\ta)\, ,\quad \forall j\ge 0\, .
\stopeq
Then the $r$-Taylor series of the function
\begeq
g(r,\ta)=f(r,\ta)-L\left(\frac{v(r,\ta)}{r^{\lam-1}}\right)
\stopeq
is identically zero. That is $\dis\dd{\,^jg}{r^j}(0,\ta)=0$ for every $j\ge 0$.
When considered as a function of $(x,y)$ in a neighborhood of $0\in\R^2$, the function
$g$ vanishes to infinite order at $0$.
Since $L$ satisfies $\condp$ in $\R^2\baks\{0\}$, then the vector field
\begeq
L_0=\pt-i\frac{q(\ta)}{p(\ta)}r\pr =\frac{L}{p(\ta)r^{\lam-1}}
\stopeq
satisfies $\condp$ in $\R\times\cir$. Let
\begeq
\Si_0=\{ \ta\in\cir :\ \re{q(\ta)\ov{p(\ta)}}=0\}\, .
\stopeq
Since $\Si_0\subset\cir$ is compact and $L_0$ satisfies $\condp$, then we can find
$\ep>0$ and $\ta_1\,,\cdots ,\ta_N\in\cir$ such that
\begeq\begar{c}
\dis\Si_0\subset \bigcup_{k=1}^N (\ta_k-\ep,\ \ta_k+\ep)\, ,\\
 \dis (\ta_k-2\ep,\ \ta_k+2\ep)\cap(\ta_l-2\ep,\ \ta_l+2\ep) =
\emptyset \ \mathrm{for}\ k\ne l\, .
\stopar\stopeq
Moreover, for each $k=1,\cdots ,N$ there exists
$w_k\in\cinf\left((-\ep,\ \ep)\times (\ta_k-2\ep,\ \ta_k+2\ep)\right)$ such that
\begeq
L_0w_k=\frac{g(r,\ta)}{p(\ta)r^{\lam-1}}\, .
\stopeq
We can furthermore assume that $w_k(0,\ta)=0$. Let $w_0\in\cinf\left((-\ep ,\ \ep)\times\cir\right)$
be such that
\begeq
w_0(r,\ta)=w_k(r,\ta)\ \ \mathrm{in}\
(-\ep,\ \ep)\times (\ta_k-\ep,\ \ta_k+\ep)\, ,\ \ \mathrm{for}\
k=1,\cdots ,N
\stopeq
Hence, $L_0w_0\in \cinf\left((-\ep,\ \ep)\times\cir\right)$ and
\begeq
L_0w_0(r,\ta)=\frac{g(r,\ta)}{p(\ta)r^{\lam-1}}\ \ \mathrm{in}\ \
\bigcup_{k=1}^N(-\ep,\ \ep)\times (\ta_k-\ep,\ \ta_k+\ep)\, .
\stopeq
Consider the equation
\begeq
L_0w^1=h(r,\ta)=\frac{g(r,\ta)}{p(\ta)r^{\lam-1}}-L_0w_0(r,\ta)\,.
\stopeq
Note that $h\equiv 0$ in $\bigcup_{k=1}^N(-\ep,\ \ep)\times (\ta_k-\ep,\ \ta_k+\ep)$.
To solve (7.10), we use the first integral $Z_\mu$ of $L_0$ defined in (2.11).
We have
\begeq
L_0\ov{Z_\mu(r,\ta)}=-\frac{2i}{\mu}\, \frac{\re{q(\ta)\ov{p(\ta)}}}{\abs{p(\ta)}^2}
\ov{Z_\mu(r,\ta)}\, .
\stopeq
It follows from (7.11) that the pushforward via $Z_\mu$ of the equation (7.10) in the region
$0\le r <\ep$ gives rise to the CR equation in a neighborhood of $0\in\C$
\begeq
-\frac{2i}{\mu}\, \frac{\re{q(\ta)\ov{p(\ta)}}}{\abs{p(\ta)}^2}\,
\ov{z}\,\dd{\widehat{w}}{\ov{z}}=\widehat{h}(z)\,,
\stopeq
with $\widehat{w}=w^1\circ Z_\mu^{-1}$ and $\widehat{h}=h\circ Z_\mu^{-1}$. Since
$h$ is identically zero in a an open neighborhood of $(-\ep,\ \ep)\times\Si_0$ and $h(0,\ta)=0$, then
the equation (7.12) can be written in the form
\begeq
\dd{\widehat{w}}{\ov{z}}=\frac{\abs{z}^a}{\abs{z}}\widetilde{h}(z)
\stopeq
with $\widetilde{h}\in\cinf(D(0,\ep')\baks\{0\})\cap L^\infty (D(0,\ep'))$ and
$a=\left(\re{1/\mu}\right)^{-1}>0$. Hence $\abs{z}^{a-1}\widetilde{h}$ is an $L^p$ function
with $p>2$. Equation (7.13) has a solution $\widehat{w}\in C^\sigma(D(0,\ep'))$ with
$\sigma =(p-2)/p$. We can assume that $\widehat{w}(0)=0$. Let
$w^1=\widehat{w}\circ Z_\mu$. The function
\begeq
w(r,\ta)=r^{\lam-1}\left(w^1(r,\ta)+w_0(r,\ta)\right) +v(r,\ta)
\stopeq
is $\cinf$ for $0<r<\ep$,  H\"{o}lder continuous on the circle $r=0$. Also, it follows
from (7.3), (7.9), and (7.10), that $u(r,\ta)=w(r,ta)/r^{\lam-1}$ is a distribution solution
of equation (7.1)
\end{proof}

Equation (7.1) can be solved in the resonant case provided  $f$ satisfies compatibility conditions.

\begin{theorem}
Suppose that $L$ satisfies condition $(\mathcal{P})$ and $(\mu,\lam)$ is resononant.
Let $f$ be a $\cinf$ function
in a neighborhood of $0\in\R^2$ with Taylor series
\begeq
\sum_{m=0}^\infty P_m(x,y)=\sum_{m=0}^\infty r^mf_m(\ta)
\stopeq
where $P_m$ and $f_m$ are given by $(6.12)$. If
\begeq
\int_0^{2\pi}\!\!\frac{f_j(\ta)}{p(\ta)}\exp\left[
-i(j+1-\lam)\psi(\ta)\right]\, d\ta =0\, ,\quad\forall j\in \mathbb{J}(\mu,\lam)
\stopeq
then equation (7.1) has a distribution solution.
\end{theorem}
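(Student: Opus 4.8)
The plan is to mirror the proof of Theorem 7.1, splitting the construction into a formal (Taylor) part and a flat part, with the resonance entering only in the formal part. First I would expand $f$ in its Taylor series $\sum_{j\ge 0}P_j$ at the origin, with $P_j=r^jf_j(\ta)$ as in (6.12), and peel off each homogeneous piece $P_j$ by a homogeneous solution of $Lu=P_j$. Since $L$ satisfies $\condp$, Lemma 2.1 guarantees $\re{\mu}>0$, so $\mu\ne 0$ and the first integral $Z_\mu$ of (2.11) is a global homeomorphism (Proposition 2.2); both facts are needed for the flat part.

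For the formal part I would produce, for each $j\ge 0$, a $2\pi$-periodic $v_j\in\cinf(\cir)$ so that $r^{j+1-\lam}v_j(\ta)$ is the homogeneous distribution solution of $Lu=P_j$ supplied by Theorem 5.1 or Theorem 5.2. At a degree $j$ with $\ei{2\pi i\mu(j+1-\lam)}\ne 1$ the defining ODE $p(\ta)v'-i(j+1-\lam)q(\ta)v=f_j$ has a unique periodic solution (Theorem 5.1). At the remaining, resonant, degrees the same ODE admits a periodic solution precisely when $\int_0^{2\pi}(f_j/p)\ei{-i(j+1-\lam)\psi}\,d\ta=0$, and this is exactly what the compatibility hypothesis (7.16) supplies; Theorem 5.2 then furnishes the desired $v_j$ (non-uniquely, which is harmless). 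Unlike the real-analytic Theorem 6.1, I would need no size control on the $v_j$: Borel's extension theorem produces a single $v(r,\ta)\in\cinf([0,\ep)\times\cir)$ with $\partial_r^j v(0,\ta)=j!\,v_j(\ta)$ regardless of how fast $\abs{v_j}$ grows, which is why $\mcaldc$ is absent from the hypotheses. By construction $g=f-L(v/r^{\lam-1})$ then has vanishing $r$-Taylor series, i.e.\ $g$ vanishes to infinite order at $0$.

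For the flat part I would repeat the argument of Theorem 7.1. Writing $L_0=\pt-i(q/p)r\pr=L/(p\,r^{\lam-1})$, which inherits $\condp$ on $\R\times\cir$, I would use local solvability along the compact characteristic set $\Si_0=\{\ta:\ \re{q\ov p}=0\}$ to patch together $w_0\in\cinf((-\ep,\ep)\times\cir)$ with $L_0w_0=g/(p\,r^{\lam-1})$ near $\Si_0$. The residual equation $L_0w^1=h$, with $h$ flat and identically zero near $\Si_0$, I would push forward through $Z_\mu$ to a CR equation $\partial_{\ov z}\widehat w=\abs{z}^{a-1}\widetilde h$ with $a=(\re{1/\mu})^{-1}>0$ and $\widetilde h$ bounded; solving this CR equation and pulling back gives $w^1$. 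Setting $w=r^{\lam-1}(w^1+w_0)+v$ and $u=w/r^{\lam-1}$ then yields the claimed distribution solution.

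The one genuinely new point, and the step I would check most carefully, is the passage through the resonant degrees: that hypothesis (7.16) matches, degree by degree, exactly the compatibility condition under which Theorem 5.2 applies, so that a periodic $v_j$ exists for \emph{every} $j$ and the Borel sum $v$ can be formed. Everything downstream — the flatness of $g$, the $\condp$-patching, and the CR solve — is insensitive to resonance and carries over unchanged from Theorem 7.1; so the proof reduces to the observation that the formal obstruction at each resonant homogeneity is killed precisely by (7.16).
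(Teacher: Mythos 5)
Your proposal is correct and is precisely the intended argument: the paper states this theorem (Theorem 7.2) without a written proof, leaving it as the evident adaptation of Theorem 7.1 in which Theorem 5.2 — a periodic $v_j$ exists at a resonant degree exactly when the integral in (7.16) vanishes, non-uniquely but harmlessly — replaces Theorem 5.1 in the formal part, followed unchanged by the Borel extension, the $\condp$ patching along $\Si_0$, and the CR equation pushed forward through $Z_\mu$; your reconstruction matches this step for step. Your remark that no Diophantine condition $\mcaldc$ is needed because Borel's theorem absorbs arbitrary growth of the $v_j$ (in contrast with the series summation in Theorem 6.1) is also exactly the right explanation for its absence from the hypotheses.
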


\section{A boundary value problem for $L$}
In this section, we consider an adaptation of the Riemann-Hilbert boundary value problem
when  the vector field $L$ satisfies $\condp$. Throughout we assume that $\Omega\subset\R^2$
is a simply connected open set containing 0 and having a $C^1$ boundary.
For simplicity, we will assume that
$i^\ast(Bdx-Ady)\ne0$, where $Bdx-Ady$ is the dual form of the vector field $L=A\px+B\py$ and
$i\,: \,\pa\Om\,\longrightarrow\,\R^2$ is the inclusion map.

\begin{theorem}
Suppose that the vector field $L$ with numbers $(\mu,\lam)$ satisfies condition $(\mathcal{P})$,
 $\Lambda\in C^\sigma (\pa\Om,\cir)$ and $\Phi\in C^\sigma (\pa\Om,\R)$ with $0<\sigma <1$.
Denote by $\kappa$  the winding number of $\Lambda$ with respect to $0$. If
\begeq
\kappa > -1-\frac{\mathrm{Re}(\lam)-1}{\mathrm{Re}(1/\mu)}\,
\stopeq
then the Riemann-Hilbert boundary value problem
\begeq\left\{\begar{ll}
Lu=0 &\quad\mathrm{in}\ \ \Om\, ,\\
\mathrm{Re}\left(\Lambda u\right) =\Phi &\quad\mathrm{on}\ \ \pa\Om\, .
\stopar\right.\stopeq
has a solution
\[
u\,\in\, \mcald (\Om)\cap\cinf(\Om\baks\{0\})\, .
\]
\end{theorem}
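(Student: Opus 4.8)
The plan is to push the problem forward through the first integral $Z_\mu$ to a classical Hilbert problem for a holomorphic function, and then to read off solvability from the index of the transferred coefficient together with the singular distribution solutions that $L$ carries at the origin. Since $L$ satisfies $\condp$, Lemma 2.1 gives $\re{\mu}>0$, so by Proposition 2.2 the first integral $Z_\mu:\R^2\longrightarrow\C$ is a global homeomorphism, of class $\cinf$ on $\R^2\baks\{0\}$, with $Z_\mu(0)=0$. I would set $\Om'=Z_\mu(\Om)$, a bounded simply connected domain whose boundary $\Gamma=Z_\mu(\pa\Om)$ is a $C^1$ Jordan curve (the curve $\pa\Om$ avoids the singular point $0$, where $Z_\mu$ is a diffeomorphism). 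By Theorem 3.3 every continuous solution of $Lu=0$ in $\Om$ has the form $u=H\circ Z_\mu$ with $H$ holomorphic on $\Om'$; to capture genuine distribution solutions that are singular at $0$, I would enlarge this class by allowing $H$ to have a pole at $0=Z_\mu(0)$ of the highest order for which $u=H\circ Z_\mu$ still solves $Lu=0$ in $\mcald(\Om)$, the admissible orders being exactly those furnished by Proposition 3.2 and Remark 2.2.

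Composition with $Z_\mu$ turns the boundary condition $\re{\Lambda u}=\Phi$ on $\pa\Om$ into $\re{\til\Lambda\,H}=\til\Phi$ on $\Gamma$, where $\til\Lambda=\Lambda\circ Z_\mu^{-1}\in C^\sigma(\Gamma,\cir)$ and $\til\Phi=\Phi\circ Z_\mu^{-1}\in C^\sigma(\Gamma,\R)$. Composing once more with a Riemann map $\Om'\longrightarrow\{\abs{z}<1\}$, which carries the H\"older data and the winding of the coefficient over to the unit circle, I reduce to a standard Hilbert problem on $\cir$ with H\"older coefficient. Because $Z_\mu$ restricts to an orientation-preserving homeomorphism $\pa\Om\longrightarrow\Gamma$, the winding number of $\til\Lambda$ along $\Gamma$ equals that of $\Lambda$ along $\pa\Om$, namely $\kappa$, so the index of the transferred problem is $\kappa$. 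I would then invoke the Muskhelishvili--Vekua theory for the Hilbert problem in H\"older classes.

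The singularity of $L$ at $0$ enters as follows. Admitting a pole of $H$ at $0$ adjoins finitely many complex parameters to the solution class and thereby raises the effective index of the Hilbert problem from $\kappa$ to $\kappa+q$, where $q$ is the maximal pole order consistent with $u=H\circ Z_\mu$ remaining a distribution solution; this maximal $q$ is controlled by $\re{\lam}$ and $\re{1/\mu}$ through Proposition 3.2. The classical theory produces a solution once the effective index is large enough, and the arithmetic of the largest admissible $q$ against the solvability threshold is precisely the inequality $\kappa>-1-(\re{\lam}-1)/\re{1/\mu}$. Pulling $H$ back, $u=H\circ Z_\mu\in\mcald(\Om)$ solves $Lu=0$ and, being the composition of a function holomorphic (meromorphic at $0$) with a map of class $\cinf$ off the origin, lies in $\cinf(\Om\baks\{0\})$; the boundary condition holds by construction.

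I expect the main obstacle to be the bookkeeping in the third step: matching the maximal admissible pole order at $0$ — equivalently the dimension of the space of homogeneous singular solutions $Z_\mu^{-m}$, together with the delta-type solutions of Proposition 3.1 that $L$ supports at the origin — against the classical solvability threshold, so that the combined index lands exactly at the stated bound. The borderline case, in which the effective index is the smallest value still admitting a solution, requires verifying that the single real compatibility condition of the negative-index Hilbert problem is satisfied; this is where the extra real parameter coming from the leading singular coefficient must be spent, and it is the point I anticipate will demand the most care. A secondary technical issue is to justify rigorously that $u=H\circ Z_\mu$, with $H$ truly singular at $0$, is a distribution solution across the origin, which rests on the homogeneity computations of Proposition 3.2 and Remark 2.2 and on the high-order vanishing of $L$ at $0$.
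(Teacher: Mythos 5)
Your overall route is the same as the paper's: push the problem forward by the first integral $Z_\mu$ (a global homeomorphism, since $\condp$ gives $\re{\mu}>0$), straighten by a Riemann map to the unit disc, solve the resulting Hilbert problem by the classical Schwarz-operator/Muskhelishvili theory, and use poles at $Z_\mu(0)=0$ to raise the effective index. The gap is in your pole budget. Write $x=(\re{\lam}-1)/\re{1/\mu}$. You take the admissible pole orders from Proposition 3.2, i.e. $m\,\re{1/\mu}<\re{\lam}-1$, so your solution class allows poles of order at most the largest integer strictly less than $x$, and your index bookkeeping therefore yields unconditional solvability only for $\kappa>-x$. The theorem asserts solvability for $\kappa>-1-x$, which admits exactly one further integer value of $\kappa$ (namely $\kappa=-1-[x]$, or $\kappa=-x$ when $x\in\Z$). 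At that value your effective index is $-1$, and, as you yourself note, the classical theory then imposes one real compatibility condition on the data. Your proposed fix --- spending ``the extra real parameter coming from the leading singular coefficient'' --- cannot work: for a Hilbert problem of negative index the homogeneous problem has only the trivial solution, so there are no free parameters left to spend, and the compatibility condition is a nontrivial linear functional of $(\Lambda,\Phi)$ that generic data violate. Nothing in your solution class helps (in particular the delta-type solutions of Proposition 3.1 are supported at the origin and never see the boundary condition); the borderline case is not a delicate verification but an honest obstruction to your class being large enough.

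The missing idea is that Proposition 3.2 is not the right bound on admissible singularities for this problem. The paper gets one more unit of pole order from the form of the pushed-forward equation: since $L$ carries the factor $r^{\lam-1}$ and $r=\abs{Z_\mu}^{1/\re{1/\mu}}$, the equation $Lu=0$ transforms under $Z_\mu$ not into $\pa v/\pa\ov{z}=0$ but into the degenerate equation $\abs{z}^{1+x}\,\pa v/\pa\ov{z}=0$ (equation (8.3)), and this weight absorbs the delta-derivative terms produced by $\pa(z^{-m})/\pa\ov{z}$, so that $v=H(z)/z^m$ is admissible for every integer $m<1+x$ --- the largest such $m$ exceeds your bound by exactly one. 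With that budget the effective index is $\kappa+1+[x]\ge 0$ whenever (8.1) holds, the negative-index case never arises, and the explicit formula (8.6)--(8.7) with $n=\kappa+1+[x]$ produces a solution for arbitrary H\"{o}lder data with no compatibility condition; in the extreme case the solution $z^{\kappa}\ei{i\gamma(z)}\bigl[S(z)+i\beta_0\bigr]$ simply has a pole of order $-\kappa<1+x$ at the origin. A secondary point you gloss over: to transfer the H\"{o}lder classes and the winding number you need $Z_\mu$ restricted to $\pa\Om$ to be a $C^1$ diffeomorphism onto its image, which is exactly where the standing assumption $i^\ast(B\,dx-A\,dy)\ne 0$ of Section 8 enters; a homeomorphism alone does not preserve $C^\sigma$ regularity of the composed data.
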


\begin{proof}
We will use the first integral $Z_\mu$ to convert problem (8.2) into the standard Riemann-Hilbert problem
for the $\ov{\pa}$ in $\C$. We have
\[
L\ov{Z_\mu}=-\frac{2i\mathrm{Re}(p\ov{q})}{\ov{\mu}\,\ov{p}}r^{\lam-1}\ov{Z_\mu}\, ,
\]
$r=\abs{Z_\mu}^{1/\mathrm{Re}(1/\mu)}$, and $L$ is hypocomplex in $\R^2\baks\{0\}$.
It follows that the equation $Lu=0$ in $\Om$ is transformed via
$Z_\mu$ into the equation
\begeq
\abs{z}^{1+\mathrm{Re}(\lam-1)/\mathrm{Re}(1/\mu)}\,\dd{ v}{\ov{z}} =0\, .
\stopeq
with $u=v\circ Z_\mu$. Hence, the boundary value problem (8.2) transforms into
\begeq\left\{\begar{ll}
\dis \abs{z}^{1+\mathrm{Re}(\lam-1)/\mathrm{Re}(1/\mu)}\,\dd{ v}{\ov{z}} =0 &\quad\mathrm{in}\ Z_\mu(\Om)\\
\mathrm{Re}\left(\Lambda_1 v\right) =\Phi_1 &\quad \mathrm{on}\ \pa Z_\mu (\Om)\, ,
\stopar\right.\stopeq
with $\Lambda_1=\Lambda\circ Z_\mu^{-1}$ and $\Phi_1=\Phi\circ Z_\mu^{-1}$.
Note that the presence of the term $\abs{z}^{1+\mathrm{Re}(\lam-1)/\mathrm{Re}(1/\mu)}$ allows us to
seek solutions of the form
\[
v(z)=\frac{H(z)}{z^m}
\]
with $H$ holomorphic and $m\in\Z^+$ satisfying
\[
m<1+\frac{\mathrm{Re}(\lam-1)}{\mathrm{Re}(1/\mu)}\, .
\]
We can get explicit solutions by reducing the problem to the unit disc. Let
\[
\vartheta\, :\, Z_\mu(\Om)\,\longrightarrow\, D(0,1)
\]
be a conformal mapping with $\vartheta (0)=0$. The boundary problem for $w=v\circ\vartheta^{-1}$ is
therefore
\begeq\left\{\begar{ll}
\dis \abs{z}^{1+\mathrm{Re}(\lam-1)/\mathrm{Re}(1/\mu)}\,\dd{ w}{\ov{z}} =0 &\quad\mathrm{in}\ D(0,1)\\
\mathrm{Re}\left(\Lambda_2 w\right) =\Phi_2 &\quad \mathrm{on}\ \pa D(0,1)\, ,
\stopar\right.\stopeq
with $\Lambda_2=\Lambda_1\circ \vartheta^{-1}$ and $\Phi_2=\Phi_1\circ \vartheta^{-1}$.
Since on the boundary $\pa\Om$ we have $i^\ast dZ_\mu\ne 0$, then $Z_\mu$ is a $C^1$-diffeomorphism from
$\pa\Om$ onto $\pa Z_\mu(\Om)$ and, consequently, $\vartheta\circ Z_\mu$ is a diffeomorphism from $\pa\Om$ onto
the unit circle $\pa D(0,1)$. Hence, the functions $\Lambda_2$ and $\Phi_2$ are H\"{o}lder continuous on the unit
circle.

The explicit solution of (8.5) can be obtained through the Schwarz operator as follows (see {\cite{Beg}},
{\cite{Gak}}, {\cite{Mus}}). Let
\begeq\begar{l}
\dis \gamma(z)=
\frac{1}{2\pi}\int_0^{2\pi}\!\!
\left[ \arctan\left(\frac{\mathrm{Im}(\Lambda_2(\tau))}{\mathrm{Re}(\Lambda_2(\tau))}\right)-\kappa\tau\right]
\,\frac{\ei{i\tau}+z}{\ei{i\tau}-z}\, d\tau\\
\gamma(z)=\gamma_1(z)+i\gamma_2(z)\\
\dis Q(z)=i\beta_0+\sum_{k=1}^n\left(c_kz^k-\ov{c_k}\,z^{-k}\right)\,\quad \beta_0\in\R,\ \ c_1,\cdots ,c_n\,\in \C\\
n=\kappa +1 +\left[\frac{\mathrm{Re}(\lam)-1}{\mathrm{Re}(1/\mu)}\right]\, .
\stopar\stopeq
The general solution of (8.5) is the meromorphic function with a possible pole at 0 given by
\begeq
w(z)=z^\kappa \ei{i\gamma(z)}\left[ \frac{1}{2\pi}\int_0^{2\pi}\!\!\ei{i\gamma_2(\tau)}
\Phi_2(\tau)\frac{\ei{i\tau}+z}{\ei{i\tau}-z}\, d\tau\, +\, Q(z)
\right]\stopeq
The distribution
$\dis u=w\circ\vartheta\circ Z_m \in \mcald (\Om)\cap\cinf(\Om\baks\{0\})$ solves (8.2)
\end{proof}

\begin{theorem}
Assume that the vector field $L$ satisfies condition $(\mathcal{P})$ and that the associated
pair $(\mu,\lam)$ is nonresonant and satisfies the diophantine condition $\mcaldc$. Then, for
every $f\in\cinf(\ov{\Om})$ there exists $u\in\mcald (\Om)\cap\cinf(\ov{\Om}\baks\{0\})$ such
that
\begeq
Lu=f\, .
\stopeq
\end{theorem}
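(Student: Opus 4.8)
The plan is to solve the equation near the singular point by Theorem 7.1, to peel off that local solution with a cutoff so that the remaining source vanishes near $0$, and then to solve the reduced equation on all of $\Om$ by combining the $\condp$-solvability along the characteristic set with the first integral $Z_\mu$. Since $L$ satisfies $\condp$, Lemma 2.1 gives $\re{\mu}>0$, so by Proposition 2.2 the first integral $Z_\mu:\R^2\to\C$ is a global homeomorphism, a $\cinf$ diffeomorphism on $\R^2\baks\{0\}$, and (by the standing assumption $i^\ast(Bdx-Ady)\ne0$) a diffeomorphism up to $\pa\Om$; in particular $Z_\mu(\Om)$ is a bounded domain containing $0$. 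Extending $f$ to a function in $\cinf(\R^2)$ and applying Theorem 7.1, there is an $\ep>0$ and a distribution $u_0\in\mcald(D(0,\ep))$ with $Lu_0=f$ in $D(0,\ep)$ and $u_0\in\cinf(D(0,\ep)\baks\{0\})$.

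First I would localize. Fix $\chi\in\ccinf(D(0,\ep))$ with $\chi\equiv1$ on $D(0,\ep/2)$, so that $\chi u_0$ is a compactly supported distribution on $\R^2$. By the Leibniz rule $L(\chi u_0)=\chi\,Lu_0+(L\chi)u_0=\chi f+(L\chi)u_0$, and since $d\chi$ is supported in the annulus $\ep/2\le\abs{(x,y)}\le\ep$, where $u_0$ is smooth, the function $(L\chi)u_0$ is smooth; hence $L(\chi u_0)\in\ccinf(\R^2)$. Setting $g:=f-L(\chi u_0)=(1-\chi)f-(L\chi)u_0$, we obtain $g\in\cinf(\ov\Om)$ with $g\equiv0$ on $D(0,\ep/2)$. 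It now suffices to produce $u_1\in\cinf(\ov\Om\baks\{0\})$ with $Lu_1=g$ on $\Om$: then $u:=\chi u_0+u_1$ satisfies $Lu=f$ and lies in $\mcald(\Om)\cap\cinf(\ov\Om\baks\{0\})$.

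To solve $Lu_1=g$ I would reproduce, now globally on $\Om$, the two-step mechanism of the proof of Theorem 7.1. Because $g$ vanishes near $0$, only the characteristic set $\Si\cap(\ov\Om\baks\{0\})$ is an obstacle. Using $\condp$, $L$ is smoothly solvable near each characteristic ray; gluing these local solutions exactly as in (7.6)--(7.9) yields $W_0\in\cinf(\ov\Om\baks\{0\})$, vanishing near $0$, with $LW_0=g$ in a neighborhood of $\Si\cap(\ov\Om\baks\{0\})$. The residual $h:=g-LW_0\in\cinf(\ov\Om)$ then vanishes near $\Si$ and near $0$, so $\supp h$ lies in the region where $L$ is elliptic and $r$ is bounded below. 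Now push forward by $Z_\mu$: since $LZ_\mu=0$ and $L\ov{Z_\mu}$ is the product of $\ov{Z_\mu}$ with a factor proportional to $\re{q\ov{p}}\,r^{\lam-1}$, the operator $L$ transforms into $c(z)\,\ov z\,\pa_{\ov z}$, where $c$ vanishes only on $Z_\mu(\Si)$ and at $0$. On $Z_\mu(\supp h)$, which avoids both, $c$ is smooth and nonvanishing, so $LW_1=h$ becomes the inhomogeneous Cauchy--Riemann equation $\pa_{\ov z}v=\til h$ on $Z_\mu(\Om)$, where $\til h:=\bigl(h/(c\,\ov z)\bigr)\circ Z_\mu^{-1}$ is smooth on $Z_\mu(\ov\Om)$. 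Extending $\til h$ to a function $H\in\ccinf(\C)$ and applying the Cauchy transform $v(z)=\frac{1}{\pi}\int\!\!\int_{\C}\frac{H(\zeta)}{z-\zeta}\,dA(\zeta)$ gives $v\in\cinf(\C)$ with $\pa_{\ov z}v=\til h$ on $Z_\mu(\Om)$. Setting $W_1:=v\circ Z_\mu$, the function $u_1:=W_0+W_1$ lies in $\cinf(\ov\Om\baks\{0\})$ and solves $Lu_1=g$; near $0$ the relation holds in the distribution sense because there $h\equiv0$, so $v$ is holomorphic and $W_1$ is a holomorphic function of the first integral, whence $LW_1=0$.

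The main obstacle is precisely the characteristic set away from the origin. One cannot solve $Lu_1=g$ merely by transporting to a $\pa_{\ov z}$-equation, because the transported operator $c(z)\,\ov z\,\pa_{\ov z}$ degenerates on $Z_\mu(\Si)$; this is why $\condp$ enters twice, first to construct $W_0$ and strip off the characteristic part, and then (through Proposition 2.2) to guarantee that $Z_\mu$ is a global homeomorphism, so that solving the residual \emph{elliptic} equation on the bounded domain $Z_\mu(\Om)$ and pulling back produces a single-valued solution on all of $\Om$. With $u_1$ in hand, $u=\chi u_0+u_1$ is the required distribution: it solves $Lu=L(\chi u_0)+g=f$, while $\chi u_0$ is smooth off $0$ and $u_1\in\cinf(\ov\Om\baks\{0\})$, so $u\in\mcald(\Om)\cap\cinf(\ov\Om\baks\{0\})$.
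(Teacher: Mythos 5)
Your first step (peeling off the Theorem 7.1 solution with a cutoff, so that $g=f-L(\chi u_0)$ is smooth on $\ov\Om$ and vanishes on $D(0,\ep/2)$) is correct, and your overall plan is a genuinely different route from the paper's (the paper takes pointwise-local solutions $u_j$ on a finite cover, writes $u_j-u_k=h_{jk}\circ Z_\mu$ by hypocomplexity, and splits the holomorphic cocycle $\{h_{jk}\}$). But there is a genuine gap in your construction of $W_0$: you need \emph{simultaneously} (i) $LW_0=g$ on a full neighborhood of $\Si\cap(\ov\Om\baks\{0\})$ and (ii) $W_0\equiv 0$ near $0$, and no gluing ``as in (7.6)--(7.9)'' produces both. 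If you solve $Lw_k=g$ on a rectangle $[0,\,R)\times(\ta_k-2\ep,\ \ta_k+2\ep)$ reaching down to $r=0$, then on $\{0<r<\ep/2\}$ (where $g\equiv0$) the function $w_k$ solves the homogeneous equation, hence $w_k=H_k\circ Z_\mu$ with $H_k$ holomorphic on the sector-like image $Z_\mu(\{0<r<\ep/2\}\times(\ta_k-2\ep,\ \ta_k+2\ep))$; it has no reason to vanish, and it cannot be subtracted off globally unless $H_k$ continues analytically to the image of the whole rectangle. If instead you force (ii) by cutting $w_k$ off in $r$, the error $(L\chi_1)w_k$ is supported in an annulus that \emph{crosses the characteristic ray}, so (i) fails there. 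Either way, the residual $h=g-LW_0$ fails to vanish on a neighborhood of $(\Si\cap\ov\Om)\cup\{0\}$, which is exactly the zero set of the factor $c\,\ov{Z_\mu}$ you must divide by. The family $\{H_k\}$ is precisely the cocycle obstruction that the paper's proof confronts and splits; your cutoff scheme does not remove it, it only relocates it to the origin or to an annulus on $\Si$.

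Downstream, the claims that collapse are: ``$\supp h$ lies in the region where $L$ is elliptic and $r$ is bounded below,'' ``$\til h$ is smooth on $Z_\mu(\ov\Om)$,'' ``$v\in\cinf(\C)$,'' and the final verification ``near $0$, $v$ is holomorphic, whence $LW_1=0$.'' In the best arrangement (rectangles down to $r=0$, with $w_k$ normalized so $w_k(0,\ta)=0$) the transported right-hand side behaves like $\abs{z}^{a-1}$ near $z=0$, as in (7.13), and is only in $L^p$, $p>2$; so you would have to rerun the degenerate CR / H\"older analysis of (7.12)--(7.13) of Theorem 7.1 on the larger domain rather than invoke a smooth compactly supported Cauchy transform. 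A secondary, smaller issue: your assertion that $\condp$ gives smooth solvability on a neighborhood of an \emph{entire} characteristic ray segment is a semiglobal statement that does not follow from the pointwise local solvability quoted in Section 1; it is true for these vector fields (Fourier transform in $\ln r$, since $q/p$ is independent of $r$), but it requires proof, whereas the paper's argument needs only local solvability plus the cocycle splitting. So as written the proposal has a real gap; the repair is either to carry out the Theorem 7.1-type analysis near $0$, or to fall back on the paper's Cousin-type argument.
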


\begin{proof}
Since $L$ satisfies $\condp$ and $(\mu,\lam)$ is non resonant and satisfies $\mcaldc$, then
(Theorem 7.1) there exists $u\in\mcald (U_0)\cap\cinf(U_0\baks\{0\})$, with
$U_0=D(0,\ep)$, for some $\ep>0$, such that $u$ satisfies (8.8) in $U_0$.
It follows from $\condp$ that we can find open sets $U_1,\cdots ,U_N$ such that
\[
\ov{\Om}\baks D(0,\ep)\,\subset\, \bigcup_{j=1}^NU_j
\]
and functions $u_j\in\cinf(U_j)$ satisfying (8.8) in $U_j$ for $j=1,\cdots ,N$.
If $j,k\in\{0,\cdots ,N\}$ are such that $U_j\cap U_k\ne \emptyset$, then $L(u_j-u_k)=0$
in $U_j\cap U_k$ and, consequently, there exist $h_{jk}$ holomorphic in
$Z_\mu(U_j\cap U_k)=Z_\mu(U_j)\cap Z_\mu(U_k)$ such that $u_j-u_k=h_{jk}\circ Z_\mu$
(hypocomplexity of $L$). The collection $\{ h_{jk}\}$ forms a cocycle relative to the covering
$\{Z_\mu(U_j)\}_{j=0}^N$. Hence, we can find holomorphic functions $h_j$ in $Z_\mu(U_j)$ such
that $h_{jk}=h_j-h_k$ in $Z_\mu(U_j)\cap Z_\mu(U_k)$. The distribution $u\in\mcald(\Om)$
given by
\[
u=u_j-h_j\circ Z_\mu\quad\mathrm{in}\ \ U_j
\]
solves (8.8)
\end{proof}

\begin{theorem}
Let $L$ and $(\mu,\lam)$ be as in Theorem $(8.2)$ and $\Lambda$, $\Phi$ be as in Theorem $(8.1)$
with $\kappa =\mathrm{Ind}(\Lambda)$ satisfying $(8.1)$. Then for any $f\in\cinf(\ov{\Om})$,
the Riemann-Hilbert problem
\begeq\left\{\begar{ll}
Lu =f &\quad \mathrm{in}\ \ \Om\\
\mathrm{Re}\left(\Lambda u\right) =\Phi &\quad\mathrm{on}\ \ \pa\Om
\stopar\right.\stopeq
has a solution $u\in\mcald(\Om)\cap\cinf(\Om\baks\{0\})$.
\end{theorem}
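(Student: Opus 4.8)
The plan is to solve (8.10) by superposition, combining the inhomogeneous solvability of Theorem 8.2 with the homogeneous Riemann--Hilbert solvability of Theorem 8.1. The idea is to first kill the right-hand side $f$ with a particular solution that is smooth up to the boundary away from the origin, and then correct the resulting boundary values by solving a \emph{homogeneous} Riemann--Hilbert problem with the same coefficient $\Lambda$.

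First I would invoke Theorem 8.2. Since $L$ satisfies $\condp$ and the pair $(\mu,\lam)$ is nonresonant and satisfies $\mcaldc$, for the given $f\in\cinf(\ov{\Om})$ there exists a particular solution $u_0\in\mcald(\Om)\cap\cinf(\ov{\Om}\baks\{0\})$ with $Lu_0=f$ in $\Om$. Because $0\in\Om$ is an interior point, $u_0$ is $\cinf$ in a full one-sided neighborhood of the boundary $\pa\Om$ inside $\ov{\Om}$; in particular its trace $u_0|_{\pa\Om}$ is a well-defined $\cinf$ function on $\pa\Om$. Consequently the function
\[
\Psi\,=\,\Phi-\re{\Lambda\, u_0}\big|_{\pa\Om}
\]
is $C^\sigma$ and $\R$-valued on $\pa\Om$: it is the difference of $\Phi\in C^\sigma(\pa\Om,\R)$ and the real part of the product of the $C^\sigma$ map $\Lambda$ with the smooth trace $u_0|_{\pa\Om}$.

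Next I would apply Theorem 8.1 with the boundary datum $\Psi$ in place of $\Phi$. All hypotheses are met: $L$ satisfies $\condp$, $\Lambda\in C^\sigma(\pa\Om,\cir)$, $\Psi\in C^\sigma(\pa\Om,\R)$, and the index $\kappa=\mathrm{Ind}(\Lambda)$ satisfies (8.1) by assumption. Hence there is $v\in\mcald(\Om)\cap\cinf(\Om\baks\{0\})$ solving
\[
Lv=0\ \ \mathrm{in}\ \Om,\qquad \re{\Lambda v}=\Psi\ \ \mathrm{on}\ \pa\Om.
\]
Setting $u=u_0+v$, linearity of $L$ and of $\re{\Lambda\,\cdot}$ gives $Lu=f+0=f$ in $\Om$, while on $\pa\Om$ we have $\re{\Lambda u}=\re{\Lambda u_0}+\Psi=\Phi$. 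Since both $u_0$ and $v$ lie in $\mcald(\Om)\cap\cinf(\Om\baks\{0\})$, so does $u$, which is the desired solution.

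The argument is essentially a routine superposition, so there is no deep obstacle once Theorems 8.1 and 8.2 are in hand; the only point requiring care is the matching of regularity classes. Specifically, one must confirm that subtracting the trace of the particular solution does not degrade the boundary datum below $C^\sigma$. This is exactly where the smoothness of $u_0$ near $\pa\Om$ --- guaranteed by $0$ being interior to $\Om$ together with the $\cinf(\ov{\Om}\baks\{0\})$ regularity furnished by Theorem 8.2 --- is essential, since a particular solution merely continuous up to the boundary could produce a datum $\Psi$ outside the Hölder class to which Theorem 8.1 applies.
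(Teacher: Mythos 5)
Your proposal is correct and is essentially identical to the paper's own proof: the paper likewise takes a particular solution $v$ from Theorem 8.2, solves the homogeneous Riemann--Hilbert problem with corrected datum $\Phi-\mathrm{Re}(\Lambda v)$ via Theorem 8.1, and adds the two. Your additional remark on why the corrected boundary datum stays in $C^\sigma$ (because the particular solution is $\cinf$ up to $\pa\Om$ away from the origin) is a point the paper leaves implicit, but the argument is the same superposition.
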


\begin{proof}
Let $v\in\mcald(\Om)\cap\cinf(\ov{\Om}\baks\{0\})$ be such that $Lv=f$ (Theorem 8.2).
Let $w$ be the solution of the problem (Theorem 8.1)
\[
Lw=0\ \ \mathrm{in}\ \Om\, ,\quad \mathrm{Re}(\Lambda w)=\Phi -\mathrm{Re}(\Lambda v)\ \
\mathrm{on}\ \pa\Om\, .
\]
Then $u=w+v$ solves (8.9)
\end{proof}

\bibliographystyle{amsplain}

\end{document}